\definecolor{grey}{rgb}{0.6,0.6,0.6}
\definecolor{orange}{rgb}{1.0,0.5,0.5}
\definecolor{brown}{rgb}{0.5,0.25,0.0}
\definecolor{pink}{rgb}{1.0,0.5,0.5}
\definecolor{black}{rgb}{0,0,0}
\definecolor{LightCyan}{rgb}{0.26,0.41,0.88}
\definecolor{Gris}{rgb}{0.83,0.83,0.83}
\definecolor{lightblue}{rgb}{0.55, 0.65, 0.83}
\definecolor{blue}{rgb}{0.25, 0.25, 0.85}
\definecolor{lightgrey}{rgb}{0.74, 0.83, 0.9}
\newtheorem*{theorem*}{Theorem}
\declaretheorem[style=definition,qed=$\blacksquare$,numberwithin=section]{definition}
\declaretheorem[style=definition,qed=,sibling=definition]{remark}
\declaretheorem[style=definition,qed=$\blacksquare$, sibling = definition]{lemma-definition}
\declaretheorem[style=theorem, sibling = definition]{theorem}
\declaretheorem[style=theorem, sibling = definition]{lemma}
\declaretheorem[style=theorem, sibling = definition]{proposition}
\declaretheorem[style=theorem, sibling = definition]{claim}
\declaretheorem[style=theorem, sibling = definition]{question}
\newcommand{\N}{\mathbb{N}}
\newcommand{\NN}{\mathbb{N}}
\newcommand{\Lm}{\mathcal{L}}
\newcommand{\cL}{\Lm}
\newcommand{\cO}{\mathcal{O}}
\newcommand{\ord}{\mathrm{ord}}
\newcommand{\order}{\ord \,}
\newcommand{\Lring}{\Lm_{\text{ring}}}
\newcommand{\Lpres}{\Lm_{\text{Pres}}}
\newcommand{\ac}{\text{ac}\,}
\newcommand{\acm}{\mathrm{ac}_{m}\,}
\newcommand{\Int}{\text{Int}}
\newcommand{\Cen}{\text{Centers}}
\title{Clustered cell decomposition in $P$-minimal structures}
\author{Saskia Chambille}
\author{Pablo Cubides Kovacsics }
\author{Eva Leenknegt}
\address{Saskia Chambille, KU Leuven, Department of Mathematics, Celestijnenlaan 200B, 3001 Heverlee, Belgium }
\email{saskia.chambille@kuleuven.be}
\address{Pablo Cubides Kovacsics, Universit\'e de Caen, Laboratoire de math\'ematiques Nicolas Oresme, CNRS UMR 6139,
14032 Caen cedex, France }
\email{pablo.cubides@unicaen.fr}
\address{Eva Leenknegt, KU Leuven, Department of Mathematics, Celestijnenlaan 200B, 3001 Heverlee, Belgium }
\email{eva.leenknegt@kuleuven.be}
\begin{document}


%
%
%


\maketitle

\begin{abstract}
We prove that in a $P$-minimal structure, every definable set can be partitioned as a finite union of classical cells and regular clustered cells. 
This is a generalization of previously known cell decomposition results by Denef and Mourgues, which were dependent on the existence of definable Skolem functions. Clustered cells have the same geometric structure as classical, Denef-type cells, but do not have a definable function as center. Instead, the center is given by a definable set whose fibers are finite unions of balls. 
\end{abstract}


\normalem
\section{Introduction}

The aim of this paper is to present an unconditional description of definable sets in $P$-minimal structures,  in the spirit of Denef's work on cell decomposition for semi-algebraic sets \cite{denef-86}.
More precisely, we intend to show the following (for a formal and more detailed statement we refer to Theorem \ref{thm:celldecomposition}): 

\begin{theorem*} Every definable set $X\subset S\times K$ can be partitioned as a finite union of classical cells and regular clustered cells.\end{theorem*}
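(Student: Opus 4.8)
\emph{Strategy.} I would argue by induction on $\dim S$. When $\dim S = 0$ the set $S$ is finite, every fiber $X_s\subseteq K$ is then a finite union of points and balls by $P$-minimality, and the decomposition consists of classical cells only; so assume $d := \dim S > 0$ and that the theorem holds over all parameter spaces of dimension $< d$. Since a definable subset of $S$ of dimension $< d$ can always be split off and handled by the inductive hypothesis, I will pass freely to finite definable partitions of $S$, discarding lower-dimensional parts.

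\emph{Reduction to uniform fibers.} For each $s$, the fiber $X_s$ is a finite union of points and (twisted) balls of uniformly bounded complexity, by uniform finiteness in $P$-minimal structures. Hence $S$ partitions definably into pieces over each of which the fibers have a fixed shape: a fixed number of isolated points and, for each relevant coset type $(\lambda,n)$, a fixed number of pairwise disjoint maximal balls of that type with a definable common radius $\gamma_s$. Treating the isolated-point stratum and each ball stratum separately, we are reduced to two model situations: (i) $X$ with finite fibers of constant cardinality $m$; and (ii) $X_s$ a disjoint union of exactly $m$ balls of fixed coset type and definable radius $\gamma_s$.

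\emph{The choice dichotomy.} In both cases form the definable ``set of potential centers'' $C \subseteq S\times K$: in (i), $C = X$; in (ii), $C_s$ is the union of the $m$ balls constituting $X_s$, hence a finite union of balls of radius $\gamma_s$. If $C$ admits a definable section, i.e. a definable function $\sigma$ with $\sigma(s)\in C_s$ for all $s$, then the piece of $X$ built around $\sigma$ is a classical Denef cell, the remainder has one fewer point or ball per fiber, and an inner induction on $m$ yields a decomposition into classical cells. The entire difficulty is concentrated in the opposite case, where no definable section exists: this is exactly the obstruction that the Denef--Mourgues approach sidesteps by assuming definable Skolem functions.

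\emph{Regularizing a sectionless cluster.} It remains to show that a sectionless definable family $\{C_s\}$ of finite unions of balls is, after a finite definable partition of $S$ (discarding a part of dimension $< d$), the center of a \emph{regular} clustered cell. The plan is to pass to the finite set of centers of the maximal balls of $C_s$, partition so that its cardinality is constant, and consider the smallest ball $B_s$ containing $C_s$ (both $B_s$, as a set, and its radius $\delta_s$ are definable in $s$). One then analyses how these centers are distributed inside $B_s$, using the residue field and value group, via a secondary induction on the number of ``levels'' between $\delta_s$ and $\gamma_s$: at each stage one splits off the locus where the configuration is not homogeneous, which either drops $\dim S$ or strictly reduces the remaining configuration and so is absorbed by one of the inductions, until what is left is a family exhibiting, inside $B_s$, the prescribed homogeneous residue-type pattern required by the definition of a regular clustered center --- whereupon the corresponding restriction of $X$ is a regular clustered cell by construction. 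I expect this last step to be the crux: converting a featureless failure of definable choice into the rigid combinatorics of a regular clustered center while keeping all induction parameters ($\dim S$, number of balls, number of levels) genuinely decreasing, which is precisely where the paper's careful definitions of clustered and regular clustered cells have to carry the weight.
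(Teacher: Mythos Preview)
Your reduction to the two ``model situations'' is where the argument collapses. A fiber $X_s$ of a $P$-minimal definable set is not in general a finite union of points and finitely many balls of a single definable radius: a single Denef cell $\{t : \alpha(s) < \ord(t-c) < \beta(s),\ t-c \in \lambda Q_{n,m}\}$ is a disjoint union of \emph{leaves} at every admissible height in $(\alpha(s),\beta(s))$, and this interval may be unbounded in $\Gamma_K$. So ``a fixed number of pairwise disjoint maximal balls of that type with a definable common radius $\gamma_s$'' is simply unavailable, and situation~(ii) does not cover the general case. What must be tracked is not a family of balls but a family of \emph{cell conditions} (each specifying an interval of heights and a coset class) together with a definable set of potential centers for each --- this is precisely the multi-cell/cell array machinery of Section~3, which your proposal bypasses.

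The paper proceeds not by induction on $\dim S$ but by refining a prior unconditional multi-cell decomposition (Theorem~\ref{thm:cellpartition}) into \emph{cell arrays} (Theorem~\ref{thm:refinement}), and the crux --- corresponding to your hand-waved ``regularizing'' paragraph --- is Proposition~\ref{prop:finitebranching}: for a cell array there is a \emph{uniform} bound on the number of equivalence classes of potential centers, proved by a combinatorial analysis of the tree $T(\Sigma_s)$ of centers (branching heights, $d$-signatures, and a density reduction ruling out infinite branching). Only with this finiteness in hand does one repartition into \emph{regular} cell arrays (Section~\ref{sec:regularity}) and then separate these into individual regular clustered cells via the large/small dichotomy and exchange arguments of Section~\ref{sec:largesmall}. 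Your proposed inner induction on the ``number of levels between $\delta_s$ and $\gamma_s$'' has no counterpart in the paper, and there is no a priori reason for this gap to be finite or uniformly bounded --- establishing exactly that kind of finiteness is what Proposition~\ref{prop:finitebranching} accomplishes, and it requires the admissibility condition~(iv) of Definition~\ref{def:array} rather than anything supplied by an induction on $\dim S$.
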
 

Roughly speaking, classical cells are Denef-type cells, with a definable function as center. The geometric structure of a clustered cell is the same as that of a classical cell (or possibly a finite disjoint union of classical cells that only differ by their center). The main difference is that the centers of clustered cells are not given by definable functions, but instead are picked from a definable set whose fibers are finite unions of balls. 
\\\\
Note that in a structure with Skolem functions, one does not need to consider clustered cells, since one could simply choose centers from each ball and use these to split the cluster into classical cells. 
In fact, for $P$-minimal structures with definable Skolem functions, a cell decomposition theorem had already been obtained before, first by Mourgues \cite{mou-09}, whose results were later extended and refined by Darni\`ere-Halupczok \cite{darniereETAL:2015}. 

Up until quite recently, it was not so clear whether the dependence on Skolem functions was an actual restriction to the scope of these theorems, given that there were as yet no known examples of structures that didn't admit such functions. However, this changed when Nguyen and one of the authors provided such an example \cite{cubi-nguyen}, thereby showing that the existence of Skolem functions really is a restricting condition in \cite{mou-09} and \cite{darniereETAL:2015}. 
\\\\
Hence the main question we wanted to answer in this paper: is it possible to obtain a cell decomposition result valid in all $P$-minimal structures, without imposing such conditions? The question is asked here specifically in the context of Denef-Mourgues type cell decompositions,
as  there already existed other unconditional, but more topologically-oriented decomposition results (see \cite{cub-dar-lee:15}).

In a previous paper, which was motivated by questions about integration in $P$-minimal structures, two of the authors provided a first proto-version of such a description \cite{cubi-leen-2015}. However, this version, while strong enough for its intended use, still failed to provide sufficient  intuition about the geometric structure of definable sets. The current paper, which uses the work of \cite{cubi-leen-2015} as a foundation, aims to remedy this. Moreover, we intend to use our results to answer questions regarding $p$-adic integration and cell preparation in $P$-minimal structures (in future work). 

%
%
%

\

\label{sec:def}
Before introducing and explaining different types of cells and related notions, we recall some preliminaries. The next section lists some of the notations we will be using, and gives a definition of $P$-minimality. 

\subsection{Notations and preliminary definitions} \label{subsec:notations}
Let $K$ be a $p$-adically closed field (that is, elementarily equivalent to a $p$-adic field). We use the notation $\Gamma_K$ for the value group, $\ord:K\to\Gamma_K\cup\{\infty\}$ for the valuation, $q_K$ for the number of elements of the residue field $k_K$, $\cO_K$ for the valuation ring of $K$, $\mathcal{M}_K$ for the maximal ideal and $\pi_K$ for a uniformizing element. We write $B_{\gamma}(a)$  for the closed ball around $a$ with radius $\gamma$:
\[B_{\gamma}(a):= \{ x \in K \mid \ord(x-a) \geqslant \gamma\}. \]
For $m >0$, write $\acm: K^{\times} \to (\cO_K/\pi_K^m\cO_K)^{\times}$ for the unique group homomorphism such that $\acm(\pi_K) =1$ and $\acm(u) \equiv u \mod \pi_K^m$ for any unit $u \in \cO_K$. That such an \emph{angular component map} exists (and is indeed unique) was shown in Lemma 1.3 of \cite{clu-lee-2011}. We extend this to $K$ by putting $\acm(0) = 0$. 
For positive integers $n,m$, let $Q_{n,m}$ be the set
\[ Q_{n,m} := \{ x \in K^{\times} \mid \ord \,x \equiv 0 \hspace{-7pt}\mod n \wedge \acm(x) = 1\}.\]
Note that for $x \in \lambda Q_{n,m}$, the value of $\lambda$ encodes both $\acm(x)$ and $(\ord \,x \hspace{-5pt}\mod n$). 
\\\\
Following \cite{cubi-leen-2015}, we will work with a two-sorted version of $P$-minimality, where we consider both the field sort and the value group sort $\Gamma_K \cup \{\infty\}$ to be of equal importance. 
Let $(K, \Gamma_K;\Lm_2)$ be a two-sorted structure, with language $\Lm_2 = (\Lm, \cL_{Pres}, \ord)$. Here $\cL$, the language for the $K$-sort, is assumed to be an expansion of the ring language $\Lring$. For the value group sort $\Gamma_K\cup\{+\infty\}$, we use the language of Presburger arithmetic $\cL_{Pres} = (+,-,<,\{\equiv_n\}_n)$. The sorts are connected through the valuation map $\ord: K\to \Gamma_K\cup\{+\infty\}$. The definition of $P$-minimality naturally extends to this context. 

\begin{definition}
A two-sorted structure $(K, \Gamma_K; \Lm_2)$ with $\Lm_2 = (\Lm, \Lpres, \ord)$ and $\Lring \subseteq \Lm$ is said to be \emph{$P$-minimal} if the underlying structure $(K, \Lm)$ is $P$-minimal, that is, for every $(K',\Lm)$ elementarily equivalent to $(K,\Lm)$, the $\Lm$-definable subsets of $K'$ are $\Lring$-definable.
\end{definition}
 From now on we will work in a $P$-minimal structure $(K, \Gamma_K;\Lm_2)$. We refer to \cite{cubi-leen-2015} and the last section of the current paper for further discussion and justification of this choice of setting. 

\

By \emph{definable} we always mean definable with parameters. The set $S$ denotes a definable set whose variables may include both $K$-variables and $\Gamma_K$-variables.
Given a set $X\subseteq S\times Y$ and $s\in S$, we write \[X_s:=\{t\in Y\mid (s,t)\in X\}\] to denote the fiber over $s$. The topological closure of $X$ will be denoted as $\text{Cl}(X)$.

\subsection{Cells}


In our view, a cell has two major \emph{ingredients}: its center (which we will discuss further on), and the formula $C$ defining the cell.

\begin{definition}[$K$-Cell condition] \label{def:cell-condition}
\item A \emph{$K$-cell condition} over $S$ is a formula of the form
\[C(s, c, t) :=  s\in S \wedge \alpha(s) \ \square_1 \ \ord(t-c) \ \square_2 \ \beta(s) \wedge t-c \in \lambda Q_{n,m}\,, \]
where $t$ and $c$ are variables over $K$, $\alpha, \beta$ are definable functions $S \to \Gamma_K$, squares $\square_1,\square_2$ may denote either $<$ or    $\emptyset$ (i.e. `no condition'), $\lambda \in K$ and $n,m \in \N\backslash\{0\}$. The variable $c$ is called the center of the $K$-cell condition. 
\item A $K$-cell condition $C$ is called a \emph{0-cell condition}, resp.\ a \emph{1-cell condition} if $\lambda =0$, resp.\ $\lambda \neq 0$.
\end{definition}
\noindent Note that in the above definition, $t$ is assumed to be a variable in the field sort $K$ (while the parameter set $S$ may contain both $K$- and $\Gamma_K$-variables). In \cite{cubi-leen-2015}, $\Gamma$-cell conditions were also introduced, for analogous formulas with $t$ ranging over the value group sort $\Gamma_K$. 
We will say a bit more about this in Section \ref{subsec:two-sorted}. However, in the current paper we will concentrate almost exclusively on $K$-cell conditions, and hence we will often omit the $K$ and simply speak of \emph{cell conditions}.
\begin{remark}\label{rem:notation} We will use the following notational convention. Capital $C$ will always denote a cell condition over some set of parameters $S$ for which the symbols $\alpha,\beta,\lambda,\square_1, \square_2, n,m $ are fixed as in the previous definition. In particular, the letters $\alpha$ and $\beta$ will only be used to denote the functions picking the lower and upper bounds in a cell condition $C$. If multiple cell conditions are discussed at the same time, say $C_1,\ldots, C_r$, the same index will be applied to the symbols in the associated formula. Thus, $\alpha_i$ and $\beta_i$ denote the functions picking the lower and upper bounds of a cell condition $C_i$, and the use of $\square_{i1}, \square_{i2}, \lambda_i, n_i, m_i$ follows similar conventions. 
\end{remark}
Let $C$ be a cell condition over $S$ and $\sigma: S \to K$  a function (not necessarily definable). Using this function as the center for $C$, we get the induced set 
 \[C^\sigma:= \{(s,t) \in S \times K \mid C(s, \sigma(s),t)\}.\]
When there is no dependence on parameters (i.e., if $C$ is a cell condition over $S=\Gamma^{0} \times K^{0}$), a function $\sigma:S\to K$ will be identified with a point $\sigma\in K$.
Sets of the form $C^\sigma$ will be informally called \emph{cells over $S$} (or simply \emph{cells}, when the parameter set $S$ is clear from the context). 
The reader will probably be most familiar with \emph{classical cells}, that is, cells $C^{\sigma}$ for which the function $\sigma$ is definable. For instance, one may think of semi-algebraic or sub-analytic cells, where the center $\sigma$ is a semi-algebraic, resp. a subanalytic function (see \cite{denef-86, clu-2003}). 
 \\\\
We will denote the fiber of a cell $C^\sigma$ over $s\in S$ by
\[ 
C^{\sigma(s)} := \{t\in K\mid C(s,\sigma(s),t)\}.
\]
When $C$ is a $0$-, resp.\ a $1$-cell condition, we will call $C^{\sigma}$ a \emph{0-cell}, resp.\ a \emph{1-cell}.   
\begin{definition} Let $C$ be a $K$-cell condition over $S$ and $\sigma: S \to K$  a function. The \emph{leaf of $C^{\sigma(s)}$ at height $\gamma$} corresponds to the ball 
\[
C^{\sigma(s), \gamma}:=\{t\in C^{\sigma(s)} \mid  \ord(t-\sigma(s))=\gamma\}. 
\qedhere\]
\end{definition}
The fibers $C^{\sigma(s)}$ of a cell $C^{\sigma}$ can be visualised in the following way. Here we adopt the perspective used also in \cite{mac-has-94, has-mac-97},  representing elements and basic subsets of valued fields by trees (see more in Section 6). 
 \\\\
\begin{minipage}{0.45 \textwidth} 
When $C$ is a 0-cell condition, fibers correspond to points: $C^{\sigma(s)}=\{\sigma(s)\}$.
\\\\
When $C$ is a 1-cell condition, the fiber $C^{\sigma(s)}$ is the  disjoint union of its leaves $C^{\sigma(s), \gamma}$. One can check that a leaf at height $\gamma$ corresponds to a ball of radius $\gamma + m$.
\\\\
 Note  that $\sigma(s)\notin C^{\sigma(s)}$, and that $\sigma(s)\in \text{Cl}(C^{\sigma(s)})$ if and only if $\square_2=\emptyset$.

\end{minipage}
\begin{minipage}{0.05 \textwidth}\quad \end{minipage}
\begin{minipage}{0.5 \textwidth}


\begin{tikzpicture}[scale = 0.8]
\node[{above}] at (0,5.3){\footnotesize $\sigma(s)$};
\draw [fill] (0,5.3) circle [radius = 1.25pt];

\node [{below}] at (0,-0.3){\footnotesize 0-cell};

\draw (2.25,5.3) -- (3,4.25) -- (3.75,5.3) -- cycle;
\draw (3,0) -- (3,4.25);
\draw [dotted, line width=0.75pt] (3,-0.3) -- (3,0);
\draw (3,3.5) -- (2.25,4.25);
\draw (3,2.75) -- (2.25, 3.5);
\draw (3,2) -- (2.25,2.75);
\draw (3,1.25) -- (2.25,2);
\draw (3,0.5) -- (2.25,1.25);
\path [fill = grey] (2.25,4.25) -- (1.6,4.6) -- (1.9,4.9) -- cycle;
\path [fill = grey] (2.25,3.5) -- (1.6,3.85) -- (1.9,4.15) -- cycle;
\path [fill = grey] (2.25,2.75) -- (1.6,3.1) -- (1.9,3.4) -- cycle;
\path [fill = grey] (2.25,2) -- (1.6,2.35) -- (1.9,2.65) -- cycle;
\path [fill = grey] (2.25,1.25) -- (1.6,1.6) -- (1.9,1.9) -- cycle;
\draw [dashed] (1.5,4.05) -- (4,4.05);
\draw [dashed] (1.5,0.25) -- (4,0.25);
\node[{right}] at (4,4.05){\footnotesize$\beta(s)$};
\node[{right}] at (4,0.25){\footnotesize$\alpha(s)$};
\node[{above}] at (3,5.3){\footnotesize$\sigma(s)$};
\node[{right}] at (3,3.5){\footnotesize$\rho_{\max}(s)$};
\draw [fill] (3,5.3) circle [radius = 1.25pt];
\draw [fill] (3,3.5) circle [radius = 1.25pt];
\draw [decorate,decoration={brace,amplitude=4pt,mirror,raise=2pt},yshift=0pt] (3,0.5) -- (3,1.25) node [black,midway,xshift=10pt] {\footnotesize$n$};
\draw [decorate,decoration={brace,amplitude=4pt,mirror,raise=1.5pt},yshift=0pt, rotate=45] (2.47,-1.68) -- (2.47,-0.71) node [black,midway,xshift=5pt,yshift=8pt] {\footnotesize$m$};
\node [{above}] at (4.5,5.8){\footnotesize$B_{\rho_\text{max}(s)+m}(\sigma(s))$};
\path[->] (4.5,5.9)  edge [bend left=50] (3.7,5.1);
\node [{below}] at (0.9,3.25){\footnotesize leaves};
\path[->] (0.9,3.17)  edge [bend left=50] (1.7,3.25);

\node [{below}] at (3,-0.3){\footnotesize 1-cell};
\node [{below}] at (3,-0.65){\footnotesize$\square_1 = \square_2 = <$};

\draw (7,0) -- (7,4.8);
\draw [dotted, line width=0.75pt] (7,-0.3) -- (7,0);
\draw [dotted, line width=0.75pt] (7,4.8) -- (7,5.3);
\draw (7,4.25) -- (6.25,5);
\draw (7,3.5) -- (6.25,4.25);
\draw (7,2.75) -- (6.25, 3.5);
\draw (7,2) -- (6.25,2.75);
\draw (7,1.25) -- (6.25,2);
\draw (7,0.5) -- (6.25,1.25);
\path [fill = grey] (6.25,5) -- (5.6,5.35) -- (5.9,5.65) -- cycle;
\path [fill = grey] (6.25,4.25) -- (5.6,4.6) -- (5.9,4.9) -- cycle;
\path [fill = grey] (6.25,3.5) -- (5.6,3.85) -- (5.9,4.15) -- cycle;
\path [fill = grey] (6.25,2.75) -- (5.6,3.1) -- (5.9,3.4) -- cycle;
\path [fill = grey] (6.25,2) -- (5.6,2.35) -- (5.9,2.65) -- cycle;
\path [fill = grey] (6.25,1.25) -- (5.6,1.6) -- (5.9,1.9) -- cycle;
\draw [dashed] (5.5,0.25) -- (8,0.25);
\node[{right}] at (8,0.25){\footnotesize$\alpha(s)$};
\node[{above}] at (7,5.3){\footnotesize$\sigma(s)$};
\draw [fill] (7,5.3) circle [radius = 1.25pt];
\node [{below}] at (4.9,2.5){\footnotesize$C^{\sigma(s),\gamma}$};
\path[->] (4.9
,2.42)  edge [bend left=50] (5.7,2.5);
\draw [fill] (7,1.25) circle [radius = 1.25pt];
\node[{right}] at (7,1.25){\footnotesize$\gamma$};

\node [{below}] at (7,-0.3){\footnotesize 1-cell};
\node [{below}] at (7,-0.65){\footnotesize$\square_1 = <, \square_2 = \emptyset$};
\end{tikzpicture}
\end{minipage}\\\\
When $\square_2$ denotes $<$, the center of a cell $C^{\sigma}$ is not unique. Indeed, write $\rho_\text{max}(s)$
for the height of the top leaf of $C^{\sigma(s)}$  (so $\beta(s) - n \leqslant \rho_\text{max}(s) \leqslant \beta(s) - 1$). Note that $\rho_{\max}: S \to \Gamma_K$ is a definable function which only depends on the cell condition, and not on the choice of the center. 
It is easy to see that one still gets the exact same fiber $C^{\sigma(s)}$, if  $\sigma(s)$ is replaced by any other element of the ball $B_{\rho_{\max}(s) + m}(\sigma(s))$. 
%
%
Hence, it is reasonable to consider the set
\begin{equation*}
\Sigma = \{(s,c) \in S \times K \mid c \in B_{\rho_{\max}(s) + m}(\sigma(s))\}
\end{equation*}
as the set of centers for $C^{\sigma}$.
In $P$-minimal structures without definable Skolem functions, it might happen that $\Sigma$ itself is a definable set, yet no section of $\Sigma$ is definable. 
 Nevertheless, even when $\sigma$ is a non-definable section  of $\Sigma$,  the cell $C^{\sigma}$  will still be definable (as a set), since we have the equality
\[
C^{\sigma}=\{(s,t) \in S\times K \mid (\exists c)[c \in \Sigma_s \wedge C(s,c,t)]\ \}.
\]
It is therefore natural to consider the following notion. 

\begin{definition}\label{def:sigmacell} Let $C$ be a cell condition and $\Sigma \subseteq S \times K$ be a definable set. The set $C^{\Sigma}\subseteq S\times K$ is defined as
\[
C^{\Sigma}:= \{(s,t) \in S\times K \mid (\exists c)[c \in \Sigma_s \wedge C(s,c,t)]\ \ \}.
\]
\item Every (not necessarily definable) section $\sigma: S \to K$ of $\Sigma$ is called a \emph{potential center} of $C^{\Sigma}$. We call the induced sets $C^{\sigma}$ \emph{potential cells}. 
\end{definition}

Let us stress that, given two different sections $\sigma$ and $\sigma'$ of $\Sigma$, the induced cells $C^{\sigma}$ and $C^{\sigma'}$ may be very different (possibly even disjoint) subsets of $C^{\Sigma}$, since we have not yet imposed any conditions on $\Sigma$. If we want sets $C^{\Sigma}$ to be useful building blocks in our cell decomposition, we will have to significantly restrict the type of set that can occur for $\Sigma$. Indeed, every definable set $X\subseteq S\times K$ is already of the form $C^\Sigma$ if we were to take $\Sigma=X$, and $C$ a 0-cell condition over $S$. 
\\\\
Int his paper, we will show that it is sufficient to consider certain definable sets $\Sigma\subseteq S\times K$ for which there is $k\in \NN$ such that every fiber $\Sigma_s$ is the disjoint union of $k$ balls. For such a $\Sigma$, the corresponding set $C^\Sigma$ will have the following structure. 

Let $\sigma_1, \ldots, \sigma_{k}$ be sections of $\Sigma$ such that for every $s \in S$, the set $\{\sigma_1(s), \ldots, \sigma_{k}(s)\}$ contains representatives of each of the $k$  disjoint balls covering $\Sigma_s$. For any such choice, $C^\Sigma$ partitions as
\[C^\Sigma = C^{\sigma_1} \cup \ldots \cup C^{\sigma_{k}}.\]
Note that $C^\Sigma$ is definable even when no section $\sigma_i$ is definable. Such sets $C^{\Sigma}$ are what we call \emph{clustered cells} 
(for a formal definition, see Definitions \ref{def:clustered_cell} and \ref{def:regularcluster}). The main theorem of this paper essentially states that any definable set can be partitioned as a finite union of classical and clustered cells. 

\

The remainder of this paper is structured as follows. In Section \ref{sec:semi-alg}, we will revisit semi-algebraic cell decomposition for subsets of $K$, and show that every definable set $X \subseteq K$ admits a so-called \emph{admissible} cell decomposition. Such a decomposition imposes some technical restrictions on the way centers can appear as elements of a cell, and controlling this will be crucial in later proofs.

A first strengthening of the decomposition result from \cite{cubi-leen-2015} is proven in Section \ref{sec:refinement}. This intermediate result allows us to decompose a definable set into finitely many classical cells and objects called \emph{cell arrays}. Roughly speaking, a cell array is a definable set which geometrically has the structure of a finite union of cells (which may not be definable individually), possibly involving multiple cell conditions. 
 
In Section \ref{sec:separation}, we prove a finiteness result for centers. We will use this in Section \ref{sec:largesmall} to partition cell arrays into classical and regular clustered cells (where only a single cell condition is involved). The regularity condition, which is explored in Section \ref{sec:regularity}, imposes further restrictions on the set of centers.

The full cell decomposition theorem (Theorem \ref{thm:celldecomposition}) will be presented in Section \ref{subsec:reduced}. 
This last section also includes a discussion of our main result, putting it into the context of two-sorted $P$-minimality, and adding some additional remarks and open questions.

%
%
%
%


\section{Semi-algebraic cell decomposition revisited} \label{sec:semi-alg}

Since every ball is the disjoint union of $q_K$ smaller balls, semi-algebraic sets $X\subseteq K$ admit infinitely many different cell decompositions. A decomposition $\mathcal{C}$ consists of the following data: a finite set $I$ and, for each $i\in I$, a cell condition $C_i$ and a center $\sigma_i\in K$. We denote this as $\mathcal{C}=\{C_i^{\sigma_i}\mid i\in I\}$. Note that since all cells are subsets of $K$, the center $\sigma$ of every cell $C^{\sigma}$ is an element of $K$ rather than a function.
We will also use the notations 
\[
\mathcal{C}(K):= \bigcup_{i\in I} C_i^{\sigma_i} \hspace{1cm} \text{and} \hspace{1cm} \Cen(\mathcal{C}):=\{\sigma_i \mid i\in I\}.  
\]

Two decompositions $\mathcal{C}$ and $\mathcal{D}$ are \emph{equivalent} if they define the same set, that is, if $\mathcal{C}(K)=\mathcal{D}(K)$. Given a set $X\subseteq K$ and a ball $B$, we use the notation $B \sqsubseteq X$ to indicate that $B$ is maximal with respect to inclusion in $X$. 
\\\\
In this section we will define a collection of so-called \emph{admissible decompositions} and show that every semi-algebraic set $X\subseteq K$ admits a decomposition from this collection. First we need to introduce some further notation. 
%

\begin{definition} Let  $\mathcal{C}=\{C_i^{\sigma_i}\mid i\in I\}$ be a decomposition. Define the subset of cells $\mathcal{C}^\ast\subseteq \mathcal{C}$ as 
\[
\mathcal{C}^\ast:=\{C_i^{\sigma_i}\mid \sigma_i\neq 0 \wedge \square_{1,i}=\square_{i,2}=<\}.
\]
We define the set $W(\mathcal{C})$ as the following subset of centers in $\mathcal{C}^\ast$:  
\[
W(\mathcal{C}):=\left\{\sigma\in \Cen(\mathcal{C}^\ast) \ \left| \ (\exists \gamma\in \Gamma_K) \left[B_\gamma(\sigma) \sqsubseteq \mathcal{C}^\ast (K) \wedge \bigwedge_{C_i^{\sigma_i}\in \mathcal{C}^\ast} B_\gamma(\sigma)\not\subset C_i^{\sigma_i} \right] \right\}\right..
\qedhere \]
\end{definition}
In words, $W(\mathcal{C})$ consists of those centers in $\mathrm{Centers}(\mathcal{C}^*)$ which are in $\mathcal{C}^*(K)$, but where the biggest ball in $\mathcal{C}^\ast(K)$ around this center is not contained within a single cell of $\mathcal{C}^*$.
We are now able to define what admissible decompositions are. 

\begin{definition}\label{def:admissible} A decomposition $\mathcal{C}=\{C_i^{\sigma_i}\mid i\in I\}$ is called \emph{pre-admissible} if it satisfies the following properties: 
\begin{enumerate} 
\item[(a)] For every 0-cell $C_i^{\sigma_i}$, if $\sigma_i\neq 0$ then $\sigma_i\in X\setminus\Int(X)$. 
\item[(b)] For every 1-cell $C_i^{\sigma_i}$, if $\sigma_i\neq 0$ and $\square_{i,1} = <$ then $\ord \sigma_i\leqslant \alpha_i$. 
\item[(c)] For every 1-cell $C_i^{\sigma_i}$ in which $\square_{i,1}=\emptyset$, it holds that $\sigma_i=0$.
\end{enumerate}
It is called \emph{admissible} if it moreover satisfies 
\begin{enumerate}
\item[(d)] $W(\mathcal{C})=\emptyset$. \qedhere
\end{enumerate}
\end{definition}
\noindent Condition (a) ensures that elements defined by 0-cells different from $\{0\}$ are isolated points. Condition (c) will later imply that cells for which $\square_1=\emptyset$, will always be centered at 0. Conditions (b) and  (d), which might seem arbitrary at this point, will be needed for technical reasons in later proofs.\\\\ The goal of this section is to prove the following theorem:

\begin{theorem}\label{thm:admissible}
Every semi-algebraic set $X\subseteq K$ has an admissible cell decomposition. 
\end{theorem}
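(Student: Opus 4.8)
The plan is to start with an arbitrary semi-algebraic cell decomposition $\mathcal{C}=\{C_i^{\sigma_i}\mid i\in I\}$ of $X$, which exists by the classical $p$-adic cell decomposition theorem (Denef), and then modify it in three successive stages so that conditions (a)--(c) become satisfied, and finally perform a further surgery to arrange condition (d). The first observation is that properties (a), (b), (c) concern individual cells in isolation, so they can be achieved by replacing problematic cells one at a time. For a 0-cell $C_i^{\sigma_i}$ with $\sigma_i\neq 0$: if $\sigma_i$ is an interior point of $X$, then a whole ball around $\sigma_i$ lies in $X$, and one may re-decompose that ball (together with $\{\sigma_i\}$) into cells centered at $0$, or absorb the point into a neighbouring 1-cell; if $\sigma_i\in X\setminus\Int(X)$ there is nothing to do. For a 1-cell $C_i^{\sigma_i}$ with $\sigma_i\neq0$ and $\square_{i,1}=<$ but $\ord\sigma_i>\alpha_i$: here the center sits ``inside'' the annulus of the cell, but since the fiber is invariant under replacing $\sigma_i$ by any point of the top ball $B_{\rho_{\max}+m}(\sigma_i)$ only when $\square_2=<$, the genuine fix is to split the cell along the finitely many values $\ord(t-\sigma_i)\in\{\alpha_i,\alpha_i+1,\dots\}$ near $\ord\sigma_i$ and recenter each piece at $0$ or shift the lower bound; this is a routine finite subdivision. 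For a 1-cell with $\square_{i,1}=\emptyset$ and $\sigma_i\neq0$: recenter at $0$, again after a finite subdivision of the bounded part.

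The genuinely new and delicate part is arranging $W(\mathcal{C})=\emptyset$ while preserving (a)--(c). Recall $W(\mathcal{C})$ consists of centers $\sigma$ of cells in $\mathcal{C}^*$ lying in $\mathcal{C}^*(K)$ such that the maximal ball $B_\gamma(\sigma)\sqsubseteq\mathcal{C}^*(K)$ is not contained in any single cell of $\mathcal{C}^*$. The idea is to process such centers and show the procedure terminates. Fix $\sigma\in W(\mathcal{C})$ with witnessing ball $B:=B_\gamma(\sigma)$. Since $B$ is maximal in $\mathcal{C}^*(K)$ but meets several cells, $B$ decomposes into its $q_K$ sub-balls of radius $\gamma+1$, and the relevant data (which cells of $\mathcal{C}^*$ a sub-ball meets, and whether $\sigma$ lies in it) is determined at this finite level. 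I would re-decompose $\mathcal{C}^*(K)\cap B$ afresh: the sub-ball containing $\sigma$ is handled recursively or, because $\sigma\notin C_i^{\sigma_i}$ for each $i$ (as $\square_2=<$ forces $\sigma$ out of its own cell), one chooses a \emph{new} center for the pieces meeting that sub-ball --- a center not equal to $\sigma$, picked so that the new maximal ball around it \emph{is} contained in a single new cell (shrink the ambient ball, or recenter at a point of one of the constituent cells). The key bookkeeping device is a well-founded complexity measure --- for instance, the multiset of radii $\gamma$ of the maximal balls $B_\gamma(\sigma)\sqsubseteq\mathcal{C}^*(K)$ for $\sigma\in W(\mathcal{C})$, or the number of ``bad'' centers together with the sizes of the associated balls --- and one verifies each surgery strictly decreases it, so after finitely many steps $W=\emptyset$.

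The main obstacle I anticipate is precisely the termination/finiteness of this last procedure: naively recentering can create new bad centers, and one must show the process does not loop. This is where I expect the paper to invoke a separate finiteness statement (it announces ``a finiteness result for centers'' in Section~\ref{sec:separation}); in a self-contained argument one would need compactness of $\cO_K$ / definability to bound the number of maximal balls of a given radius that can be ``bad'', and an induction on radius. A secondary, more bureaucratic obstacle is checking that the surgeries for (d) do not destroy (a)--(c): the new 0-cells introduced are either $\{0\}$ or isolated points of $X$ (so (a) holds), and the new 1-cells are set up with $\square_1=<$ and center either $0$ or of valuation $\leqslant$ their lower bound by construction. I would organize the write-up as: (i) reduce to pre-admissible via the three local fixes; (ii) define the complexity measure on pre-admissible decompositions; (iii) show that any pre-admissible $\mathcal{C}$ with $W(\mathcal{C})\neq\emptyset$ admits a pre-admissible refinement of strictly smaller complexity; (iv) conclude by well-foundedness.
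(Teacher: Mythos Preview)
Your two-stage plan (first achieve pre-admissibility via local fixes to individual cells, then arrange $W(\mathcal{C})=\emptyset$ by iterated surgery with a decreasing complexity measure) is exactly the structure of the paper's proof, which splits into Lemma~\ref{lem:goodcells} and Lemma~\ref{lem:admissible}. Your descriptions of the fixes for (a), (b), (c) are close to what the paper does, though the paper is more explicit about the recentering at $0$ (it writes out the pieces $E', E_i, E_{0,k}, F_r$ in detail).

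Two points deserve correction. First, your expectation that the paper invokes the ``finiteness result for centers'' from Section~\ref{sec:separation} is a misreading: that section concerns the tree of potential centers for clustered cells over a parameter set $S$, and is entirely unrelated to the present argument. Here the finiteness of $W(\mathcal{C})$ is immediate, since $W(\mathcal{C})\subseteq \Cen(\mathcal{C}^*)$ and $\mathcal{C}$ is a finite decomposition. Second, and more substantively, the paper's complexity measure is simpler than any of your candidates: it is just the cardinality $|W(\mathcal{C})|$. The real content of the argument (the paper's Claim~\ref{claim}) is that the surgery can be performed so that $W(\mathcal{D}_{l+1})\subseteq W(\mathcal{D}_l)$ as \emph{sets}, not merely that some numerical invariant drops. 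Concretely: given $\sigma_{j_0}\in W(\mathcal{D}_l)$ with maximal ball $B_\rho(\sigma_{j_0})\sqsubseteq \mathcal{D}_l^*(K)$, one replaces that entire ball by a single cell (centered at $0$ if possible, otherwise at some $\zeta\notin\mathcal{D}_l^*(K)$ chosen from $B_{\rho-1}(\sigma_{j_0})$), and restricts the old cells meeting $B_\rho(\sigma_{j_0})$ to their parts outside it. The choice of $\zeta$ outside $\mathcal{D}_l^*(K)$ is what guarantees no new bad center is introduced; your sketch of ``recenter at a point of one of the constituent cells'' would not obviously achieve this and is where a naive approach could loop. Once $W(\mathcal{D}_{l+1})\subseteq W(\mathcal{D}_l)$ is established, one checks separately that $\sigma_{j_0}\notin W(\mathcal{D}_{l+1})$, and termination follows.
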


We split the proof of Theorem \ref{thm:admissible} into two steps: we first show (in the next lemma) that semi-algebraic sets always have a pre-admissible decomposition. The second step will then be to prove that every pre-admissible decomposition can be modified into an admissible one.

\begin{lemma}\label{lem:goodcells} Every semi-algebraic set $X\subseteq K$ has a pre-admissible decomposition.
\end{lemma}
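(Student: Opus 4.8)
\textbf{Proof plan for Lemma \ref{lem:goodcells}.}

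The plan is to start from \emph{any} semi-algebraic cell decomposition of $X$, which exists by Denef's classical cell decomposition theorem for $\Lring$-definable (equivalently, semi-algebraic) subsets of $K$, and then modify each cell so that conditions (a), (b) and (c) hold, without changing the set $X$ defined by the union. Since every one of these modifications only rewrites an individual cell as a finite union of cells defining the same subset of $K$, the end result will still be a decomposition of $X$. The three conditions essentially only constrain cells whose center is nonzero, so the whole point is to argue that a ``badly centered'' cell can always be re-centered at $0$, or split into pieces that are.

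I would proceed condition by condition. For condition (c): a $1$-cell $C^{\sigma}$ with $\square_1 = \emptyset$ has fiber contained in a single ball $B_{\beta - \text{(something)}}(\sigma)$ independent of a lower bound, so the set it defines is itself of the form $\{t \mid \ord(t - 0)\ \square_1'\ \gamma \wedge \ldots\}$ after absorbing $\ord \sigma$ appropriately; more carefully, one replaces $C^\sigma$ by a decomposition using center $0$ — this is possible because when there is no lower bound the leaves of the cell are ``eventually all of a ball'', and a ball around $\sigma$ equals a ball around $0$ once we go low enough in valuation, while the finitely many leaves that are not yet absorbed can be peeled off as $0$-cells (which, being single points, can be centered at $0$ only if they are $\{0\}$, but we may instead just discard the requirement for $0$-cells here and handle them under (a)). For condition (b): if $C^{\sigma}$ is a $1$-cell with $\square_1 = <$ but $\ord \sigma > \alpha$, then the leaves of $C^{\sigma}$ at heights below $\ord \sigma$ form a ball around $0$ (since $\ord(t - \sigma) = \ord(t)$ when $\ord t < \ord \sigma$), so we split the cell at the height $\ord \sigma$: the low part becomes a $1$-cell centered at $0$, and the high part is a $1$-cell with the same center but with $\alpha$ raised to $\ord\sigma$ (possibly further subdivided), which now satisfies $\ord\sigma \leqslant \alpha$. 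For condition (a): a $0$-cell $C^{\sigma} = \{\sigma\}$ with $\sigma \neq 0$ and $\sigma \in \Int(X)$ is redundant — $\{\sigma\}$ is already covered by a neighborhood inside $X$, so one can try to absorb it, but more robustly one first throws away all singleton $0$-cells, takes a cell decomposition of the (still semi-algebraic) set $X \setminus \Int(X)$ together with $\Int(X)$ separately, using for $\Int(X)$ a decomposition into $1$-cells and $\{0\}$ only, and for $X \setminus \Int(X)$ the finitely many isolated points it may contain as $0$-cells — these lie in $X \setminus \Int(X)$ by construction.

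In practice it is cleanest to do all three simultaneously: decompose $K$ (or $X$) finely enough that every cell is either $\{0\}$, a $1$-cell, or a singleton $0$-cell; then for each singleton $\{\sigma\}$ with $\sigma \neq 0$, check whether $\sigma \in \Int(X)$ and if so absorb it into an ambient $1$-cell of the decomposition (refining that $1$-cell near $\sigma$ so the point fits as a leaf or a sub-ball); and for each nonzero-centered $1$-cell, perform the two splittings described above to enforce (b) and (c). Each step is a finite operation producing finitely many new cells, so the process terminates and yields a pre-admissible decomposition.

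The main obstacle I anticipate is the bookkeeping in condition (a): showing that an interior point $\sigma$ can genuinely be \emph{absorbed} into a neighbouring $1$-cell of the given decomposition rather than merely deleted — one must exhibit a concrete $1$-cell in the (refined) decomposition that, possibly after subdividing, contains a small ball around $\sigma$, and verify that re-expressing that $1$-cell to include $\sigma$ as one of its leaves keeps it a legitimate cell with a center of the required shape. This requires knowing that near any interior point the restriction of $X$ to a small ball is exactly that ball, which is where $P$-minimality (via the semi-algebraic cell decomposition applied to $\Int(X)$) does the work; the rest is careful but routine manipulation of valuation inequalities.
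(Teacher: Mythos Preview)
Your overall strategy --- start from an arbitrary cell decomposition and repair each cell so that (a), (b), (c) hold --- is exactly the paper's. Your treatment of (a) via decomposing $\Int(X)$ and $X\setminus\Int(X)$ separately is a legitimate variant (the paper instead absorbs an interior $0$-cell $\{\sigma_j\}$ into the maximal ball $B_\gamma(\sigma_j)\subseteq X$ and rewrites that ball as a single $1$-cell centered at a nearby point $\zeta$).

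There is, however, a genuine gap in your handling of (b) and (c). You write that the residual leaves ``can be peeled off as $0$-cells (which, being single points \ldots)''. Leaves are not points: the leaf of $C^{\sigma}$ at height $\gamma$ is a ball of radius $\gamma+m$. So the problematic part of the argument is not the low range (where indeed $\ord(t-\sigma)=\ord t$ and $\acm(t-\sigma)=\acm t$ once $\ord t\leqslant\ord\sigma-m$, giving a $1$-cell centered at $0$), nor the high range (which keeps the center $\sigma$ with $\alpha$ raised). It is the finitely many leaves at heights $\ord\sigma-m+1,\ldots,\ord\sigma$, where $\ord(t-\sigma)$ and $\ord t$ may differ and the angular component condition does not transfer cleanly. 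Each of these leaves is a ball that must be rewritten as a $1$-cell (or finite union of $1$-cells) centered at $0$; the leaf at height exactly $\ord\sigma$ may even contain $0$, in which case it must be split as $\{0\}$ together with $q_K-1$ cells of the form $\{t:\ord t>\ord\sigma+m-1,\ t\in\mu_r Q_{1,1}\}$. This bookkeeping --- adjusting the coset $\lambda Q_{n,m}$ to the correct $\mu Q_{n,m'}$ at each transition height --- is the actual technical content of the lemma, and your proposal does not supply it. Once you replace the ``peel off as $0$-cells'' step by this finite case analysis on the heights in $[\ord\sigma-m+1,\ord\sigma]$, the argument goes through and matches the paper's.
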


\begin{proof} Let $\mathcal{C}=\{C_i^{\sigma_i}\mid i\in I\}$ be a cell decomposition of $X$. Let $a(\mathcal{C})$ ( resp.\ $b(\mathcal{C})$ and $c(\mathcal{C})$) be the number of cells in $\mathcal{C}$ which are counterexamples of part (a) of Definition \ref{def:admissible} (resp.\ of (b) and (c)). If $a(\mathcal{C})>0$ (resp.\ $b(\mathcal{C})>0,\  c(\mathcal{C})>0$), we will show how to produce a cell decomposition $\widehat{\mathcal{C}}$ of $X$ such that $a(\widehat{\mathcal{C}})\leqslant a(\mathcal{C})-1$ (and similarly for $b(\mathcal{C})$ and $c(\mathcal{C})$). By iterating this process a finite number of times, one can then obtain a cell decomposition satisfying (a) (resp.\ (b) and (c)). 

Fix an index $j\in I$ such that $\sigma_j\neq 0$, $C_{j}^{\sigma_{j}}$ is the 0-cell $C_{j}^{\sigma_{j}}=\{\sigma_{j}\}$ and $\sigma_{j}\in \Int(X)$. Write $\mathcal{C} = \{C_{j}^{\sigma_j}\} \cup \mathcal{C}_1 \cup \mathcal{C}_2$, where
\[ \mathcal{C}_1 := \{ C_i^{\sigma_i} \in \mathcal{C} \mid i\neq j \wedge \sigma_j \in \text{Cl}(C_i^{\sigma_i})\},\] and $\mathcal{C}_2 = \mathcal{C} \setminus (\{C_{j}^{\sigma_j}\} \cup \mathcal{C}_1)$.
Let $X'$ be the set $X' = C_{j}^{\sigma_{j}} \cup \mathcal{C}_1(K)$. Let $\gamma\in \Gamma_K$ be minimal such that $B_{\gamma}(\sigma_{j})$ is contained in $X'$. If no minimal $\gamma$ exists, set $\gamma:=\ord(\sigma_{j})$. Note that this case  only occurs when $X'=K$. Indeed,  by a result of Cluckers (see Lemma 2 and Theorem 6 of  \cite{clu-presb03}), $P$-minimal definable subsets of $\Gamma_K$ are Presburger-definable. From this it follows that
every definable subset of $\Gamma_K$ without a minimal element must be unbounded from below, hence $X'$ contains arbitrarily large balls. Let $\zeta \in K$ be such that $\ord(\sigma_j-\zeta)=\gamma-1$ and let $D^{\zeta}$ be the cell
\[
D^{\zeta}:=\{t\in K \mid \ord(t-\zeta)=\gamma-1 \wedge t-\zeta\in\lambda Q_{1,1}\},
\]
where we have chosen $\lambda \in K$ such that $D^{\zeta} = B_{\gamma}(\sigma_{j})$. 
For every 1-cell $C_i^{\sigma_i}\in \mathcal{C}_1$,  
let $D_i^{\sigma_i}$ be the 1-cell obtained from $C_i^{\sigma_i}$ by replacing $\square_{i,2}$ by $<$ and making $\gamma$ the upper bound. Then the set of cells $\widehat{\mathcal{C}}$ formed by 
\[ 
\{D^{\zeta}\} \cup \{D_i^{\sigma_i} \mid C_i^{\sigma_i} \in \mathcal{C}_1\} \cup \mathcal{C}_2
\]
is a cell decomposition of $X$. Clearly, $a(\widehat{\mathcal{C}})\leqslant a(\mathcal{C})-1$. 
\\\\
Suppose that $\mathcal{C}$ satisfies (a). Let $C_{j}^{\sigma_{j}} \in \mathcal{C}$ be a 1-cell centered at $\sigma_{j}\neq 0$ for which either $\alpha_{j}<\ord(\sigma_{j})$, or $\square_{1j} = \emptyset$.   We need to consider two cases, depending on whether $\ord(\sigma_{j})<\beta_{j}$ or $\beta_{j}\leqslant\ord(\sigma_{j})$. We will only discuss the first case in detail, as the second one is completely similar.
If $\ord(\sigma_{j})<\beta_{j}$, first partition the cell $C_{j}^{\sigma_{j}}$ further as \begin{align*}
D^{\sigma_{j}}&:=\{t\in K \mid \ord(\sigma_{j})<\ord(t-\sigma_{j}) \ \square_{j2} \ \beta_{j} \wedge  t-\sigma_{j}\in\lambda_{j} Q_{n_{j},m_{j}}\},\\
E&:=\{t\in K\mid \alpha_{j} \ \square_{j1} \ \ord(t-\sigma_j) \  < \ord(\sigma_{j}) +1 \wedge t-\sigma_j\in\lambda_{j} Q_{n_{j},m_{j}}\}.
\end{align*}
To prove our claim, we need to show how the cell $E$ can be partitioned as a finite union of 1-cells centered at 0. 
Put $M_j:= \min\{m_j, \ord(\sigma_j) -\alpha_j\}$ (or just $M_j = m_j$ if $\square_{j1} = \emptyset$). We will first partition $E$ further as $E' \cup E_0 \cup \ldots \cup E_{M_j-1}$, where
\begin{align*}
E' &:= \{t \in K \mid \alpha_{j} \ \square_{j1} \ \ord(t - \sigma_j) \  < \ord(\sigma_{j}) -m_j +1 \wedge t - \sigma_j\in\lambda_{j} Q_{n_{j},m_{j}}\},\\
E_{i} &:= \{t \in K \mid \ord(t-\sigma_j) = \ord(\sigma_{j}) - i \wedge t - \sigma_j\in\lambda_j Q_{n_{j},m_{j}}\},
\end{align*}
Note that most of these sets are actually already cells centered at zero (and some might be empty). Indeed, for $E'$ we can rewrite the description of the set as
\[E' = \{t \in K \mid \alpha_{j} \ \square_{j1} \ \ord(t) \  < \ord(\sigma_{j}) -m_j +1 \wedge t\in\lambda_{j} Q_{n_{j},m_{j}}\}.\]
 Similarly, for $1\leqslant i \leqslant M_j -1$, we have that 
\[E_{i} = \{t \in K \mid \ord(t) = \ord(\sigma_{j}) - i \wedge t\in\mu_{i} Q_{n_{j},m_{j}}\},\]
  where $\mu_{i} \in K$ is chosen in such a way as to assure that $t-\sigma_j \in \lambda_{j} Q_{n_j,m_j}$. 
  \\\\ 
 When $i=0$, we need to do a bit more work. A further partitioning will be necessary. For $0\leqslant k < m_j$, let $E_{0, k}$ be the set
 \[ E_{0,k}:= \{t \in E_0 \mid \ord t = \ord \sigma_j +k \},\]
 and we write $E_{0, >}$ for the set
 \[E_{0,>}:= \{t \in E_0 \mid \ord t \geqslant \ord \sigma_j +m_j\}.\]
 Then clearly, if they are non-empty, the sets $E_{0,k}$ are cells centered at zero, since for a suitably chosen value $\mu_{0,k} \in K$, they can be rewritten as
 \[E_{0,k} = \{t \in K \mid \ord(t) = \ord(\sigma_{j}) +k \wedge t\in\mu_{0,k} Q_{n_{j},m_{j}-k}\}.\]
 Finally, consider the set $E_{0,>}$. First note that this set is empty unless $-\sigma_j \in \lambda_jQ_{n_j,m_j}$, as for elements of this set it holds that $\left[t-\sigma_j \in \lambda_jQ_{n_j,m_j} \Leftrightarrow -\sigma_j \in \lambda_jQ_{n_j,m_j}\right]$. Moreover, if  $E_{0,>}$ is non-empty, it equals the ball $B_{\ord(\sigma_j) + m_j}(0)$. In this case, we will partition $E_{0,>}$ into cells $\{F_0, \ldots, F_{q_K-1}\}$ as follows.
Put $F_{0}:=\{0\}$ and for $1\leqslant r\leqslant q_K-1$, define
\[
F_{r}:=\{t\in K \ | \  \ord(\sigma_j)+m_j-1 < \ord(t) \wedge t\in \hat{\mu}_{r}Q_{1,1},\}
\]
where $\hat{\mu}_{1},\ldots,\hat{\mu}_{q_K-1} \in K$ are representatives such that $ac_1(K^\times)=ac_1(\{\hat{\mu}_{1},\ldots,\hat{\mu}_{q_K-1}\})$. 
To summarize, we obtain the following decomposition of $E_0$, which we will denote as $\mathcal{E}_0$. Put
\[\mathcal{E}_0:= \left\{ \begin{array}{lcl}
\{E_{0,k} \mid 0 \leqslant k < m_j\} \cup \{F_{r} \mid 0 \leqslant r \leqslant q_K -1\} & \ & \text{if} -\sigma_j \in \lambda_jQ_{n_j,m_j}, \\
\{E_{0,k} \mid 0 \leqslant k < m_j\} & \ & \text{otherwise}.
\end{array}\right.\]
 
%
 Now let $\widehat{\mathcal{C}}$ be the decomposition obtained by replacing $C_{j}^{\sigma_{j}}$ by the cells in $\{D^{\sigma_j}, E'\} \cup\{E_{i} \mid 1 \leqslant i \leqslant M_j{-1}\} \cup \mathcal{E}_0$.
 If $C_j^{\sigma_j}$ was a cell contradicting (b), (resp. (c)), then $\widehat{\mathcal{C}}$ is a cell decomposition of $X$ for which $b(\widehat{\mathcal{C}})=b(\mathcal{C})-1$ and $c(\widehat{\mathcal{C}}) \leqslant c(\mathcal{C})$ (resp. $c(\widehat{\mathcal{C}})=c(\mathcal{C})-1$ and $b(\widehat{\mathcal{C}}) \leqslant b(\mathcal{C})$). Moreover, no new 0-cells that are not centered at 0, were added during this process, so $\widehat{\mathcal{C}}$ still satisfies property (a). Repeating this partitioning process for a finite number of cells then yields the lemma.
\end{proof}

It remains to show that every pre-admissible decomposition allows an equivalent admissible decomposition. 
We need the introduce some additional notations first. Given a cell $C^{\sigma}$ with $\square_1=\square_2=<$, and an interval $(\alpha',\beta')$,  we put
\[
C^{\sigma}_{|(\alpha',\beta')}:=\{t\in K \mid \widetilde{\alpha} \ < \ \ord(t-\sigma) \ < \widetilde{\beta} \wedge t-\sigma\in\lambda Q_{n,m}\},
\]
where $(\widetilde{\alpha}, \widetilde{\beta}) = (\alpha,\beta) \cap (\alpha',\beta')$,
\begin{lemma}\label{lem:admissible} Let $\mathcal{C}$ be a pre-admissible decomposition. 
Then there exists an equivalent decomposition $\mathcal{D}$ which is admissible.  
\end{lemma}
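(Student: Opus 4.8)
The plan is to start with a pre-admissible decomposition $\mathcal{C}$ and repair condition (d) while preserving (a)--(c). Recall that $W(\mathcal{C})$ collects those centers $\sigma\in\Cen(\mathcal{C}^\ast)$ lying in $\mathcal{C}^\ast(K)$ for which the maximal ball $B_\gamma(\sigma)\sqsubseteq\mathcal{C}^\ast(K)$ is not contained in any single cell of $\mathcal{C}^\ast$. The strategy is to measure the obstruction by a numerical invariant — for instance the cardinality $\abs{W(\mathcal{C})}$, or more robustly a lexicographic pair built from $\abs{W(\mathcal{C})}$ together with the total number of cells — and show that whenever $W(\mathcal{C})\neq\emptyset$ one can pass to an equivalent decomposition $\widehat{\mathcal{C}}$ with strictly smaller invariant which is still pre-admissible. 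Since any strictly decreasing sequence of such invariants terminates, iterating produces an admissible decomposition, and equivalence is preserved at each step, so $\mathcal{D}:=$ the final decomposition satisfies $\mathcal{D}(K)=\mathcal{C}(K)=X$.

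For the inductive step, fix $\sigma=\sigma_j\in W(\mathcal{C})$ with witnessing radius $\gamma$, so $B:=B_\gamma(\sigma)\sqsubseteq\mathcal{C}^\ast(K)$ but $B$ meets at least two cells of $\mathcal{C}^\ast$ (and is not contained in any one of them). The idea is to re-cut $\mathcal{C}^\ast$ over the ball $B$: every 1-cell $C_i^{\sigma_i}\in\mathcal{C}^\ast$ whose fiber intersects $B$ does so in a union of leaves at heights $\geqslant\gamma$ (since $B$ is a maximal ball in $\mathcal{C}^\ast(K)$, no leaf of height $<\gamma$ can be involved without forcing $B$ to enlarge); using $C^{\sigma_i}_{i\,|(\alpha',\beta')}$ as introduced just before the lemma, split each such $C_i^{\sigma_i}$ into its part above $B$ (with lower bound made equal to $\gamma-1$, or rather with $\widetilde\alpha$ adjusted so that the leaves of height $\geqslant\gamma$ are isolated) and its part outside $B$. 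The part of $\mathcal{C}^\ast(K)$ lying inside $B$ is then a definable subset of the single ball $B$, which by semi-algebraic cell decomposition (Theorem in \cite{denef-86}, or just ordinary cell decomposition) admits a cell decomposition all of whose cells are centered at a single well-chosen point $\zeta$ with $\ord(\zeta-\sigma)=\gamma-1$ — concretely one can first write $B$ itself as a $1$-cell $D^\zeta$ over $\lambda Q_{1,1}$ exactly as in the proof of Lemma \ref{lem:goodcells}, and then decompose the subset inside it relative to that center. Replacing the old pieces by $D^\zeta$-centered cells together with the ``outside $B$'' truncations of the original cells yields $\widehat{\mathcal{C}}$, equivalent to $\mathcal{C}$.

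It then remains to check two things. First, $\widehat{\mathcal{C}}$ is pre-admissible: truncating the upper or lower bound of a $1$-cell centered at $\sigma_i\neq0$ does not affect conditions (a)--(c) (the new cells inherit $\ord\sigma_i\leqslant\alpha_i\leqslant\widetilde\alpha_i$ and no $\square_1$ is turned into $\emptyset$), the new cells centered at $\zeta$ or at $0$ inside $B$ can be arranged to satisfy (a)--(c) (any spurious $0$-cell $\{\zeta\}$ with $\zeta\in\Int(X)$ gets reabsorbed into the surrounding $1$-cell as in Lemma \ref{lem:goodcells}, or simply avoided by choosing the decomposition of the interior of $B$ carefully), and no new $0$-cells with nonzero center that fail (a) are created. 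Second, the invariant drops: $\sigma$ is no longer in $W(\widehat{\mathcal{C}})$ because the maximal ball around $\sigma$ in $\widehat{\mathcal{C}}^{\,\ast}(K)$ is now either $B$ itself sitting inside the single cell $D^\zeta$, or a proper subball thereof; and crucially the re-cutting does not introduce new members of $W$ — here one must verify that for every other center $\sigma'\in\Cen(\widehat{\mathcal{C}}^{\,\ast})$ the maximal $\widehat{\mathcal{C}}^{\,\ast}(K)$-ball around $\sigma'$ is unchanged (it was already either disjoint from $B$ or contained in $B$, and in the latter case we controlled it by hand). \emph{This last point is the main obstacle}: one has to show the surgery localized at $B$ genuinely shrinks $W$ rather than merely moving the problem to a neighbouring center, which is why the ``maximal ball'' bookkeeping and the maximality $B\sqsubseteq\mathcal{C}^\ast(K)$ have to be used very carefully, and likely why a lexicographic invariant (rather than $\abs{W(\mathcal{C})}$ alone) is needed to absorb the bounded number of auxiliary cells created inside $B$.
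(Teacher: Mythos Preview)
Your overall plan---pick $\sigma\in W(\mathcal{C})$, isolate the maximal ball $B=B_\rho(\sigma)\sqsubseteq\mathcal{C}^\ast(K)$, truncate the cells meeting $B$ to their parts outside $B$, and re-represent $B$ by a single well-placed cell---is exactly the paper's strategy. However, the place where you say ``this last point is the main obstacle'' is a genuine gap, and the paper resolves it with two specific ideas that your proposal does not have.

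\medskip

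\textbf{First: the choice of $\zeta$.} You propose to take any $\zeta$ with $\ord(\zeta-\sigma)=\rho-1$ and hope a lexicographic invariant saves you. The paper instead uses the \emph{maximality} of $B$ in $\mathcal{C}^\ast(K)$: since $B_{\rho-1}(\sigma)\not\subseteq\mathcal{C}^\ast(K)$, one may choose $\zeta\in B_{\rho-1}(\sigma)\setminus\mathcal{C}^\ast(K)$. Then $\zeta\notin\widehat{\mathcal{C}}^{\,\ast}(K)$ (the set $\mathcal{C}^\ast(K)$ is unchanged by the surgery), so $\zeta$ can never land in $W(\widehat{\mathcal{C}})$. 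With this choice one proves directly that $W(\widehat{\mathcal{C}})\subseteq W(\mathcal{C})$ and that $\sigma\notin W(\widehat{\mathcal{C}})$, so $\lvert W\rvert$ drops by at least one. No lexicographic invariant is needed. Note also that since $B\subseteq\mathcal{C}^\ast(K)$, the ``part of $\mathcal{C}^\ast(K)$ inside $B$'' is simply $B$ itself; there is no further decomposition to perform inside $B$, just a single cell $D^\zeta$ representing the whole ball.

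\medskip

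\textbf{Second: the three-case split.} Your uniform recipe ``pick $\zeta$ with $\ord(\zeta-\sigma)=\rho-1$'' does not always give a cell satisfying condition (b): the new cell $D^\zeta$ has lower bound $\rho-2$, so one needs $\ord(\zeta)\leqslant\rho-2$, which can fail if $0$ is too close to $B$. The paper handles this by splitting into three cases. If $0\in B$, decompose $B$ as $\{0\}$ together with $q_K-1$ cells of the form $\{t:\rho-1<\ord t,\ t\in\mu_iQ_{1,1}\}$, all centered at $0$. If $0\notin B$ but $\rho-\ord(\sigma)=m$ for some standard $m\in\N$, write $B$ as a single cell $\{t:\ord t=\rho-m,\ t\in\lambda Q_{1,m}\}$, again centered at $0$. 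Only when $\rho-\ord(\sigma)>m$ for every $m\in\N$ does one use the new center $\zeta$ as above, and then $\ord(\zeta)=\ord(\sigma)<\rho-2$ automatically, so (b) holds. In the first two cases all new cells are centered at $0$ and hence are not in $\widehat{\mathcal{C}}^{\,\ast}$ at all, so they cannot contribute to $W(\widehat{\mathcal{C}})$; this makes the inclusion $W(\widehat{\mathcal{C}})\subseteq W(\mathcal{C})$ immediate in those cases.
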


\begin{proof} We use induction on $l$, for $0\leqslant l\leqslant L=|W(\mathcal{C})|$, to show that there exist equivalent pre-admissible decompositions $\mathcal{D}_l$ such that 
\begin{enumerate}
\item $\mathcal{D}_0=\mathcal{C}$;
\item if $W(\mathcal{D}_{l})\neq\emptyset$ then $|W(\mathcal{D}_{l+1})|\leqslant |W(\mathcal{D}_l)|-1$.
\end{enumerate}
The result will then follow by putting $\mathcal{D}:=\mathcal{D}_{L}$. 
For $l=0$, there is nothing to prove. Suppose that $\mathcal{D}_l:=\{C_j^{\sigma_j}\mid j\in J\}$ has already been constructed. If $W(\mathcal{D}_l)=\emptyset$, we set $\mathcal{D}_{l+1}=\mathcal{D}_l$ and there is again nothing to prove. 

Otherwise, let $J^\ast\subseteq J$ be the set $J^\ast := \{ j\in J \mid C_j^{\sigma_j} \in \mathcal{D}_l^\ast\}$.
Choose an element  $j_0\in J^\ast$  such that $\sigma_{j_0}\in W(\mathcal{D}_l)$. 
By the definition of $W(\mathcal{D}_l)$, $\sigma_{j_0}\neq 0$ and there is $\rho\in \Gamma_K$ such that $B_{\rho}(\sigma_{j_0})\sqsubseteq \mathcal{D}_l^\ast(K)$ and $B_{\rho}(\sigma_{j_0})$ is not contained in a single cell $C_j^{\sigma_j}$ of $\mathcal{D}_l^\ast$. Let ${J'}\subset J^\ast$ be minimal such that 
\[
B_{\rho}(\sigma_{j_0})\subseteq \bigcup_{j\in {J'}} C_j^{\sigma_j}. 
\]
Note that $|{J'}|\geqslant 2$. For each $j\in {J'}$, let $Y_j$ be the subset of $\Gamma_K$ defined by 
\[
Y_j:=\{\gamma\in \Gamma_K \mid B_{\rho}(\sigma_{j_0})\cap C_j^{\sigma_j,\gamma}\neq \emptyset\}. 
\]
Then we have that 
\[
B_{\rho}(\sigma_{j_0})= \bigcup_{j\in {J'}}\bigcup_{\gamma\in Y_j} C_j^{\sigma_j,\gamma}. 
\]
Let $\gamma_{j,1}:=\min\{\gamma: \gamma\in Y_j\}$ and $\gamma_{j,2}:=\max\{\gamma: \gamma\in Y_j\}$. 

\begin{claim}\label{claim:Y} The following equality holds
\[
Y_j = \{\gamma\in \Gamma_K \mid \gamma_{j,1} \ \leqslant \ \gamma \ \leqslant \ \gamma_{j,2} \ \wedge \ \gamma\equiv \ord(\lambda_j)\mod n_j\}. 
\]
\end{claim}

The inclusion from left to right is trivial. For the remaining inclusion let $\gamma\in \Gamma_K$ be an element of the right-hand set. Since for $k=1,2$ the leaves
$C_j^{\sigma_j,\gamma_{j,k}}$ are subsets of  $B_{\rho}(\sigma_{j_0})$, 
the ball $B_{\rho}(\sigma_{j_0})$ must contain the smallest ball containing both leaves. Clearly such a ball contains $C_j^{\sigma_j,\gamma}$, which proves the claim.  

\

By Claim \ref{claim:Y}, we have that 
\begin{equation}\label{eq:cells1}
\bigcup_{j\in J'} C_j^{\sigma_j} = B_\rho(\sigma_{j_0}) \cup \bigcup_{j\in J'} C^{\sigma_j}_{j\ |(\alpha_j,\gamma_{j,1})} \cup \, C^{\sigma_j}_{j\ |(\gamma_{j,2},\beta_j)}.
\end{equation}
Note that some of these cells might be empty. We will now need to distinguish between three cases, indexed as $d=1,2,3$. For each case, one can define a decomposition $\mathcal{E}_{d}$ such that $\mathcal{E}_d(K)=B_\rho(\sigma_{j_0})$ as follows: 

\

\textbf{Case $d=1$:} Suppose that $0\in B_\rho(\sigma_{j_0})$. We will partition this ball as a union of cells $D_i^{0}$ which are centered at $0$. Let $D_0^0$ be the 0-cell $\{0\}$. Choose representatives $\mu_1,\ldots,\mu_{q_K-1} \in K$  such that $ac_1(K^\times)=ac_1(\{\mu_1,\ldots,\mu_{q_K-1}\})$. For $1\leqslant i\leqslant q_K-1$, we define the cells $D_i^0$ as follows:
\[
D_i^0:=\{t\in K\mid \rho-1<\ord(t) \wedge t \in \mu_iQ_{1,1}\}.
\] 
Now put $\mathcal{E}_1:=\{D_i^0\mid i\in\{0,\ldots,q_K-1\}\}$. One can check that $\mathcal{E}_1(K)=B_\rho(\sigma_{j_0})$. 

\

\textbf{Case $d=2$:} Suppose that $0\notin B_\rho(\sigma_{j_0})$, and that there exists $m\in\NN\setminus\{0\}$ such that $\ord(\sigma_{j_0}) =\rho-m$. Let $\lambda\in K$ be such that $B_{\rho}(\sigma_{j_0})$ is equal to the cell centered at zero
\[
E^{0}:=\{t\in K \mid \ord(t)= \rho-m \wedge t\in\lambda Q_{1,m}\}.
\]
If we put $\mathcal{E}_2=\{E^0\}$, then clearly it holds that $\mathcal{E}_2(K)=B_\rho(\sigma_{j_0})$.

\

\textbf{Case $d=3$:} Suppose that $0 \not \in \ B_\rho(\sigma_{j_0})$ and $\rho - \ord(\sigma_{j_0})>m$  for all $m\in\NN$. Since $B_{\rho}(\sigma_{j_0})\sqsubseteq\mathcal{D}_l^*(K)$, there exists $\zeta\in B_{\rho-1}(\sigma_{j_0})\setminus\mathcal{D}_l^*(K)$. In this case we have that 
\begin{equation}\label{eq:case3}
\ord(\zeta)=\ord(\sigma_{j_0})<\rho-m
\end{equation}
for every $m\in\NN$, so in particular $\zeta\neq 0$. Let $\lambda\in K$ be such that $B_{\rho}(\sigma_{j_0})$ is equal to the cell
\[
D^{\zeta}:=\{t\in K \mid \ord(t-\zeta)=\rho-1 \wedge t-\zeta\in\lambda Q_{1,1}\}.
\]
Define $\mathcal{E}_3=\{D^\zeta\}$, which again clearly satisfies $\mathcal{E}_3(K)=B_\rho(\sigma_{j_0})$.

\

Finally define $\mathcal{D}_{l+1}$ as 
\[
\mathcal{D}_{l+1}:=\bigcup_{j\in J\setminus J'} \{C_j^{\sigma_j}\} \cup \bigcup_{j\in J'} \{C^{\sigma_j}_{j\ |(\alpha_j,\gamma_{j,1})}\} \bigcup_{j\in J'} \{C^{\sigma_j}_{j\,|(\gamma_{j,2},\beta_j)}\}\cup \mathcal{E}_d, 
\]
where $d=1,2,3$ depending on the previous case distinction. 

\

The identity (\ref{eq:cells1}) shows that in all three cases, $\mathcal{D}_{l+1}$ is equivalent to $\mathcal{D}_l$. Let us now discuss why $\mathcal{D}_{l+1}$ is pre-admissible. First note that, if a cell $C_j^{\sigma_j}$ satisfies conditions (a)-(c) from Definition \ref{def:admissible}, then any restriction $C^{\sigma_j}_{j\ |(\alpha_j',\beta_j')}$ will also satisfy these conditions. Therefore, since $\mathcal{D}_l$ is pre-admissible, by the definition of $\mathcal{D}_{l+1}$ it suffices to check that the cells in $\mathcal{E}_d$ also satisfy conditions (a)-(c). Suppose first that $d=1$ or $d=2$. In both cases, all cells in $\mathcal{E}_d$ are centered at 0, so they satisfy these conditions by default. Now consider the remaining case, $\mathcal{E}_3=\{D^\zeta\}$. Since $D^\zeta$ is not a 0-cell and $\square_{1}\neq\emptyset$, conditions (a) and (c) are trivially satisfied. For condition (b) one needs to check that $\ord(\zeta)\leqslant\rho-2$, but this follows immediately from \eqref{eq:case3}. Hence, $\mathcal{D}_{l+1}$ is pre-admissible. 
\\\\It remains to show that $|W(\mathcal{D}_{l+1})|\leqslant |W(\mathcal{D}_l)|-1$. 

\begin{claim} \label{claim}
$W(\mathcal{D}_{l+1})\subseteq W(\mathcal{D}_l)$. 
\end{claim}
Let $\sigma\in W(\mathcal{D}_{l+1})$, and let $\delta\in \Gamma_K$ be such that $B_\delta(\sigma) \sqsubseteq \mathcal{D}_{l+1}^\ast(K)$ and $B_\delta(\sigma) $ is not contained in a single cell of $\mathcal{D}_{l+1}^\ast(K)$. We split in cases: 

\

\textbf{Case $d=1$ and $d=2$:} In both cases, $\mathcal{E}_d$ only consists of cells centered at 0. Therefore, $\mathcal{D}_{l+1}^\ast=(\mathcal{D}_{l+1}\setminus\mathcal{E}_d)^\ast$, which implies that  
\begin{equation}\label{eq:cells3}
B_\delta(\sigma)\subseteq \bigcup_{j\in J^{*}\setminus J'} C_j^{\sigma_j} \cup \bigcup_{j\in J'} C^{\sigma_j}_{j\ |(\alpha_j,\gamma_{j,1})} \bigcup_{j\in J'} C^{\sigma_j}_{j\,|(\gamma_{j,2},\beta_j)}. 
\end{equation}
Suppose first that there exists a single $j\in J'$ such that 
\begin{equation}\label{eq:cells2}
B_{\delta}(\sigma)\subseteq C^{\sigma_{j}}_{j\,|(\alpha_{j},\gamma_{j,1})} \cup C^{\sigma_{j}}_{j\,|(\gamma_{j,2},\beta_{j})}. 
\end{equation}
Our assumption on $B_{\delta}(\sigma)$ implies that $B_{\delta}(\sigma)$ intersects both cells on the right hand-side of (\ref{eq:cells2}). This situation cannot occur, since $B_{\delta}(\sigma)$ would then necessarily intersect leaves $C_{j}^{\sigma_{j},\gamma}$ with $\gamma_{j,1}\leqslant\gamma\leqslant\gamma_{j,2}$ as well, but these are not part of the union on the right hand-side of (\ref{eq:cells2}). 
Hence, the ball $B_\delta(\sigma)$ must have non-zero intersection with at least two cells that already occurred in the decomposition $\mathcal{D}_l^*$, 
which means that $\sigma\in W(\mathcal{D}_l)$. This completes this case. 

\

\textbf{Case $d=3$:} By construction, we have that
\[
\Cen(\mathcal{D}_{l+1}^\ast)\subseteq \Cen(\mathcal{D}_l^\ast)\cup\{\zeta\}.
\] 
Note that $\zeta\notin \mathcal{D}_l^*(K)=\mathcal{D}_{l+1}^*(K)$, where the equality holds since we only added or altered cells with non-zero centers, for which $\square_1 = \square_2 = <$. Therefore we must have that $\sigma\neq\zeta$, hence $\sigma\in \Cen(\mathcal{D}_l^\ast)$. It suffices to show that $B_\delta(\sigma)\cap D^{\zeta}=\emptyset$. Indeed, if this intersection is empty, then the inclusion (\ref{eq:cells3}) will hold since $B_\delta(\sigma) \sqsubseteq \mathcal{D}_{l+1}^\ast(K)$, and we can conclude as in case 1. Suppose for a contradiction that $B_\delta(\sigma)\cap D^{\zeta}\neq\emptyset$. Recall that by construction, $D^{\zeta}=B_{\rho}(\sigma_{j_0})$ is a ball. Therefore, since no cell in $\mathcal{D}_{l+1}^\ast$ contains $B_\delta(\sigma)$ as a subset, we must have that $D^{\zeta}\subsetneq B_\delta(\sigma)$. This in turn implies that $B_{\rho-1}(\sigma_{j_0})\subseteq B_{\delta}(\sigma)$. Now since $\zeta\in B_{\rho-1}(\sigma_{j_0})$, the previous inclusion contradicts that $\zeta\notin \mathcal{D}_{l+1}^*(K)$. This completes the claim. 

\


It follows from Claim \ref{claim} that $|W(\mathcal{D}_{l+1})|\leqslant |W(\mathcal{D}_l)|$. We show that $\sigma_{j_0}\notin W(\mathcal{D}_{l+1})$, which will imply that $|W(\mathcal{D}_{l+1})|\leqslant |W(\mathcal{D}_l)|-1$, since by assumption $\sigma_{j_0}\in W(\mathcal{D}_l)$. Again we split in cases. Suppose first that $d=1$ or $d=2$. In both cases, $\sigma_{j_0}$ is contained in a cell of $\mathcal{E}_d$, and hence cannot be contained in a cell of $\mathcal{D}_{l+1}^\ast$. For case $d=3$, suppose towards a contradiction that there is some $\delta\in\Gamma_K$ witnessing that $\sigma_{j_0}\in W(\mathcal{D}_{l+1})$. If $\delta\geqslant\rho$, then the ball $B_\delta(\sigma_{j_0})$ would be contained in $D^{\zeta}$, and since $D^\zeta\in\mathcal{D}_{l+1}^\ast$, this contradicts the assumption that $\sigma_{j_0}\in W(\mathcal{D}_{l+1})$. If $\delta<\rho$, then $\zeta\in B_{\delta}(\sigma_{j_0})\sqsubseteq \mathcal{D}_{l+1}^\ast(K)$, which contradicts that $\zeta\notin \mathcal{D}_{l+1}^*(K)$.   
\end{proof}

\begin{proof}[Proof of Theorem \ref{thm:admissible}]
This is an immediate consequence of Lemmas \ref{lem:goodcells} and \ref{lem:admissible}. 
\end{proof}



\section{Refinement of the decomposition} \label{sec:refinement}

In \cite{cubi-leen-2015}, two of the authors proved a weak, but unconditional version of cell decomposition for $P$-minimal structures. The building blocks used in that theorem are closely related to (classical) cells, but have a far more complex structure.
As a first step towards the main result of this paper, we will restate this version (using slightly different terminology) and consider some refinements of it, which will lead to Theorem \ref{thm:refinement}.
This theorem will be used as a basis for further improvements in later sections, where we will step by step reduce the complexity of the sets involved. We first need the following notations and definitions. 
\\\\
Let $S$ be a parameter set and $\Sigma \subseteq S \times K^r$ be a definable set. 
For each $i =1, \ldots, r$, we write $\Sigma^{(i)}$ for the projection
\[ \Sigma^{(i)}:=  \{ (s, c) \in S \times K \mid \exists \zeta_k: (s, \zeta_1, \ldots, \zeta_{i-1},c, \zeta_{i+1}, \ldots, \zeta_r) \in \Sigma\},\]
and $\Sigma^{(i)}_s$ for its fibers $(\Sigma^{(i)})_s$.

\begin{definition}\label{def:multi-cell} Let $C_1, \ldots, C_r$ be cell conditions and $\Sigma \subseteq S \times K^r$ be a definable set. The pair $\mathcal{A}=(\{C_i\}_{1\leqslant i\leqslant r}, \Sigma)$ is called a \emph{multi-cell} if the following conditions hold:
\begin{itemize}
\item[(i)] Every section $\sigma: s \mapsto (\sigma_1(s), \ldots, \sigma_r(s))$ of $\Sigma$ induces the same set $X$, where \[X =  C_1^{\sigma_1} \cup \ldots \cup C_r^{\sigma_r}.\]
We say that $X$ is \emph{the set defined or induced by $\mathcal{A}$}, and we also denote it by $\mathcal{A}(K)$.  
\item[(ii)] For every section $\sigma$ of $\Sigma$, the induced potential cells $C_1^{\sigma_1}, \ldots, C_r^{\sigma_r}$ are all disjoint.
\end{itemize}
The multi-cell $\mathcal{A}$ is called \emph{admissible} if for every section $\sigma$ of $\Sigma$ and every $s\in S$, the fibers $C_i^{\sigma_i(s)}$ form an admissible decomposition of $X_s$.  
\end{definition}
We want to stress that the partition in part (i) of the above definition depends on the choice of section, and that different sections of $\Sigma$ will in general induce different partitions of $X$.
If $\mathcal{A}$ is a multi-cell and $X=\mathcal{A}(K)$, then clearly $X$ is a definable subset of $S\times K$. 
As is common practice in model theory, we will also refer to the set $X$ \emph{itself} as a multi-cell, by which we mean that there exists some multi-cell $\mathcal{A}$ such that $X=\mathcal{A}(K)$. 
\\\\
The cell decomposition theorem from \cite{cubi-leen-2015} can then be stated in the following way:

\begin{theorem}\label{thm:cellpartition} Let $X\subseteq S\times K$ be a definable set. There exists a finite partition of $X$ into multi-cells. 
\end{theorem}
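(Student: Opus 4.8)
The plan is to deduce Theorem~\ref{thm:cellpartition} from the cell decomposition theorem of \cite{cubi-leen-2015} by a translation of vocabulary, followed by one refinement step that installs the disjointness required by clause~(ii) of Definition~\ref{def:multi-cell}.

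First I would recall the precise statement of the main result of \cite{cubi-leen-2015} and read off from it a finite partition of $X\subseteq S\times K$ into pieces $X'$, each of which is the set induced by finitely many cell conditions $C_1,\dots,C_r$ over $S$ together with a definable family of admissible tuples of centers for them. Packaging that family as a single definable set $\Sigma\subseteq S\times K^{r}$, the defining property of these building blocks is exactly that for every section $\sigma=(\sigma_1,\dots,\sigma_r)$ of $\Sigma$ the union $C_1^{\sigma_1}\cup\dots\cup C_r^{\sigma_r}$ equals $X'$, which is clause~(i). If the formulation in \cite{cubi-leen-2015} does not literally produce the centers in this shape, one defines $\Sigma$ to be the set of \emph{all} tuples of centers that induce the given piece $X'$ and checks that $\Sigma$ is definable, using that $X'$ is.

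Next I would arrange clause~(ii). For a fixed section the potential cells $C_i^{\sigma_i}$ need not be pairwise disjoint, so I refine: working over the fibres, for each $s$ one replaces the finite family $\{C_i^{\sigma_i(s)}\}_i$ covering $X'_s$ by a refinement into pairwise disjoint cells. The key observation is that the mutual position of two such cells — disjoint, or one contained in a single leaf of the other, or sharing the same family of leaves — is controlled only by the definable data $\alpha_i,\beta_i,\lambda_i,n_i,m_i$ and the radius of the ball around $\sigma_i(s)$ that serves as the true center; so the configuration type is a definable condition on $s$ and, after a further finite partition of the base $S$, is constant, and the recipe for splitting into disjoint cells (intersecting bounds, passing to finer residue classes $Q_{n,m}$, peeling off individual leaves) then produces objects of the same shape for every section simultaneously. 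Each resulting piece is again induced by finitely many cell conditions over a piece of $S$ together with a definable center set, i.e.\ a multi-cell; replacing each original $X'$ by finitely many such finer pieces keeps the whole collection a finite partition of $X$.

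The delicate point — the main obstacle — is the uniformity in this refinement step: one must make the passage to pairwise disjoint induced cells work simultaneously for all $s\in S$ and for every section of $\Sigma$, so that the output genuinely satisfies both clauses of Definition~\ref{def:multi-cell} rather than merely being correct for one chosen section. This is where one must exploit the ``uniform'' nature of the building blocks of \cite{cubi-leen-2015} — a bound on the number of cells involved and on the complexity of their relative position — possibly after partitioning $S$. If the theorem of \cite{cubi-leen-2015} is already phrased with pairwise disjoint pieces, this step is vacuous and Theorem~\ref{thm:cellpartition} reduces to the bare change of terminology of the previous paragraph. I would also note that no admissibility is demanded here: upgrading ``multi-cell'' to ``admissible multi-cell'' is done in the next section by applying Theorem~\ref{thm:admissible} fibrewise.
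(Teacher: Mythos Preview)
Your approach is essentially the paper's: Theorem~\ref{thm:cellpartition} is presented there with no proof at all, merely as a restatement of the main result of \cite{cubi-leen-2015} in the terminology of multi-cells. Your refinement step for clause~(ii) is unnecessary: the building blocks of \cite{cubi-leen-2015} are obtained by applying semi-algebraic cell decomposition fibrewise (this is made explicit in the paper's remark just after Theorem~\ref{thm:goopartition}, where the proof of Theorem~\ref{thm:cellpartition} is described as running a semi-algebraic cell decomposition on each fibre), and semi-algebraic cell decomposition already produces a \emph{partition}, so disjointness comes for free. You correctly anticipated this possibility in your hedge, so your proposal contains the paper's argument as a special case.
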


We can now state a first refinement of Theorem \ref{thm:cellpartition}. Its proof is a word-for-word analogue of the proof of Theorem \ref{thm:cellpartition}, in which we replace each semi-algebraic cell decomposition by an admissible decomposition using Theorem \ref{thm:admissible}.
 
\begin{theorem}\label{thm:goopartition} Let $X \subseteq S \times K$ be a definable set. There exists a finite partition of $X$ into admissible multi-cells. 
\end{theorem}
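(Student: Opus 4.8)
The plan is to follow the proof of Theorem~\ref{thm:cellpartition} from \cite{cubi-leen-2015} essentially verbatim, making a single substitution at the step where one-dimensional semi-algebraic cell decompositions are invoked. Recall that the proof of Theorem~\ref{thm:cellpartition} proceeds by induction on the number of $K$-variables of $X$ (or, in the two-sorted setting, by a suitable induction that peels off one $K$-variable at a time): writing $X \subseteq S' \times K \times K$ with the last coordinate playing a distinguished role, one applies $P$-minimal cell decomposition fiberwise in the last variable, obtaining for each parameter point a cell decomposition of a subset of $K$, and then uses definability and compactness to assemble these into finitely many multi-cells over $S' \times K$; the induction hypothesis is then applied to the projections. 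The only place where an \emph{arbitrary} cell decomposition of a subset of $K$ is chosen is precisely this fiberwise step.

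First I would replace that fiberwise application of semi-algebraic cell decomposition by an application of Theorem~\ref{thm:admissible}, which guarantees that each fiber $X_s \subseteq K$ can be given an \emph{admissible} cell decomposition. The point is that admissibility (conditions (a)--(d) of Definition~\ref{def:admissible}) is a property of the decomposition of a subset of $K$, phrased entirely in terms of the cell conditions and centers; condition (a) refers to $X \setminus \Int(X)$, conditions (b), (c) refer to the numerical data $\alpha_i, \ord\sigma_i, \square_{i,1}, \lambda_i$, and condition (d) refers to the combinatorial incidence data $W(\mathcal{C})$. All of these are definable conditions on the (finitely many) parameters describing a cell decomposition, so by compactness and the usual definable-choice-free bookkeeping already present in the proof of Theorem~\ref{thm:cellpartition}, the uniform family of admissible decompositions obtained fiberwise organizes into finitely many multi-cells $\mathcal{A} = (\{C_i\}_{1\leqslant i \leqslant r}, \Sigma)$ over $S' \times K$ with the additional feature that, for every section $\sigma$ of $\Sigma$ and every $s$, the fibers $C_i^{\sigma_i(s)}$ form an admissible decomposition of $X_s$ --- that is, each resulting multi-cell is admissible in the sense of Definition~\ref{def:multi-cell}.

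Second, I would check that the rest of the argument --- the passage from multi-cells over $S' \times K$ back down to $S'$ via the induction hypothesis, and the reassembly of the pieces into a finite partition of the original $X$ --- goes through unchanged, since these steps manipulate only the projections $\Sigma^{(i)}$ and the parameter structure, and never re-choose the fiberwise decomposition in the last $K$-variable. Hence the output is a finite partition of $X$ into admissible multi-cells.

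The main point requiring care --- though it is not a genuine obstacle, which is why the authors describe the proof as a ``word-for-word analogue'' --- is verifying that admissibility is preserved by all the definable operations performed during reassembly: in particular, that intersecting the pieces of a multi-cell decomposition with definable subsets of the parameter space, and that the union/refinement steps used to make the multi-cells of a partition pairwise compatible, do not destroy properties (a)--(d) on the fibers. Since each fiber of a piece is just a (definable) subset of a fiber of $X$ together with the \emph{inherited} admissible decomposition restricted to it, and since Lemma~\ref{lem:admissible}-type stability arguments (e.g.\ the observation in its proof that restricting a cell $C^{\sigma_j}$ to a subinterval preserves conditions (a)--(c)) already show admissibility is robust under the relevant restrictions, this verification is routine. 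The upshot is that Theorem~\ref{thm:goopartition} follows by mechanically upgrading ``cell decomposition'' to ``admissible cell decomposition'' throughout the proof of Theorem~\ref{thm:cellpartition}.
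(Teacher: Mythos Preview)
Your proposal is correct and follows exactly the approach the paper takes: the paper's own proof is a single sentence stating that it is a ``word-for-word analogue of the proof of Theorem~\ref{thm:cellpartition}, in which we replace each semi-algebraic cell decomposition by an admissible decomposition using Theorem~\ref{thm:admissible}.'' You have supplied considerably more detail than the paper does, but the strategy is identical.
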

 
Theorem \ref{thm:refinement} will be a refinement of the above theorem. In order to state it, we need the following definitions first. 

\begin{definition}\label{def:clustered_cell} A set $X\subseteq S\times K$ is a \emph{classical cell} if there exist a cell condition $C$ over $S$, and a definable function $\sigma:S\to K$ such that $X=C^\sigma$. 

The set $X$ is a \emph{clustered cell} if there exist a cell condition $C$ over $S$, and a definable set $\Sigma\subseteq S\times K$ such that $X=C^\Sigma$ and the following holds:
\begin{itemize}
\item[(1)] $C$ is a 1-cell condition over $S$, and both $\square_1$ and $\square_2$ denote $<$.
\item[(2)] For any potential center $\sigma: S \to K$, the condition $\ord \sigma(s) \leqslant \alpha(s)$ holds for all $s\in S$.
\item[(3)] If $\sigma, \sigma': S \to K$ are potential centers, then $\ord \sigma(s) = \ord \sigma'(s)$. 
\item[(4)] Whenever $c \in \Sigma_s$, the set $\Sigma_s$ also contains all $ c' \in K$ such that 
\[
(\forall t) (C(s,c,t)\leftrightarrow C(s,c',t)).
\qedhere\]
\end{itemize} 
\end{definition}
\noindent Note that a clustered cell $X=C^\Sigma$ may also be a classical cell, provided that $\Sigma$ has a definable section. Further, remark that conditions $(1)$ and $(2)$ imply that the potential cells $C^{\sigma}$ induced by $C^{\Sigma}$ satisfy the conditions outlined in the definition of pre-admissibility.

Another remark is that, even though the above definition includes some conditions on $\Sigma$, it still leaves the structure of the set $\Sigma$ quite unspecified. Condition (4) imposes that each $\Sigma_s$ is a union of balls, but at this point we do not yet require this to be a finite union. In Section \ref{sec:separation}, the structure of this set will be discussed in more detail.

\begin{remark}\label{rem:order} Let $X = C^\Sigma$  be a clustered cell and $\sigma: S \to K$ a section of $\Sigma$. The condition that 
\(
\order \sigma(s) \leqslant \alpha(s)
\)
enforces that $\ord (t-\sigma(s)) > \min\{\ord t, \ord \sigma(s)\}$, and hence that \[\order t=\order\sigma(s)\]for all $t \in C^{\sigma(s)}$. 
\end{remark}

\begin{definition}\label{def:array}  Let $\mathcal{A} = (\{C_i\}_{1\leqslant i\leqslant r}, \Sigma)$ be a multi-cell with induced set $X=\mathcal{A}(K)$. We say that $\mathcal{A}$ is a \emph{cell array} if the following additional properties hold:
\begin{itemize}
\item[(i)] For every $i =1, \ldots, r$, the set $C_i^{\Sigma^{(i)}}$ is a clustered cell.
\item[(ii)] For every section $\sigma=(\sigma_1,\ldots,\sigma_r)$ of $\Sigma$ and all $s \in S$, we have that $\ord \sigma_i(s) = \ord \sigma_j(s)$ for $1 \leqslant i \leqslant j \leqslant r$.
\item[(iii)] All centers are non-zero, i.e.  $0 \not \in \Sigma^{(i)}_s$ for any $1 \leqslant i \leqslant r$ and $s \in S$.
\item[(iv)] For every $i = 1, \ldots, r$, let $\rho_{i, \max}(s)$ denote the height of the top leaf of $C_i$. For any section $\sigma_i$ of $\Sigma^{(i)}$, any $s \in S$ and any ball $B \subseteq X_s$ such that $\sigma_i(s) \in B$, it holds that \( B \subseteq B_{\rho_{i, \max}(s) +1 }(\sigma_i(s))\). 
\qedhere\end{itemize}
\end{definition}The last condition in this definition is a slight weakening of the admissibility condition $(d)$ from Definition \ref{def:admissible} in the previous section. This condition will play an important role in our proofs in later sections. The connection between both notions will be explained further in the proof of Theorem \ref{thm:refinement}. 
\\\\
 Similar to the case of multi-cells, we will refer to both $\mathcal{A}$ and its induced set $X = \mathcal{A}(K)$ as cell arrays.

The following notation will be used for both multi-cells and arrays. 
Let $\mathcal{A}=(\{C_i\}_{i\in I},\Sigma)$ be a multi-cell over $S$ and $S_1,\ldots, S_l$ a partition of $S$. For each $1\leq j\leq l$, we define $\mathcal{A}_{|S_j}$ to be the multi-cell over $S_j$ defined by $\mathcal{A}_{|S_j}:=(\{C_i,\}_{i\in I},\Sigma_{|S_j})$, where $\Sigma_{|S_j}:=\{(s,c)\in \Sigma \ \mid \ s\in S_j\}$. It is not hard to check that each $\mathcal{A}_{|S_j}$ is still a multi-cell, and that admissibility is preserved as well. Similarly, if $\mathcal{A}$ is a cell array, then so is $\mathcal{A}_{|S_j}$. 

Note that the cell conditions of $\mathcal{A}_{|S_j}$ are the same as the ones in the original array $\mathcal{A}$, and that no new potential centers were introduced in this procedure. Moreover, the sets $\mathcal{A}_{|S_j}(K)$ form a partition of $\mathcal{A}(K)$, and if $\mathcal{A}(K) = X$, then $\mathcal{A}_{|S_j}(K) = X_{|S_j}$.
\\\\
We will now state the main theorem of this section:

\begin{theorem} \label{thm:refinement} Let $X\subseteq S\times K$ be a definable set. There exists a partition of $X$ into sets $X_1,\ldots,X_n$ such that each $X_i$ is either a classical cell or a cell array.
\end{theorem}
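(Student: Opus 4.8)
The plan is to start from the partition of $X$ into admissible multi-cells provided by Theorem~\ref{thm:goopartition}, and then refine each admissible multi-cell $\mathcal{A}=(\{C_i\}_{1\le i\le r},\Sigma)$ individually; since $\mathcal{A}\mapsto\mathcal{A}_{|S_j}$ keeps admissible multi-cells admissible and partitions $\mathcal{A}(K)$ compatibly, we may freely partition the parameter set $S$ (and, as $S$ is allowed to carry $\Gamma_K$-variables, even enlarge it by auxiliary value-group coordinates). Sort the cell conditions into a set $I_0$, containing the $0$-cell conditions together with all $1$-cell conditions all of whose potential centers vanish (by admissibility this includes every condition with $\square_{1,i}=\emptyset$), and a set $I_1$, containing the $1$-cell conditions with $\square_{1,i}=<$ having a nonzero potential center; after partitioning $S$ according to whether $0\in\Sigma^{(i)}_s$ we may assume every condition in $I_1$ has \emph{all} potential centers nonzero. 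Within $I_1$, write $I_1^{=}=\{i:\square_{2,i}=<\}$ and $I_1^{\emptyset}=\{i:\square_{2,i}=\emptyset\}$.

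For $i\in I_0$ one checks that $C_i^{\Sigma^{(i)}}$ is a finite union of classical cells: when the center is $0$ this is immediate since $C_i^{\Sigma^{(i)}}=C_i^{0}$ with the definable constant center $0$; when $C_i$ is a $0$-cell condition, admissibility~(a) places its centers in $X_s\setminus\Int(X_s)$, and a partition of $S$ lets one pick them out by finitely many definable functions, producing finitely many classical $0$-cells. For $i\in I_1^{\emptyset}$ one must eliminate $\square_{2,i}=\emptyset$, which a clustered cell does not allow. Here one uses that for such a condition $\text{Cl}(C_i^{\sigma_i(s)})\setminus C_i^{\sigma_i(s)}=\{\sigma_i(s)\}$ and $C_i^{\sigma_i(s)}\subseteq B_{\alpha_i(s)+1}(\sigma_i(s))$; combining this with the pairwise disjointness of the potential cells and the fact that they form an admissible decomposition of $X_s$, one determines whether $\sigma_i(s)\in X_s$ — if not, the center is recovered as a definable accumulation point; if so, it lies in another potential cell and refining $S$ and $\Sigma$ yields a definable description — and in either case $C_i^{\Sigma^{(i)}}$ is rewritten as a finite union of classical cells.

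The remaining conditions, those in $I_1^{=}$, are assembled into cell arrays. Conditions~(1),~(2) of Definition~\ref{def:clustered_cell} hold by the shape of these conditions and by admissibility~(b), and replacing each $\Sigma^{(i)}$ by its closure under the equivalence $(\forall t)\,(C_i(s,c,t)\leftrightarrow C_i(s,c',t))$ gives condition~(4) without altering the induced set. To secure condition~(3) of Definition~\ref{def:clustered_cell} and condition~(ii) of Definition~\ref{def:array} one partitions $S$ so that all potential centers of all surviving conditions share a common order function — only finitely many cases occur because definable subsets of $\Gamma_K$ are Presburger — and Remark~\ref{rem:order}, which identifies this order with $\ord t$ on the cell, is what makes the resulting sub-collections still genuine multi-cells. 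This yields conditions~(i) and~(iii) of Definition~\ref{def:array}. There remains condition~(iv): here the admissibility hypothesis $W(\mathcal{C})=\emptyset$ is essential, and condition~(iv) is precisely the weakening of it announced in the text — if a ball $B\subseteq X_s$ contained $\sigma_i(s)$ but were not contained in $B_{\rho_{i,\max}(s)+1}(\sigma_i(s))$, then $B$, or a slightly larger ball around $\sigma_i(s)$, would be a maximal ball of $X_s$ not contained in a single potential cell, contradicting $W(\mathcal{C})=\emptyset$ for $\{C_j^{\sigma_j(s)}\}_j$; care is needed because $W$ is defined through $\mathcal{C}^{\ast}$ rather than the full decomposition and because this must hold for every potential center simultaneously. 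A concluding finiteness count — again controlled by Presburger-definability in $\Gamma_K$ — shows $X$ has been partitioned into finitely many classical cells and cell arrays. I expect the two real obstacles to be the elimination of the conditions with $\square_{2,i}=\emptyset$ and the uniform passage from $W(\mathcal{C})=\emptyset$ to condition~(iv); the order-control is bookkeeping, but must be arranged so as to keep the partition finite.
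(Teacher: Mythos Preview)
Your overall architecture matches the paper's: start from admissible multi-cells (Theorem~\ref{thm:goopartition}), peel off classical cells coming from $0$-cell conditions, zero centers and $\square_2=\emptyset$ conditions (this is Lemma~\ref{lemma:refinement1}), then normalize orders of centers (Lemmas~\ref{lemma:refinement2} and~\ref{lemma:refinement3}), and finally enlarge $\Sigma$ to secure condition~(4) and verify condition~(iv). Two of the steps, however, are not handled correctly.

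\textbf{The order-normalization does not work by partitioning $S$.} Condition~(3) of Definition~\ref{def:clustered_cell} demands that for a fixed $s$, all elements of $\Sigma^{(i)}_s$ have the same valuation. If $\Sigma^{(i)}_s$ contains centers $c,c'$ with $\ord c\neq\ord c'$, no partition of $S$ (nor adjoining $\Gamma_K$-coordinates to $S$) repairs this: the defect lives inside a single fiber of $\Sigma^{(i)}$. What is needed is to \emph{shrink $\Sigma$ itself}. The paper's Lemma~\ref{lemma:refinement2} does this inductively: set $\Sigma_0=\Sigma$ and, at step $l$, keep only those $(s,\sigma)\in\Sigma_{l-1}$ for which $\ord\sigma_l(s)$ is maximal among all choices in $(\Sigma_{l-1})_s$. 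The maximum exists because admissibility~(b) gives $\ord\sigma_l(s)\leqslant\alpha_l(s)$, and bounded-above definable subsets of $\Gamma_K$ have a greatest element (Presburger). One must then check that $(\{C_i\}_i,\Sigma_l)$ is still a multi-cell defining the same set $X$; this holds since only sections are removed. Your appeal to ``finitely many cases because definable subsets of $\Gamma_K$ are Presburger'' does not supply this mechanism. Once each $\ord\sigma_i(s)$ is a well-defined function of $s$, a genuine finite partition of $S$ by the order relations among the $\ord\sigma_i(s)$, followed by splits by projection justified through Remark~\ref{rem:order}, delivers condition~(ii) of Definition~\ref{def:array}; this is Lemma~\ref{lemma:refinement3}, and is the part your mention of Remark~\ref{rem:order} correctly anticipates.

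\textbf{For $i\in I_1^{\emptyset}$, the ``if so'' case never occurs.} Once all $0$-cell conditions have been removed, every fiber $X_s$ is a disjoint union of the open sets $C_j^{\theta_j(s)}$. If $\theta_i(s)\in X_s$ then $\theta_i(s)\in C_j^{\theta_j(s)}$ for some $j\neq i$; openness gives a ball around $\theta_i(s)$ inside $C_j^{\theta_j(s)}$, which meets $C_i^{\theta_i(s)}$ since $\theta_i(s)\in\text{Cl}(C_i^{\theta_i(s)})$, contradicting disjointness. So $\theta_i(s)\notin X_s$ always, and the paper then shows that the \emph{set} $\{\theta_i(s):i\in I_1^{\emptyset}\}$ is independent of the chosen section; hence each $\Sigma^{(i)}_s$ has at most $|I_1^{\emptyset}|$ elements and Lemma~\ref{lem:finiteskolem} yields a definable section, allowing a split by definable choice. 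Your accumulation-point route for the ``if not'' case is viable (since $\text{Cl}(X_s)\setminus X_s$ is definable with uniformly bounded fibers), but the case split is unnecessary, and the ``if so'' branch you leave vague would have been a real obstacle had it actually arisen.
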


\subsection{Splitting multi-cells}

For the remainder of the article we will write $(\{C_i\}_{i}, \Sigma)$ as shorthand for $(\{C_i\}_{1\leqslant i\leqslant r}, \Sigma)$ whenever $r$ is clear from the context. Multi-cells will be assumed to be admissible unless otherwise stated.



\begin{definition}\label{def:split}
 Let $\Sigma$ be a definable subset of $S \times K^r$, and let $1 \leqslant k < r$.  Define the following coordinate projections of $\Sigma$:
\begin{align*}
 \Sigma^{(1, \ldots, k)}&:= \{ (s, c) \in S \times K^k \mid \exists \zeta_i \in K: (s, c, \zeta_{k+1}, \ldots, \zeta_r) \in \Sigma\}.\\
  \Sigma^{(k+1, \ldots, r)}&:= \{ (s, c) \in S \times K^{r-k} \mid \exists \zeta_i \in K: (s, \zeta_{1}, \ldots, \zeta_k, c) \in \Sigma\}.
\end{align*}
 Let $\mathcal{A}=(\{C_i\}_i,\Sigma)$ be a multi-cell with $\mathcal{A}(K) = X$.  If the sets
\begin{equation}\label{eq:split}
X^{(1, \ldots, k)} := \bigcup_{\substack{\sigma \text{ section} \\ \text{of } \Sigma^{(1, \ldots, k)}}} C_1^{\sigma_1} \cup \ldots \cup C_k^{\sigma_k} \text{ \quad and \quad } X^{(k+1, \ldots, r)} := \bigcup_{\substack{\sigma' \text{ section} \\ \text{of } \Sigma^{(k+1, \ldots, r)}}} C_{k+1}^{\sigma'_{k+1}} \cup \ldots \cup C_r^{\sigma'_r}
\end{equation}
are disjoint, then we say that $\mathcal{A}$ \emph{can be split at $k$ (by projection)}: if we consider the multi-cells \[\mathcal{A}^{(1,\ldots, k)} := (\{C_1, \ldots, C_k\}, \Sigma^{(1, \ldots, k)}) \text{\quad and \quad} \mathcal{A}^{(k+1, \ldots r)}:= (\{C_{k+1}, \ldots, C_r\}, \Sigma^{(k+1, \ldots, r)}),\]
then the sets $\mathcal{A}^{(1, \ldots, k)}(K)$ and $\mathcal{A}^{(k+1, \ldots, r)}(K)$ form a partition of $\mathcal{A}(K)$. 
\end{definition}
\noindent Note that in the above definition, condition \eqref{eq:split} ensures that $\mathcal{A}^{(1, \ldots, k)}$ and $\mathcal{A}^{(k+1, \ldots, r)}$ are multi-cells. Further, remark that $\mathcal{A}^{(1, \ldots, k)}(K) = X^{(1, \ldots, k)}$ and $\mathcal{A}^{(k+1, \ldots, r)}(K) = X^{(k+1, \ldots, r)}$.  \\\\For example, if for every section $\sigma$ of  $\Sigma^{(1)}$, $C_1^{\sigma}$ defines the same set, then $\mathcal{A}$ splits at 1.

\begin{definition} \label{def:defsplit}
We say that a multi-cell $\mathcal{A} = (\{C_i\}_i, \Sigma)$ \emph{splits at $k$ by definable choice} if there exists a definable section $\sigma_k: S \to K$ of $\Sigma^{(k)}$. Then $\mathcal{A}(K)$ partitions as the union of the classical cell $C_k^{\sigma_k}$ and the multi-cell $(\{C_1, \ldots, C_{k-1}, C_{k+1}, \ldots, C_r\}, \Sigma')$, where
\[
\Sigma' = \{(s, \zeta_1, \ldots, \zeta_{k-1}, \zeta_{k+1}, \ldots, \zeta_r) \in S \times K^{r-1} \mid (s, \zeta_1, \ldots, \zeta_{k-1},  \sigma_k(s), \zeta_{k+1}, \ldots, \zeta_r) \in \Sigma\}.
\qedhere\]
\end{definition}

Note that both these splitting procedures preserve admissibility for multi-cells. The same procedures can also be applied to cell arrays, to obtain a partitioning in smaller cell arrays (and classical cells).  

The following lemma, originally proven by Denef for semi-algebraic sets, will be used in later proofs.

%
\begin{lemma}[Denef, \cite{denef-84}]\label{lem:finiteskolem} Let $(K,\cL_2)$ be a $P$-minimal structure. Let $X\subseteq S\times K^l$ be a definable set and $k$ a positive integer such that for every $s\in S$ the fiber $X_s$ has less than $k$ elements. Then there exists a definable section $g :
S \to K^l$ of $X$, that is, $g(s)\in X_s$ for all $s\in S$. 
\end{lemma}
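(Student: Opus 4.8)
\textbf{Plan for proving Lemma \ref{lem:finiteskolem}.} The statement is the classical ``definable finite choice'' principle for $P$-minimal structures, due to Denef \cite{denef-84}, and the natural route is induction on the number $l$ of field variables, reducing everything to the one-variable case $l=1$. So the plan is: first treat $l=1$, then bootstrap.

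For the base case $l=1$, fix a uniform bound $k$ on the fiber size and proceed by a secondary induction on $k$. Given $X \subseteq S \times K$ with $|X_s| < k$ for all $s$, apply cell decomposition (in the one-variable form, e.g. Theorem \ref{thm:cellpartition} or even ordinary semi-algebraic cell decomposition, applied fiberwise and uniformly in the parameters $s$) to write $X$ as a finite union of cells over $S$. Since each fiber is finite, only $0$-cells can occur: any $1$-cell would have infinite fibers over the $s$ where it is nonempty. A $0$-cell over $S$ is by definition of the form $\{(s,t) \mid s \in S' \wedge t = \sigma(s)\}$ for a \emph{definable} function $\sigma : S' \to K$, so each such cell already provides a definable section on the piece $S'$ of $S$ where it lives. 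Partitioning $S$ according to which cells are nonempty over which parameters, and picking on each piece (in a fixed, definable way — say, ordering the finitely many cell conditions and taking the center of the first nonempty one) yields a definable section $g : S \to K$. The secondary induction on $k$ is really only needed if one wants to be careful about the uniformity of the cell decomposition; the essential point is simply that finite fibers force $0$-cells, and $0$-cells carry definable centers.

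For the inductive step on $l$, write $K^l = K^{l-1} \times K$ and let $\pi : X \to S \times K^{l-1}$ be the projection forgetting the last coordinate, with image $Y := \pi(X)$, a definable subset of $S \times K^{l-1}$. For each $(s, y) \in Y$ the fiber $X_{(s,y)} \subseteq K$ is nonempty and has fewer than $k$ elements, so by the base case (applied with parameter set $Y$) there is a definable section $h : Y \to K$ with $(y, h(s,y)) \in X_{(s,y)}$. Now $Y \subseteq S \times K^{l-1}$ has fibers $Y_s$ of size less than $k$ (indeed less than $k^{l-1}$, or one can just re-run the argument noting each $Y_s$ is finite), so by the induction hypothesis there is a definable section $g' : S \to K^{l-1}$ with $g'(s) \in Y_s$. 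Composing, $g(s) := (g'(s), h(s, g'(s)))$ is a definable section of $X$.

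\textbf{Main obstacle.} The only real subtlety is uniformity: one must ensure that the cell decomposition used in the base case can be carried out uniformly in the parameter $s \in S$ (i.e. that the finitely many cell conditions and their centers are definable over $S$, not chosen separately for each $s$), and that the ``first nonempty cell'' selection is itself definable. This is exactly what $P$-minimal cell decomposition delivers, but it is the point one should state carefully; once it is granted, the reduction of finite fibers to $0$-cells and hence to definable centers is immediate, and the induction on $l$ is routine.
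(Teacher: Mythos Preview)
The paper does not give its own proof of this lemma: it simply cites Denef \cite{denef-84} and uses the result as a black box. So there is no paper-proof to compare against, only the question of whether your argument stands on its own.

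It does not, because of a circularity. Your base case $l=1$ appeals to cell decomposition (you mention Theorem~\ref{thm:cellpartition}) and then asserts that ``a $0$-cell over $S$ is by definition of the form $\{(s,t)\mid t=\sigma(s)\}$ for a \emph{definable} function $\sigma$''. That is the definition of a \emph{classical} $0$-cell, but Theorem~\ref{thm:cellpartition} only produces \emph{multi-cells}: a multi-cell with $0$-cell conditions has centers given by sections of a definable set $\Sigma$, and those sections are not a priori definable. Indeed, look at how the paper itself handles $0$-cell conditions in the proof of Lemma~\ref{lemma:refinement1}: it gets a multi-cell whose fibers $\Sigma_s^{(i)}$ are finite, and then invokes Lemma~\ref{lem:finiteskolem} precisely to extract a definable section. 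Your argument is using, as an ingredient, exactly the step for which the paper needs this lemma. The alternative you offer, ``ordinary semi-algebraic cell decomposition, applied fiberwise and uniformly in $s$'', does not help either: $X$ is $\cL$-definable, not semi-algebraic, and $P$-minimality only tells you that each individual fiber $X_s\subseteq K$ is semi-algebraic, not that the family is uniformly so.

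Denef's actual argument is algebraic and does not go through cell decomposition. One route: the elementary symmetric functions of $X_s$ are definable (uniformly in $s$), so one reduces to definably selecting a root of a polynomial of bounded degree over a $p$-adically closed field; equivalently, one shows that a finite subset of $K$ of bounded size can be definably linearly ordered using the valuation together with coset information for the $n$-th power predicates $P_n$ (or angular components). That selection mechanism lives entirely in the ring language and is what makes the lemma available \emph{before} any $P$-minimal cell decomposition is developed. Your induction on $l$ is fine once the $l=1$ case is in hand, but the $l=1$ case needs this direct argument, not an appeal to the paper's decomposition machinery.
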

We will now show how a multi-cell can be split into smaller parts where the cell conditions involved satisfy further properties. %
\begin{lemma} \label{lemma:refinement1}
Let $\mathcal{A}= (\{C_i\}_{1\leqslant i\leqslant r},\Sigma)$ be a multi-cell over $S$. There exists a partition of $\mathcal{A}(K)$ as $Y_1 \cup Y_2$, such that
\begin{itemize}
\item[(i)] $Y_1$ can be partitioned as a finite union of classical cells;
\item[(ii)] there exist multi-cells $\mathcal{A'} = (\{C_i'\}_{i\in I}, \Sigma')$ over definable sets $S' \subseteq S$, such that the sets $\mathcal{A'}(K)$ form a finite partition of $Y_2$, and 
\begin{enumerate}\item[(a)]  all cell conditions $C_i'$ are 1-cell conditions and have $\square_1 = \square_2 = <$; \item[(b)] for all $s \in S'$ and $i \in I$, we have that $0 \not \in (\Sigma')^{(i)}_s$.
\end{enumerate}
\end{itemize}
\end{lemma}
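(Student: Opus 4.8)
The plan is to process the $r$ cell conditions of $\mathcal{A}$ one at a time, peeling off "bad" behaviour into classical cells and refining the parameter set $S$. Recall that $\mathcal{A}$ is admissible, so for every section $\sigma$ and every $s$, the fibers $C_i^{\sigma_i(s)}$ form an admissible decomposition of $X_s$; in particular they are pre-admissible, which already gives us a lot of control. Fix a single index $i$ and consider the $i$-th cell condition $C_i$, together with its projection $\Sigma^{(i)}$. I will show how to arrange, after a partition of $S$ and after splitting off finitely many classical cells, that $C_i$ becomes a $1$-cell condition with $\square_{i,1}=\square_{i,2}=<$ and that $0\notin(\Sigma')^{(i)}_s$; doing this successively for $i=1,\ldots,r$ and using that the splitting procedures of Definitions \ref{def:split}--\ref{def:defsplit} preserve the multi-cell structure (and that restricting along a partition of $S$ does too) yields the statement.

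The case analysis on $C_i$ proceeds as follows. First, suppose $C_i$ is a $0$-cell condition, i.e. $\lambda_i = 0$, so $C_i^{\sigma_i(s)} = \{\sigma_i(s)\}$ whenever $\sigma_i(s)\in X_s$ (and by pre-admissibility, if $\sigma_i(s)\neq 0$ it lies in $X_s\setminus\Int(X_s)$). By admissibility, all potential cells induced by $C_i^{\Sigma^{(i)}}$ define singletons, hence $C_i^{\Sigma^{(i)}}$ is itself a definable set all of whose fibers over $S$ have at most... — more precisely, the projection of $C_i^{\Sigma^{(i)}}$ to $S\times K$ has fibers that are exactly the union of all these singletons, but the key point is that we may simply use $\Sigma^{(i)}$ itself: since each potential cell is a singleton and they all induce the same set $X$, actually one shows $\mathcal{A}$ splits at the coordinate $i$ by projection (the set $X^{(i)}$ contributed by this coordinate is independent of the section, being $\{(s,\sigma_i(s))\}$ glued over sections), and this coordinate piece $C_i^{\Sigma^{(i)}}$ is a definable set whose $S$-fibers are finite (bounded uniformly, by compactness/admissibility). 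Then Lemma \ref{lem:finiteskolem} provides a definable section, so $C_i^{\Sigma^{(i)}}$ is a finite union of classical cells and can be absorbed into $Y_1$. The $\{0\}$-cell, if present, is in any case a classical cell. Second, suppose $C_i$ is a $1$-cell condition with $\square_{i,1}=\emptyset$: by pre-admissibility (condition (c) of Definition \ref{def:admissible}), applied to the admissible decomposition coming from any section, the corresponding cell is centered at $0$; but then all potential centers give the same cell, so again $\mathcal{A}$ splits at $i$ by definable choice (the constant function $0$), and $C_i^{\sigma_i}$ with $\sigma_i\equiv 0$ is a classical cell going into $Y_1$. Third, suppose $C_i$ is a $1$-cell with $\square_{i,1}=<$ but $\square_{i,2}=\emptyset$: here we partition $S$ according to whether the center is $0$ or not. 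On the part where $\sigma_i(s)=0$ for all potential centers, split off the classical cell as before. On the part where $\sigma_i(s)\neq 0$, pre-admissibility gives $\ord\sigma_i(s)\leqslant\alpha_i(s)$, so we may split $C_i$ vertically: replace it by the union of the $1$-cell with $\square_{i,2}$ made $<$ and upper bound, say, $\ord\sigma_i(s)+1$ (which after shifting the center becomes centered at $0$ — this is exactly the computation done in Lemma \ref{lem:goodcells}), plus a piece with $\square_1=\square_2=<$ and non-zero center. The first of these is (a finite union of) classical cells centered at $0$ after the shift, going into $Y_1$; the second has the desired shape. Finally, in the generic case $C_i$ is already a $1$-cell with $\square_{i,1}=\square_{i,2}=<$; we only need $0\notin(\Sigma')^{(i)}_s$, which we get by partitioning $S$ according to whether the (unique-up-to-ball) center is $0$, splitting off the classical cell $\{0\}$-centered piece into $Y_1$ on the relevant part, and keeping the rest.

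Carrying this out requires one piece of bookkeeping: when I refine $S$ into finitely many definable pieces $S_1,\dots,S_l$ for the sake of coordinate $i$, I must restrict the whole multi-cell $\mathcal{A}$ (via $\mathcal{A}_{|S_j}$, which the excerpt already notes preserves the multi-cell and admissibility properties), process coordinate $i$ on each piece, and then move on to coordinate $i+1$; since each step only ever partitions $S$ into finitely many pieces and splits off finitely many classical cells, after $r$ rounds we are done, with $Y_1$ the (finite) union of all the classical cells peeled off and $Y_2$ the (finite) union of the surviving multi-cells $\mathcal{A}'$, each satisfying (a) and (b). The main obstacle is the bounded-finiteness needed to invoke Lemma \ref{lem:finiteskolem} in the $0$-cell case: one must check that across all of $S$ there is a uniform bound on the number of potential singletons contributed by coordinate $i$ — this follows because admissibility forces every potential cell for coordinate $i$ to be the same singleton after fixing the other coordinates, so in fact the fibre is genuinely finite of size controlled by $r$, but spelling this out cleanly (distinguishing "the coordinate contributes a single point that does not depend on the section" from "it contributes boundedly many") is the delicate bit. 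The other steps are routine vertical-splitting computations of the kind already performed in the proof of Lemma \ref{lem:goodcells}, so I would cite that proof rather than repeat them.
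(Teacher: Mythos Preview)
Your treatment of the case $\square_{i,2}=\emptyset$ has a genuine gap. You propose to ``split $C_i$ vertically'' with upper bound $\ord\sigma_i(s)+1$ and then shift the lower piece to center $0$, citing the computation in Lemma~\ref{lem:goodcells}. But pre-admissibility~(b) gives $\ord\sigma_i(s)\leqslant\alpha_i(s)$, so the interval $(\alpha_i(s),\ord\sigma_i(s)+1)$ is empty and your ``lower piece'' vanishes; the whole cell sits above $\ord\sigma_i(s)$ already, and Lemma~\ref{lem:goodcells} (which handles violations of (b) and (c), i.e.\ cells whose center lies \emph{above} $\alpha$) is not applicable here. More fundamentally, any vertical cut you make would have to be at a height expressed via $\sigma_i(s)$, which is not definable. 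The paper's argument is entirely different and topological: once the $0$-cells are removed, every center $\theta_j(s)$ of a $\square_2=\emptyset$ cell satisfies $\theta_j(s)\in\mathrm{Cl}(X_s)\setminus X_s$ (it lies in the closure of its own cell fiber but, by disjointness and openness of the remaining $1$-cells, not in $X_s$). One then shows that any two sections must yield the \emph{same finite set} of such boundary points; hence each $\Sigma^{(i)}_s$ has at most $k$ elements and Lemma~\ref{lem:finiteskolem} gives a definable section, allowing a split by definable choice. This closure argument is the key idea you are missing.

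Your $0$-cell case is also muddled, though more easily repaired: you write that $\mathcal{A}$ ``splits at coordinate $i$ by projection'', but a single $0$-cell coordinate need not split off by projection when other $0$-cell coordinates are present (the isolated points can be permuted among the $0$-cell indices as the section varies). The paper instead groups \emph{all} $0$-cell conditions together, uses admissibility~(a) to see that $X^{(1,\ldots,k)}$ (isolated points) is disjoint from $X^{(k+1,\ldots,r)}$ (interior), splits at $k$ by projection, and then applies Lemma~\ref{lem:finiteskolem} to the resulting set with fibers of size $\leqslant k$. Alternatively one can observe, as you half-suggest, that $\Sigma^{(i)}_s\subseteq (X_s\setminus\mathrm{Int}(X_s))\cup\{0\}$ is uniformly finite and split by definable choice one coordinate at a time---but then say so, rather than invoking a projection split that does not hold. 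Note also that the paper deliberately removes the possibility $0\in\Sigma^{(i)}_s$ \emph{first}, which makes the $\square_1=\emptyset$ case vacuous by pre-admissibility~(c); your ordering forces you to handle it separately.
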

\begin{proof}
Let $X:=\mathcal{A}(K)$. We will prove the lemma by sequentially partitioning off parts of $X$. 
We begin by isolating those cell fibers for which $0$ is a potential center. Consider the following inductive procedure. First, put 
\[S_{0}:= \{s \in S \mid 0 \in \Sigma_s^{(1)} \},\]
and $S_{1}:= S \backslash S_{0}$. This induces a partition of $X$ with respect to the multi-cells $\mathcal{A}_{|S_{k}}$ for $k=0,1$.

Now, $\mathcal{A}_{|S_{0}}$ admits a split at 1 by definable choice, using the constant function $\sigma_1: S_0 \to K: s \mapsto 0$. Write $\mathcal{A}_{|S_0}'=(\{C_i\}_{2\leqslant i\leqslant r},\Sigma')$ for the multi-cell that remains after the split. The multi-cell $\mathcal{A}_{|S_{1}}$ already has the property that $0 \not \in (\Sigma_{|S_{1}}^{(1)})_s$ for any $s\in S_{1}$. Repeating a similar procedure for all components of $\mathcal{A}_{|S_0}'$ and $\mathcal{A}_{|S_1}$ will yield a finite number of classical cells, and a finite number of multi-cells for which $0$ is not in any of the sets $\Sigma^{(i)}_s$. Hence, we may as well assume from now on that $\mathcal{A}$ itself is a multi-cell satisfying this property.
\\\\
As a next step, we will consider the $0$-cell conditions. Without loss of generality, we may assume that there exists a $k\in \{1,\ldots,r\}$ such that all cell conditions $C_i$ with $1\leq i\leq k$ are 0-cell conditions and all cell conditions $C_i$ with $i>k$ are 1-cell conditions. We need to show that $X$ splits at $k$ (by projection), i.e. that $X_1:= X^{(1 \ldots, k)}$ and $X^{(k+1, \ldots, r)}$ are disjoint sets.
Recall that $(\{C_i\}_i,\Sigma)$ is assumed to be an admissible multi-cell. Now part $(a)$ of Definition \ref{def:admissible} implies the following. If $(s,t)\in X_1 \cap X^{(k+1, \ldots, r)}$, then $t\in X_s\setminus \Int(X_s)$ since $(s,t)\in X_1$. However, $t\in \Int(X_s)$ since $(s,t)\in X^{(k+1, \ldots, r)}$, which is a contradiction. Using Lemma \ref{lem:finiteskolem}, the set $X_1$ can be partitioned into a finite number of classical cells. 
\\\\
 For the next part we work with $X\backslash X_1$ (which we will still call $X$, since we may as well assume that $X_1$ is empty). After reordering if necessary, there exists $k\in \{0,1,\ldots,r\}$ such that all cell conditions $C_i$ with $1\leqslant i\leqslant k$ are precisely those cell conditions for which $\square_1=\emptyset$. Note that part $(c)$ of Definition \ref{def:admissible} implies that $\Sigma^{(1, \ldots, k)} = S\times \{(0,\ldots,0)\}$, which actually implies that $k =0$, since we had assumed that all potential centers for $X$ were non-zero. 
 
 After reordering if necessary, we can find $k\in \{1,\ldots,r\}$ such that all cell conditions $C_i$ with $1\leqslant i\leqslant k$ are precisely those cell conditions for which $\square_2=\emptyset$. Let $\sigma=(\sigma_1,\ldots,\sigma_r)$ and $\theta=(\theta_1,\ldots,\theta_r)$ be two sections of $\Sigma$. 

First note that for any $1 \leqslant j \leqslant k$, we have that  $\theta_j(s) \not \in X_s$. Indeed, suppose for a contradiction that $\theta_j(s) \in X_s$. Because the multi-cell for $X$ does not contain any 0-cell conditions, $X_s$ can be written as a finite disjoint union of open cell fibers $C_i^{\theta_i(s)}$. Note that $\theta_j(s) \in \text{Cl}\left(C_j^{\theta_j(s)}\right) \backslash C_j^{\theta_j(s)}$, and hence there must be some $i \neq j$ such that $\theta_j(s) \in C_{i}^{\theta_{i}(s)}$. Since this cell fiber $C_{i}^{\theta_{i}(s)}$ is open, it must contain a ball $B_\gamma(\theta_j(s))$. But this implies that $C_{i}^{\theta_{i}(s)} \cap C_{j}^{\theta_{j}(s)} \neq \emptyset$, which is a contradiction, so we conclude that $\theta_j(s) \not \in X_s$.

We will show that for every $s$, the sets $\{\theta_1(s),\ldots,\theta_k(s)\}$ and $\{\sigma_1(s),\ldots,\sigma_k(s)\}$ contain the same elements.
If this were not the case, there would exist $s\in S$ and $1\leqslant j\leqslant k$ such that $\theta_j(s)\neq \sigma_i(s)$ for all $1\leqslant i\leqslant k$. 

Since $\theta_j(s) \in \text{Cl}(X_s) \backslash X_s$, the set $X_s$ contains elements $t\in K$ which are arbitrarily close to $\theta_j(s)$. But since $\theta_j(s)  \neq \sigma_i(s)$ for all $1\leq i\leq k$, such a $t$ cannot belong to  $\bigcup_{i=1}^k C_i^{\sigma_i(s)}$. Hence, for any such element $t$, there must exist some $i_0 >k$ such that $t \in C_{i_0}^{\sigma_{i_0}(s)}$. But since $t$ is arbitrarily close to $\theta_j(s)$, and the cell condition $C_{i_0}$ has $\square_2=<$, this implies that $\theta_j(s)\in C^{\sigma_{i_0}(s)}$, which is a contradiction. 

We have now shown that for $1 \leqslant i \leqslant k$, the sets $\Sigma_s^{(i)}$ contain at most $k$ elements. By Lemma \ref{lem:finiteskolem}, there is a definable way to choose an element from these sets uniformly in $s$. In particular, there exists a function $\sigma_1: S \to K$ such that $X$ splits by definable choice as $C_1^{\sigma_1}$ and $(\{C_2, \ldots, C_r\}, \Sigma')$, where $\Sigma'$ is as in Definition \ref{def:defsplit}. 
Applying this procedure $k$ times shows that we can split off $k$ classical cells and be left with a multi-cell satisfying the conditions of $(ii)$. 
\end{proof}

In the next lemma, we will show that one can definably fix the order of the potential centers for every component: 
\begin{lemma} \label{lemma:refinement2}
Let $\mathcal{A} = (\{C_i\}_i, \Sigma)$ be a multi-cell satisfying the conditions in part (ii) of Lemma \ref{lemma:refinement1}.
There exists a multicell $\mathcal{A}' = (\{C_i\}_i, \Sigma')$ with 
 $\Sigma'\subseteq \Sigma$,  such that  
 \begin{itemize}
 \item[(i)] $\mathcal{A}(K)=\mathcal{A}'(K)$; 
 \item[(ii)]  
for all $s \in S$, all
$\sigma(s) = (\sigma_1(s), \ldots, \sigma_r(s)),\ \theta(s) = (\theta_1(s), \ldots, \theta_r(s)) \in \Sigma'_s$, and all $1 \leqslant j \leqslant r$, it holds that 
\[\order \sigma_j(s)=\order\theta_j(s).\]
\end{itemize}

\end{lemma}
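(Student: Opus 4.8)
The plan is to prove Lemma~\ref{lemma:refinement2} by treating each of the $r$ components separately, using definable choice (Lemma~\ref{lem:finiteskolem}) as the main tool to pick out, uniformly in $s$, a single ``valuation level'' for the centers of that component. So fix $i$, and consider the definable set $\Sigma^{(i)}\subseteq S\times K$. For each $s$, set $\Gamma_i(s):=\{\ord c\mid c\in\Sigma^{(i)}_s\}\subseteq\Gamma_K$; this is a definable subset of the value group. By Cluckers' result on $P$-minimal subsets of $\Gamma_K$ (cited already in Section~\ref{sec:semi-alg}), each $\Gamma_i(s)$ is Presburger-definable, and in particular — here is the point I would want to exploit — it is bounded below: every element $c\in\Sigma^{(i)}_s$ is a potential center of the clustered cell $C_i^{\Sigma^{(i)}}$, so by condition~(2) in Definition~\ref{def:clustered_cell} we have $\ord c\leqslant\alpha_i(s)$, and by Remark~\ref{rem:order} all elements of $C_i^{\sigma_i(s)}$ lie at valuation $\ord\sigma_i(s)$; since the induced set $X_s$ is fixed, the possible values $\ord c$ are constrained to finitely many levels modulo $n_i$ within a bounded window. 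Actually the cleanest route: $\Gamma_i(s)$ is a Presburger set bounded above (by $\alpha_i(s)$) and, I claim, bounded below as well — otherwise $C_i^{\Sigma^{(i)}}$ would contain, for wildly small $\ord c$, leaves at arbitrarily small height, contradicting that $X_s=\mathcal{A}(K)_s$ is a fixed set with a fixed bottom bound $\alpha_i$. Hence $\Gamma_i(s)$ is finite of uniformly bounded cardinality, say at most $N$ (this $N$ can be read off from the cell condition $C_i$ and compactness/definable-family arguments).

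Next, I would apply Lemma~\ref{lem:finiteskolem} to the definable set $\{(s,\gamma)\in S\times\Gamma_K\mid \gamma\in\Gamma_i(s)\}$ — more precisely, to its pullback into the field sort via $\ord$ composed with the finiteness just established — to get a definable function $\delta_i:S\to\Gamma_K$ with $\delta_i(s)\in\Gamma_i(s)$ for all $s$. (One can instead note directly that a definable family of finite subsets of $\Gamma_K$ of bounded size admits a definable section, e.g.\ by taking the minimum, which is first-order expressible.) Then define $\Sigma':=\{(s,c_1,\dots,c_r)\in\Sigma\mid \ord c_j=\delta_j(s)\text{ for all }1\leqslant j\leqslant r\}$. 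By construction $\Sigma'\subseteq\Sigma$ and property~(ii) of the lemma holds: any two sections of $\Sigma'$ agree, componentwise, in the valuation of the centers.

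The remaining obligation is property~(i): that $\mathcal{A}'(K)=\mathcal{A}(K)$, i.e.\ that restricting to centers at the chosen valuation levels does not lose any points of $X$. This is where condition~(4) of Definition~\ref{def:clustered_cell} (which holds for each $C_i^{\Sigma^{(i)}}$ since $\mathcal{A}$ is a multi-cell whose components are clustered cells — wait, at this stage $\mathcal{A}$ only satisfies part~(ii) of Lemma~\ref{lemma:refinement1}, so I would instead argue directly) does the work. The key geometric fact is Claim~\ref{claim:Y}-type reasoning: for a 1-cell condition $C_i$ with $\square_1=\square_2=<$ and two centers $c,c'$ with $\ord c=\ord c'$ but lying in different leaves, the induced fibers $C_i^{c(s)}$ and $C_i^{c'(s)}$ are disjoint, whereas if $\ord c\neq\ord c'$ the induced sets can be genuinely different. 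What I need is: for \emph{every} center $c\in\Sigma^{(i)}_s$ there is a center $c'\in\Sigma^{(i)}_s$ with $\ord c'=\delta_i(s)$ and $C_i^{c(s)}=C_i^{c'(s)}$. This should follow because $\Sigma^{(i)}_s$, being the full set of potential centers of a fixed induced set, is closed under the equivalence ``induces the same fiber'' — and a center at valuation $\gamma$ can always be replaced by one at valuation $\delta_i(s)$ inside the same ball $B_{\rho_{i,\max}(s)+m_i}(\cdot)$ because $\delta_i(s)\leqslant\alpha_i(s)<\rho_{i,\max}(s)+m_i$, so any ball realizable at one admissible valuation level is realizable at any other. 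Thus $\bigcup_{c\in\Sigma^{(i)}_s}C_i^{c(s)}=\bigcup_{c\in(\Sigma')^{(i)}_s}C_i^{c(s)}$, and summing over $i$ gives $X_s=\mathcal{A}'(K)_s$.

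The main obstacle I anticipate is precisely this last point: cleanly arguing that throwing away centers at ``wrong'' valuation levels costs nothing, i.e.\ verifying that $\Sigma^{(i)}_s$ contains, for each of its elements, an equivalent one at the distinguished level $\delta_i(s)$. This hinges on understanding that $\Sigma^{(i)}_s$ is exactly the set of all $c$ inducing fibers that appear in the fixed decomposition of $X_s$ — which is guaranteed by the multi-cell axioms (every section of $\Sigma$ induces the \emph{same} $X$, part~(i) of Definition~\ref{def:multi-cell}) — combined with a careful bookkeeping, via $Q_{n_i,m_i}$ and the angular-component data encoded in $\lambda_i$, of which balls at valuation $\delta_i(s)$ can serve as leaves. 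I would organize this as a short sublemma about a single clustered-cell-like component before assembling the $r$ components, and I would lean on the bounded-size definable-choice principle (Lemma~\ref{lem:finiteskolem}) exactly as in the proof of Lemma~\ref{lemma:refinement1}.
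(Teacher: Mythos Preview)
Your proposal misidentifies both the difficulty and the triviality in this lemma, and as written contains a genuine gap.

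First, the part you flag as ``the main obstacle'' --- verifying $\mathcal{A}'(K)=\mathcal{A}(K)$ --- is in fact immediate from the definition of a multi-cell (Definition~\ref{def:multi-cell}(i)): \emph{every} section of $\Sigma$ already induces the same set $X$, so any section of a nonempty $\Sigma'\subseteq\Sigma$ is in particular a section of $\Sigma$ and hence also induces $X$. Your long discussion about replacing a center at one valuation level by an equivalent one at level $\delta_i(s)$ is unnecessary; nothing about individual components $C_i^{\Sigma^{(i)}}$ needs to be analyzed.

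Second, the actual obligation --- which you do not address --- is that your $\Sigma'$ have nonempty fibers. You choose each $\delta_i(s)$ independently from the projection $\Sigma^{(i)}_s$, and then set $\Sigma'_s=\{c\in\Sigma_s\mid \ord c_j=\delta_j(s)\text{ for all }j\}$. But the coordinates of $\Sigma_s$ may be correlated: it is perfectly possible that $\delta_1(s)$ is realized only by tuples whose second coordinate has valuation $\neq\delta_2(s)$, so that $\Sigma'_s=\emptyset$. In that case $\mathcal{A}'$ is not even a multi-cell.

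Third, your detour through finiteness of $\Gamma_i(s)$ and Lemma~\ref{lem:finiteskolem} is both unnecessary and not correctly argued. The claim that $\Gamma_i(s)$ is bounded below (``otherwise \ldots leaves at arbitrarily small height, contradicting that $X_s$ \ldots has a fixed bottom bound $\alpha_i$'') confuses the height of a leaf (which is $\ord(t-c)$, always in $(\alpha_i(s),\beta_i(s))$) with $\ord c$ or $\ord t$; there is no contradiction there. What \emph{is} true, and all that is needed, is that $\Gamma_i(s)$ is bounded \emph{above} by $\alpha_i(s)$ (pre-admissibility condition~(b), since $0\notin\Sigma^{(i)}_s$), so a definable maximum exists by Presburger quantifier elimination.

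The paper's argument is the simple fix to all of this: define inductively $\Sigma_0:=\Sigma$ and
\[
\Sigma_l:=\{(s,\sigma)\in\Sigma_{l-1}\mid \forall(s,\sigma')\in\Sigma_{l-1}:\ \ord\sigma'_l\leqslant\ord\sigma_l\},
\]
i.e.\ at step $l$ keep only those tuples whose $l$-th coordinate has maximal valuation \emph{within the current} $\Sigma_{l-1}$. The maximum exists by the upper bound $\alpha_l(s)$, each $\Sigma_l$ is visibly nonempty, and $\mathcal{A}_l(K)=\mathcal{A}(K)$ is automatic as explained above. Taking $\Sigma':=\Sigma_r$ finishes the proof in a few lines. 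The sequential restriction is exactly what guarantees compatibility across coordinates, which your independent-choice approach lacks.
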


\begin{proof}
Use induction to define a chain of sets $\Sigma_l\subseteq S\times K^r$ for $0 \leqslant l\leqslant r$, with $\Sigma_0:= \Sigma$. Write $(s,\sigma) = (s, \sigma_1(s), \ldots, \sigma_r(s))$ for elements of $\Sigma$. Assuming $\Sigma_{l-1}$ has been defined, set
\[
\Sigma_{l}:=\{(s,\sigma)\in \Sigma_{l-1} \ \mid \ \forall (s,\sigma') \in \Sigma_{l-1}: \ord\, \sigma'_l(s)\leqslant \ord \,\sigma_l(s)\}.
\]

Note that this is well-defined, as by condition (b) of pre-admissibility, $\alpha_l(s)$ is an upper bound for $\ord(\sigma_l(s))$, since $\sigma_l(s) \neq 0$ for the multi-cells we consider in this lemma. Moreover, by Lemma 2 and Theorem 6 of  \cite{clu-presb03}, $P$-minimal definable subsets of $\Gamma_K$ are Presburger-definable, and every such set has a maximal element if it is bounded. 

We leave it to the reader to check that for each $l$, $\mathcal{A}_l:=(\{C_i\}_i,\Sigma_l)$ is indeed a multi-cell. Also, for each $l$, $\mathcal{A}_l(K)=\mathcal{A}(K)$ since the only thing we do in every step is to put restrictions on which centers we allow for each of the components: $\Sigma_1$ will fix the order of $\sigma_1(s)$, then $\Sigma_2$ will pick a subset from $\Sigma_1$ where $\ord\, (\sigma_2(s))$ is fixed, and so on. Note that at no point in the induction, $\Sigma_l$ will be empty. Setting $\mathcal{A}':=\mathcal{A}_r$ completes the proof. 
\end{proof}

\begin{lemma}\label{lemma:refinement3}
Let $\mathcal{A} = (\{C_i\}_i, \Sigma)$ be a multi-cell as obtained in Lemma \ref{lemma:refinement2} with $X=\mathcal{A}(K)$. There exists a finite partitioning of $X$ into sets $X_j \subseteq S_j \times K$ (where the $S_j$ are definable subsets of $S$), such that each part $X_j$ can be written as a finite disjoint union of multi-cells $\mathcal{A}_{jk} = (\{C_{jk,i}\}_i, \Sigma_{jk})$ over $S_j$, and
\[\order \sigma_1(s) =  \ldots = \order \sigma_{r_{jk}}(s)\] for all $(s, \sigma_1(s), \ldots, \sigma_{r_{jk}}(s)) \in \Sigma_{jk}.$
\end{lemma}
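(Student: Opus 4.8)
The goal is to pass from the multi-cell $\mathcal{A}$, in which the order of each individual potential center $\sigma_i(s)$ is fixed but the orders of different components need not agree, to a finite family of multi-cells in which all components of a given multi-cell share a common center order. The natural approach is to partition $S$ according to the pattern of equalities and inequalities among the (now definable-up-to-reordering) values $\ord\sigma_1(s),\dots,\ord\sigma_r(s)$, and then regroup the cell conditions within each piece according to which components have the same center order.

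\textbf{Key steps.} First, by Lemma \ref{lemma:refinement2} we may assume $\mathcal{A}=(\{C_i\}_i,\Sigma)$ satisfies condition (ii) of that lemma, so for each fixed $i$ the function $s\mapsto \ord\sigma_i(s)$ is the same for every section $\sigma$ of $\Sigma$; call this definable function $o_i:S\to\Gamma_K$. Second, I would partition $S$ into finitely many definable pieces $S_j$ according to the $\Gamma_K$-definable equivalence relation "$o_i(s)=o_{i'}(s)$" — concretely, for each equivalence relation $\sim$ on $\{1,\dots,r\}$ let $S_\sim:=\{s\in S\mid (\forall i,i')(o_i(s)=o_{i'}(s)\Leftrightarrow i\sim i')\}$; these sets are definable (the $o_i$ are definable functions to $\Gamma_K$) and partition $S$. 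Restricting via $\mathcal{A}_{|S_\sim}$ (using the restriction operation recalled after Definition \ref{def:array}) gives a multi-cell over each $S_\sim$, and this restriction preserves the multi-cell property, admissibility, and conditions (ii.a)--(ii.b) of Lemma \ref{lemma:refinement1} as well as property (ii) of Lemma \ref{lemma:refinement2}. Third, on a fixed piece $S_j=S_\sim$, the equivalence classes of $\sim$ partition the index set $\{1,\dots,r\}$ into blocks $B_1,\dots,B_p$; within each block all center orders coincide by construction. I would then split the multi-cell $\mathcal{A}_{|S_j}$ by projection into the sub-multi-cells $\mathcal{A}_{jk}:=(\{C_i\}_{i\in B_k},\Sigma^{(B_k)})$, where $\Sigma^{(B_k)}$ is the projection of $\Sigma_{|S_j}$ to the coordinates indexed by $B_k$. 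Each $\mathcal{A}_{jk}$ then has the required property that $\ord\sigma_i(s)=\ord\sigma_{i'}(s)$ for all $i,i'\in B_k$ and all sections, and all these sets $\mathcal{A}_{jk}(K)$ together form a partition of $X$.

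\textbf{Main obstacle.} The point that needs care is justifying that the projection splitting in the third step actually yields multi-cells, i.e.\ that the sets defined by the various blocks are pairwise disjoint (condition \eqref{eq:split} of Definition \ref{def:split} must hold, and more generally the $p$-fold analogue). Here I would argue as follows: if $t\in C_i^{\sigma_i(s)}\cap C_{i'}^{\sigma_{i'}(s)}$ with $i\in B_k$, $i'\in B_{k'}$, $k\ne k'$, then since all cell conditions are now $1$-cell conditions with $\square_1=\square_2=<$ and nonzero centers with $\ord\sigma_i(s)\leqslant\alpha_i(s)$, Remark \ref{rem:order} forces $\ord t=\ord\sigma_i(s)=o_i(s)$ and also $\ord t=o_{i'}(s)$; but $i\not\sim i'$ on $S_j$ means $o_i(s)\ne o_{i'}(s)$, a contradiction. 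This disjointness is exactly what condition \eqref{eq:split} demands (it is automatic that the individual potential cells stay disjoint since they were disjoint in $\mathcal{A}$). Once disjointness is established the remaining verifications — that each $\mathcal{A}_{jk}$ still satisfies the inherited conditions and that the union over all $j,k$ partitions $X$ — are routine, since partitioning the base and projecting to subsets of coordinates never introduces new potential centers and never changes the cell conditions themselves.
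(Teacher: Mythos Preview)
Your proposal is correct and follows essentially the same strategy as the paper: partition $S$ according to the pattern of the definable functions $o_i(s)=\ord\sigma_i(s)$, then split the resulting multi-cell by projection into the blocks of equal order, using the ultrametric observation of Remark~\ref{rem:order} (valid here because admissibility gives $\ord\sigma_i(s)\leqslant\alpha_i(s)$) to verify disjointness. The paper's proof differs only cosmetically: it partitions $S$ by the full order type $(<,>,=)$ of the tuple $(o_1,\dots,o_r)$ rather than just the equality pattern, reorders the components by valuation, and then splits iteratively at each strict inequality using the two-way split of Definition~\ref{def:split}; your version skips the unnecessary order information and does the $p$-way split in one step, which is a mild streamlining but not a genuinely different argument.
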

\begin{proof}
Assume that the refinements of Lemma \ref{lemma:refinement2} have been applied. 
Let $\mathrm{Perm}$ be the set consisting of all tuples $\Delta = (\triangle_k)_k$ of length ${r\choose 2}$, where each $\triangle_k$ is an element of the set $\{<,>,=\}$, and $k \in \{(k_1, k_2) \mid 1 \leqslant k_1 < k_2<r\}$. Now partition $S$ into sets 
\[
S_{\Delta}:= \{s \in S \mid \forall (s, \sigma) \in \Sigma: \ord \sigma_{k_1} \ \triangle_k \ \ord \sigma_{k_2}\}. 
\]
Since $\mathrm{Perm}$ is a finite set, this gives us a finite partitioning of $S$, which in turn induces a partitioning of $X$ into multi-cells 
$(\{C_{\delta(i)}\}_i, \Sigma_{\Delta})$. Here $\delta$ is a permutation of $\{1, \ldots, r\}$ and $\Sigma_{\Delta}$ is obtained from $\Sigma \subseteq S \times K^r$ by restricting $S$ to $S_{\Delta}$, and reordering the components, such that they are ordered by valuation. That is, for each multi-cell there is a tuple $(\square_k)_{k<r}$ where each $\square_k$ is either $<$ or $=$ such that, for every section $\sigma$ of $\Sigma_{\Delta}$, 
\[
\ord \sigma_k(s) \ \square_k \ \ord \sigma_{k+1}(s), \text{ for all $s \in S_{\Delta}$ and all $1 \leqslant k < r$}.
\] 
We will now focus on one such multi-cell over a set $S_{\Delta}$ (which we will denote again as $(\{C_{i}\}_i, \Sigma)$ for simplicity), and show how it can be split by projection to obtain the lemma.
Let $k\in\{1,\ldots,r-1\}$ be such that for all $(s, \sigma_1, \ldots,\sigma_{r})\in \Sigma$, we have that 
\[ 
\ord \sigma_1 = \ldots = \ord \sigma_{k} < \ord\sigma_{k+1}.
\]
If no such $k$ exists we are done. Otherwise, it suffices to show that $(\{C_i\}_i,\Sigma)$ splits at $k$. For if it does, $X^{(1, \ldots, k)}$ is a multi-cell satisfying the condition stated in the lemma, and we can iterate the process for $X^{(k+1, \ldots, r)}$. This process must stop because we are decreasing the ambient dimension of $\Sigma$ (indeed, $\Sigma^{(k+1, \ldots, r)} \subseteq S\times K^{r-k}$). 

Let us now show that one can indeed split $X$ at $k$: if $(s,t)\in X^{(1, \ldots, k)}\cap X^{(k+1, \ldots, r)}$, there are $(s,\sigma_1,\ldots,\sigma_k)\in \Sigma^{(1, \ldots, k)}$, $(s,\theta_1,\ldots,\theta_{r-k})\in \Sigma^{(k+1,\ldots,r)}$ and some $1\leqslant j \leqslant r-k$ such that by Remark \ref{rem:order}, 
\[
\ord(t)=\ord(\sigma_1)<\ord(\theta_j)=\ord(t),
\] which is a contradiction.  
\end{proof}

We have now done all the preparatory work to prove Theorem \ref{thm:refinement}: 

\begin{proof}[Proof of Theorem \ref{thm:refinement}]
By Theorem \ref{thm:goopartition}, we may suppose that $X$ is an admissible multi-cell. Using Lemmas \ref{lemma:refinement1}, \ref{lemma:refinement2} and \ref{lemma:refinement3}, $X$ can be partitioned
as a finite union of classical cells and multi-cells $(\{C_i\}_i,\Sigma)$ satisfying conditions (ii) and (iii) of Definition \ref{def:array}. Moreover, each $C_i^{\Sigma^{(i)}}$ satisfies condition (1)-(3) of Definition \ref{def:clustered_cell}. 

All operations used in the previous lemmas preserve admissibility, so it can assumed that each multi-cell $(\{C_i\}_i,\Sigma)$ is admissible. 
Without loss of generality, we may suppose that $X$ is defined by one such multi-cell $(\{C_i\}_{1\leq i\leq r}\,\Sigma)$. 

To ensure condition (i) from Definition \ref{def:array}, it remains to show that each $C_i^{\Sigma^{(i)}}$ satisfies condition (4) of Definition \ref{def:clustered_cell}. To obtain this condition, it may be that we have to add extra elements to $\Sigma$. Consider the set $\Sigma'$ defined by
\[
\Sigma':=\{(s,x_1,\ldots,x_r)\in S\times K^r \mid \bigwedge_{i=1}^r (\exists c)[(s,c)\in \Sigma^{(i)} \wedge x_i\in B_{\rho_{i, \text{max}}(s) +m}(c)] \}.
\]
The set $\Sigma'$ is obtained from the original set $\Sigma$ by adding, for every for every $c \in \Sigma^{(i)}_s$, all elements in the ball $B_{\rho_{i, \text{max}}(s) +m}(c)$. This ensures that each $C_{i}^{{\Sigma'}^{(i)}}$ now satisfies condition (4) of definition \ref{def:clustered_cell}. It is easy to check that $(\{C_i\}_i,\Sigma')$ still defines the same set $X$, and still satisfies conditions (i)-(iii) of Definition \ref{def:array}. 

Before we can discuss condition (iv) of Definition \ref{def:array}, we need to introduce the following notion. Let $\sigma_i$ be a potential center contained in $\Sigma^{(i)}$. We say that $\sigma_i(s)$ is an \emph{admissible center} (for some $s \in S$) if it does not violate condition (d) of the definition of admissibility (Definition \ref{def:admissible}). More precisely, we mean the following. Let $B$ be the maximal ball in $X_s$ that contains $\sigma_i(s)$. Then $\sigma_i(s)$ is an admissible center if, for any section $\sigma$ of $\Sigma$ that has $\sigma_i$ as a component, the ball $B$ is contained within a single cell of the decomposition of $X_s$ induced by $\sigma(s)$.

When replacing the original set $\Sigma$ by $\Sigma'$, we may have added centers which are not admissible (the reader can check that the conditions of pre-admissibility will never be violated). Yet, note that by construction, any ball in ${\Sigma'}^{(i)}_s$ of size $\rho_{i, \text{max}}(s) + m$ still contains at least one admissible center.  

Let us now show that this implies condition (iv) from Definiton \ref{def:array}. 
Without loss of generality, we can take $i = 1$. Consider all possible sections of $\Sigma'$ which are of the form $(\sigma_1(s), \zeta_{2}(s), \ldots, \zeta_r(s))$. Each such section induces a partition
\[X_s = C_1^{\sigma_1(s)} \cup C_2^{\zeta_2(s)} \cup \ldots \cup C_r^{\zeta_r(s)}.\]
Now consider the maximal ball $B$ around $\sigma_1(s)$. We need to distinguish between two cases. It may be that this ball does not contain any admissible centers. However, in that case the ball must have a radius strictly bigger than $\rho_{i, \text{max}}(s) + m$, in which case condition (iv) holds. If the ball does contain an admissible center, we may as well assume that $\sigma_1(s)$ itself is admissible. Hence, there should be a single cell in the decomposition that contains the maximal ball $B$ around $\sigma_1(s)$. This has to be one of the cells $C_i^{\zeta_i(s)}$ (since $\sigma_1(s) \not \in C^{\sigma_1(s)}$). 

Let us assume that $B \subset C_2^{\zeta_2(s)}$. Note that, if the ball $B$ would be strictly bigger than the ball \(B_{\rho_{1, \max}(s) +1 }(\sigma_1(s))\), then the cells $C_1^{\sigma_1(s)}$ and $C_2^{\zeta_2(s)}$ would have non-empty intersection, which is a contradiction.
\end{proof}


\section{On the structure of the trees of potential centers } \label{sec:separation}

%

Let $C^{\Sigma}$ be a clustered cell. As we have observed before, there may exist different sections $\sigma, \sigma'$ of $\Sigma$ such that the potential cells $C^{\sigma}$ and $C^{\sigma'}$ do not define the same set. To formalize this observation, let us introduce the following equivalence relation.

\begin{definition}\label{def:equivalence} Let $C^\Sigma$ be a clustered cell.  For $s\in S$, elements $c,c'\in \Sigma_s$ are said to be \emph{$(C,\Sigma_s)$-equivalent} if they define the same cell fiber over $s$, that is, if 
\[
(\forall t) (C(s,c,t)\leftrightarrow C(s,c',t)).
\]
Given sections $\sigma,\sigma':S \to K$ of $\Sigma$, $\sigma$ and $\sigma'$ are \emph{$(C,\Sigma_s)$-equivalent} if $\sigma(s)$ and $\sigma'(s)$ are \emph{$(C,\Sigma_s)$-equivalent}, that is, if $C^{\sigma(s)}=C^{\sigma'(s)}$. 
\end{definition}
\noindent We will sometimes write \emph{equivalent} rather than $(C,\Sigma_s)$-equivalent, when the meaning is clear from the context.
\\\\
The main goal of this section is to prove the following proposition. 

\begin{proposition}\label{prop:finitebranching} Let $(\{C_i\}_i, \Sigma)$ be a cell array. There exists a uniform bound $N \in \N$, such that for all $s \in S$ and all $1\leq i\leq r$, the number of $(C_i,\Sigma^{(i)}_s)$-equivalence classes is at most $N$. 
\end{proposition}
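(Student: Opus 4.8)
The goal is a uniform-in-$s$ bound on the number of $(C_i,\Sigma^{(i)}_s)$-equivalence classes, for each component $i$ of a cell array. Fix $i$ and write $C = C_i$, $\Sigma = \Sigma^{(i)}$, so $C$ is a $1$-cell condition with $\square_1 = \square_2 = <$, and by Definition \ref{def:clustered_cell}(4) the set $C^\Sigma$ already absorbs whole balls of equivalent centers. The first observation is that $c,c' \in \Sigma_s$ are $(C,\Sigma_s)$-equivalent precisely when $\ord(c - c') \geqslant \rho_{i,\max}(s) + m$, i.e. when they lie in a common ball of radius $\rho_{i,\max}(s)+m$; so the number of equivalence classes over $s$ equals the number of such balls needed to cover $\Sigma_s$. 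Thus the proposition is equivalent to a uniform bound on this covering number.

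The plan is to argue by contradiction using compactness. Suppose no uniform $N$ exists. Then for each $N$ there is $s_N \in S$ (in some $K_N \equiv K$) such that $\Sigma_{s_N}$ meets more than $N$ distinct balls of radius $\rho_{i,\max}(s_N)+m$. By passing to a sufficiently saturated elementary extension $(K^*, \Gamma^*; \Lm_2)$ and an element $s^* \in S(K^*)$, one obtains a definable set $\Sigma_{s^*} \subseteq K^*$ that meets \emph{infinitely many} (indeed, more than any standard integer many) pairwise disjoint balls of the fixed radius $\rho^* := \rho_{i,\max}(s^*)+m$. The key point is now to derive a contradiction with condition (iv) of Definition \ref{def:array} (the weakened admissibility condition), together with the structure of $X^* = \mathcal{A}(K^*)_{s^*}$ as a finite disjoint union of cell fibers. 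Concretely: by condition (iv), every admissible potential center $\sigma_i(s^*) \in \Sigma_{s^*}$ sits inside a ball $B \subseteq X^*$ with $B \subseteq B_{\rho_{i,\max}(s^*)+1}(\sigma_i(s^*))$; and by construction (the paragraph following $\Sigma'$ in the proof of Theorem \ref{thm:refinement}) every radius-$\rho^*$ ball meeting $\Sigma_{s^*}$ contains an admissible center. So each such ball $B_{\rho^*}(c)$ is contained in some maximal ball $B \subseteq X^*$ of radius between $\rho_{i,\max}(s^*) - n + 1$ and $\rho_{i,\max}(s^*)+1$, hence in particular $B$ is a maximal ball of $X^*$ that is contained in a single cell fiber $C_j^{\sigma_j(s^*)}$ of the (finite!) decomposition.

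The contradiction then comes from counting: $X^*_{s^*}$ decomposes into finitely many cell fibers $C_1^{\sigma_1},\ldots,C_r^{\sigma_r}$ (the number $r$ is a fixed standard integer, independent of $s^*$), and I will show that a single cell fiber $C_j^{\sigma_j}$ can contain only boundedly many disjoint maximal balls of $X^*$ that arise as (or contain) radius-$\rho^*$ balls meeting $\Sigma_{s^*}$. The point is that within one $1$-cell fiber $C_j^{\sigma_j}$, the maximal balls of $X^*$ are exactly its leaves (or unions of consecutive leaves forced by $\square_2 = <$), and Remark \ref{rem:order} pins the valuation: all elements of $C_i^{\sigma_i}$ have $\ord t = \ord \sigma_i(s^*)$, which is fixed across $\Sigma^{(i)}_{s^*}$ by Definition \ref{def:array}(ii). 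Combined with condition (4) of Definition \ref{def:clustered_cell} forcing the absorbed balls to have radius exactly $\rho_{i,\max}(s^*) + m$, two inequivalent centers in the \emph{same} cell fiber $C_j^{\sigma_j}$ would force that fiber to contain two disjoint leaves at distance $< \rho^*$, eventually contradicting either disjointness of the $C_\ell^{\sigma_\ell}$ or the maximality/structure constraints. Hence the number of equivalence classes is at most a function of $r$, $n_i$, $m_i$ alone — a uniform bound.

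\textbf{Main obstacle.} The delicate step is the last one: turning condition (iv) of Definition \ref{def:array} and the "every big ball contains an admissible center" remark into an honest combinatorial bound on how many inequivalent centers can hide inside one cell fiber, while tracking all the $<$ versus $=$ subtleties in the leaf structure and making sure the bound genuinely depends only on the fixed data $(r, \{n_i\}, \{m_i\})$ and not on $s$. An alternative, cleaner route avoiding saturation is to express "$\Sigma_s$ meets more than $N$ balls of radius $\rho_{i,\max}(s)+m$" as a first-order condition on $s$ (a definable subset $S_N \subseteq S$), observe $S_N \supseteq S_{N+1}$, and use $P$-minimality / the cell decomposition of \cite{cubi-leen-2015} applied to $\Sigma$ itself to see that the "branching number" function $s \mapsto \#\{\text{balls}\}$ is a definable $\Gamma_K \cup \{\infty\}$-valued (or $\N\cup\{\infty\}$-valued) function, hence by Presburger-definability of its image (Lemma 2 / Theorem 6 of \cite{clu-presb03}) is bounded unless it takes the value $\infty$ — and the value $\infty$ is ruled out by condition (iv) exactly as above. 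I would develop this second approach in parallel, as it localizes the difficulty to a single finiteness statement about one fiber.
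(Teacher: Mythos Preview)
Your overall strategy (compactness reduction to a single fiber with infinitely many equivalence classes, then a counting argument against the finite decomposition) matches the paper's. But the core step you flag as the ``main obstacle'' is genuinely broken as written, and the fix requires a different idea.

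\medskip

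\textbf{The gap.} You write that ``by condition (iv), every admissible potential center $\sigma_i(s^*)\in\Sigma_{s^*}$ sits inside a ball $B\subseteq X^*$.'' This misreads condition (iv) of Definition~\ref{def:array}: it is a \emph{conditional}, saying that \emph{if} some ball $B\subseteq X_s$ contains $\sigma_i(s)$ then $B$ is small, not that such a ball exists. Centers need not lie in $X_s$ at all (recall $\sigma(s)\notin C^{\sigma(s)}$ for a $1$-cell with $\square_2=<$, and nothing forces $\sigma(s)$ into some other cell of the decomposition). So you cannot place each equivalence-class ball $B_{\rho^*}(\sigma_j)$ inside $X^*$ and then pigeonhole into cell fibers. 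Your alternative ``cleaner route'' via Presburger-definability reduces to precisely the same fiberwise finiteness statement, so it inherits the same gap.

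\medskip

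\textbf{What the paper does instead.} The paper works not with the centers but with the \emph{top leaves} $\Theta_{\sigma_j(s)}:=C_1^{\sigma_j(s),\rho_{1,\max}(s)}$, which \emph{are} disjoint subsets of $X_s$. It then bounds, for each cell fiber $C_i^{\zeta(s)}$ in a fixed decomposition of $X_s$, how many top leaves it can meet. This bound is not simply a function of $(r,n_i,m_i)$; it depends on the number of \emph{branching heights} of the tree $T(\Sigma^{(1)}_s)$ lying in the relevant range. To make this finite and controllable, the paper first proves a preliminary reduction (Lemma~\ref{lemma:densetrees}): one may shrink $\Sigma$ so that the tree $T(\Sigma^{(1)}_{s})$ is $q_K$-regular to depth $R$ for any chosen $R>r$. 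Then a case analysis (leaves of $C_i^{\zeta(s)}$ at height $\geqslant\rho_{1,\max}$ versus $<\rho_{1,\max}$; for the latter, intersections can only occur at branching heights or at the height where $\zeta(s)$ leaves the tree) shows each cell fiber meets at most $(l+2)q_K^{m_1}$ top leaves in a depth-$l$ subtree, while the subtree has $q_K^l$ leaves. For $l$ large this exceeds what $r$ cells can cover. Condition (iv) is used here only \emph{negatively}, to rule out a cell fiber swallowing two top leaves together with the center between them.

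\medskip

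The ingredients you are missing are therefore: (a) switching from centers to top leaves as the objects to pigeonhole, (b) the dense-tree reduction controlling branching heights, and (c) the observation that a single cell fiber can only pick up top leaves at branching heights (plus a bounded error), which is where the growth-rate mismatch $q_K^l$ versus $l\cdot q_K^{m_1}$ comes from.
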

\noindent The proof of Proposition \ref{prop:finitebranching} will rely on the combinatorial structure of the set $\Sigma$. Let us first introduce some notions which will be used in the proof.
\\\\
We start by noting that, given a clustered cell $C^\Sigma$, a section $\sigma$ of $\Sigma$ and $s\in S$, the $(C,\Sigma_s)$-equivalence class of $\sigma$ corresponds to the ball of radius $\rho_{\text{max}}(s)+m$ centered at $\sigma(s)$ (recall that $\rho_{\text{max}}$ and $m$ only depend on the cell condition $C$). This follows from the definition of clustered cell (condition (4) of Definition \ref{def:clustered_cell}). If no confusion arises, we will use the abbreviated notation $B(\sigma(s))$ for such balls of equivalent centers, i.e. 
\[
B(\sigma(s)):=B_{\rho_{\text{max}}(s)+m}(\sigma(s)).				
\]  
\begin{minipage}{0.55 \textwidth}
The picture on the right further illustrates this concept. Here we have drawn the leaves of the cell fiber $C^{\sigma_3(s)}$, and the leaves for the fibers $C^{\sigma_1(s)}$ and $C^{\sigma_2(s)}$ could be depicted similarly. 
\\\\
Note that the cell fibers $C^{\sigma_2(s)}$ and $C^{\sigma_3(s)}$ are disjoint, whereas $C^{\sigma_1(s)}$ and $C^{\sigma_2(s)}$ are not. To study possible intersection between potential cell fibers, it will be important to consider \emph{branching heights} ($\gamma_1(s)$ and $\gamma_2(s)$) in the picture), as they determine whether an intersection could possibly be nonempty. 
 \end{minipage}
 \begin{minipage}{0.05 \textwidth}\end{minipage}
 \begin{minipage}{0.4 \textwidth}
\begin{tikzpicture}[yscale=0.8]
\path [fill = gray] (2.25,5.3) -- (3,4.25) -- (3.75,5.3) -- cycle;
\draw (2,1.6) -- (3,4.25);
\path[fill = gray] (0.25,5.3) -- (1,4.25) -- (1.75,5.3) -- cycle;
\draw (2,1.6) -- (1,4.25);
\path[fill = gray] (4.25,5.3) -- (5,4.25) -- (5.75,5.3) -- cycle;
\draw (3,-0.75) -- (5,4.25);
\draw (2,1.6) -- (3,-0.75);
\draw (3,-0.75) -- (3,-1.2);

\draw (4.7,3.5) -- (5.45,4.25);
\draw (4.4,2.75) -- (5.15, 3.5);
\draw (4.1,2) -- (4.85,2.75);
\draw (3.8,1.25) -- (4.55,2);
\draw (3.5,0.5) -- (4.25,1.25);
\draw (5.45,4.25) -- (6.1,4.6) -- (5.8,4.9) -- cycle;
\draw (5.15,3.5) -- (5.8,3.85) -- (5.5,4.15) -- cycle;
\draw (4.85,2.75) -- (5.5,3.1) -- (5.2,3.4) -- cycle;
\draw (4.55,2) -- (5.2,2.35) -- (4.9,2.65) -- cycle;
\draw (4.25,1.25) -- (4.9,1.6) -- (4.6,1.9) -- cycle;
\draw [dashed] (0,4.05) -- (6,4.05);
\draw [dashed] (0,0.25) -- (6,0.25);
\node[{right}] at (6,4.05){\footnotesize$\beta(s)$};
\node[{right}] at (6,0.25){\footnotesize$\alpha(s)$};
\node[{above}] at (0.9,5.3){\footnotesize$B(\sigma_1(s))$};
\node[{above}] at (2.9,5.3){\footnotesize$B(\sigma_2(s))$};
\node[{above}] at (4.9,5.3){\footnotesize$B\sigma_3(s))$};
\draw [fill] (3,-0.75) circle [radius = 1.25pt];
\draw [fill] (2,1.6) circle [radius = 1.25pt];
\node[{left}] at (3,-0.75){\footnotesize$\gamma_2(s)$};
\node[{left}] at (2,1.6){\footnotesize$\gamma_1(s)$};
\end{tikzpicture}

\end{minipage}

\begin{definition} Let $C^{\Sigma}$ be a clustered cell. For $s\in S$, we call $\gamma\in \Gamma_K$ a \emph{branching height} of $\Sigma_s$, if there exist sections $\sigma, \sigma'$ of $\Sigma$ which are \emph{not} $(C,\Sigma_s)$-equivalent, and for which $\ord(\sigma(s)-\sigma'(s)) = \gamma$.   
\end{definition}

Let $\mathbb{B}$ denote the set of balls of $K$, that is
 \[\mathbb{B}:= \{ B_{\gamma}(a) \mid a \in K, \gamma \in \Gamma_K\cup\{\infty \}\} .\]
The set $\mathbb{B}$, equipped with the reversed inclusion relation $\supseteq$, forms a meet semi-lattice tree. The meet of two balls $B_1$ and $B_2$, denoted by $\inf(B_1,B_2)$, corresponds to the smallest ball $B\in \mathbb{B}$ containing both $B_1$ and $B_2$. This structure is interpretable in $K$. 
Note that $K$ can be identified with the set of maximal elements of $\mathbb{B}$: elements of $K$ are in definable bijection with balls of radius $\infty$ in $\mathbb{B}$, which are maximal balls with respect to reverse inclusion.

Let $C^\Sigma$ be a clustered cell. To each $\Sigma_s$  we associate a subtree $T(\Sigma_s)$ of $\mathbb{B}$ (the set of all balls) generated by the $(C,\Sigma_s)$-equivalence classes, i.e. 
\[
T(\Sigma_s):=\{B\in \mathbb{B}\mid B=\inf(B(\sigma(s)), B(\sigma'(s))),\ \text{where } \sigma,\sigma' \text{ are sections of } \Sigma\}.
\] 
Let $Y\subseteq S\times \Gamma_K$ be such that for each $s\in S$, $Y_s$ denotes the set of all branching heights of $\Sigma_s$. Each set $Y_s$ is bounded above by $\beta(s)+m$ and is uniformly definable in $s$. For each non-zero $l\in\N$, we can inductively define a function $\gamma_l:S\to\Gamma_K\cup\{-\infty\}$ as follows: let $\gamma_1(s)$ denote the biggest element of $Y_s$ and put
\[
\gamma_{l+1}(s)=
\begin{cases}
\sup(Y_s\setminus\{\gamma_1(s),\ldots,\gamma_l(s)\}) & \text{ if } Y_s\setminus\{\gamma_1(s),\ldots,\gamma_l(s)\}\neq\emptyset,\\
-\infty & \text{ otherwise}.
\end{cases}
\] 
Both $Y_s$ and the functions $\gamma_l$ depend on the ambient clustered cell $C^\Sigma$ we are working in. 

\

Let $\gamma\in Y_s$ be a branching height, and $\sigma$ a section of $\Sigma$ such that $B_{\gamma}(\sigma(s))$ is a node of $T(\Sigma_s)$. By the \emph{successors} of $B_\gamma(\sigma(s))$ in $T(\Sigma_s)$, we will mean those balls $B\in T(\Sigma_s)$ with $B\subsetneq B_\gamma(\sigma(s))$, for which there does not exist any ball $B'\in T(\Sigma_s)$ with $B\subsetneq B'\subsetneq B_\gamma(\sigma(s))$. If $B_{\gamma}(\sigma(s))$ is a node of $T(\Sigma_s)$, then the number of successors of $B_{\gamma}(\sigma(s))$ must be an integer $k$ between 2 and $q_K$. We use the first order formula $\phi_k(\sigma(s), \gamma)$ to express that $B_{\gamma}(\sigma(s))$ has exactly $k$ successors: 
\[\small
\phi_k(\sigma(s), \gamma) := (\exists c_1,\ldots, c_{k}\in \Sigma_s)(\forall \zeta\in \Sigma_s)\left(
\begin{array}{l}
\sigma(s)=c_1 \wedge \bigwedge_{i\neq j} \ord(c_i -c_j)=\gamma \ \wedge\\
\bigwedge_{i\neq j} [c_i \text{\ and\ } c_j \text{ are not $(C, \Sigma_s)$-equivalent}] \ \wedge \\
\left[\ord(\zeta-c_1)=\gamma \to \bigvee_{i\neq 1} \ord(\zeta-c_i)>\gamma\right]
\end{array}\right).
\]
\normalsize 
One should be aware that for some $\gamma\in Y_s$ and some sections $\sigma$ of $\Sigma$, the ball $B_{\gamma}(\sigma(s))$ may not necessarily be a node of $T(\Sigma_s)$. We express this situation by the following first-order formula $\phi_1(\sigma(s), \gamma)$: 
\[
\phi_1(\sigma(s), \gamma) := \sigma(s)\in\Sigma_s\wedge  (\forall \zeta\in \Sigma_s)(\ord(\sigma(s) -\zeta)\neq\gamma).
\]
The previous discussion implies that given any $\gamma\in Y_s$ 
 and any section $\sigma$ of $\Sigma$, there exists a unique $k\in\{1,\ldots, q_k\}$ such that $\phi_{k}(\sigma(s),\gamma)$ holds. 
\begin{definition}\label{def-signature}  Let $d\in \NN\setminus\{0\}$, let  $C^\Sigma$ be a clustered cell and $\sigma$ a section of $\Sigma$. For $s\in S$, the \emph{$d$-signature} of $\sigma(s)$ is the tuple $(k_1, \ldots, k_d) \in \{1, \ldots, q_K,-\infty\}^d$ where for $i\in\{1,\ldots,d\}$
\[
k_i=
\begin{cases}
k & \text{ if } \gamma_i(s)\neq-\infty \text{ and } \phi_{k}(\sigma(s),\gamma_i(s)) \text{ holds},\\
-\infty & \text{ if } \gamma_i(s)=-\infty.
\end{cases}
\qedhere\]
\end{definition} 
\noindent Hence, if some $k_i>1$ then $B_{\gamma_i(s)}(\sigma(s))$ is a node of $T(\Sigma_s)$ with $k_i$ successors. On the other hand, if $k_i=1$ then the ball $B_{\gamma_i(s)}(\sigma(s))$ is not a node of the tree $T(\Sigma_s)$.
\\\\
The $d$-signature $(k_1,\ldots, k_d)$ of $\sigma(s)$ also encodes information about the number of branching heights: if $k_i\neq -\infty$ for all $1\leq i\leq d$, then $\Sigma_s$ has at least $d$ branching heights. \vspace{5pt}
\\
\noindent\begin{minipage}{0.65 \textwidth}
%
If the tree $T(\Sigma_s)$ has depth $i_0 < d$ (that is, the tree has $i_0$ branching heights), then ${i_0 +1}$ will be the least index such that $k_{i_0+1}=-\infty$. \\\\
For example, in the tree shown here, $\sigma_1$ has 3-signature $(3,1,2)$ and $\sigma_2$ has 3-signature $(2,3,2)$. The 4-signature of $\sigma_1$ is $(3,1,2, - \infty)$.
 \end{minipage}
\begin{minipage}{0.05 \textwidth}
\end{minipage}
\begin{minipage}{0.3 \textwidth}
\begin{tikzpicture}[scale = 0.7]
\draw [dashed] (0.5,1) -- (6.2,1);
\draw [dashed] (0.5,3) -- (6.2,3);
\draw [dashed] (0.5,4.5) -- (6.2,4.5);
\node[{left}] at (0.5,1){\footnotesize$\gamma_3$};
\node[{left}] at (0.5,3){\footnotesize$\gamma_2$};
\node[{left}] at (0.5,4.5){\footnotesize$\gamma_1$};

\draw [line width=1.3pt] (3,0) -- (3,1);
\draw [line width=1.3pt] (3,1) -- (1,4.5);
\draw [line width=1.3pt] (1,4.5) -- (1.5,5.3);
\draw (1,4.5) -- (1,5.3);
\draw (1,4.5) -- (0.5,5.3);

\draw [line width=1.3pt] (3,1) -- (4.5,3);
\draw [line width=1.3pt] (4.5,3) -- (4.5,4.5);
\draw [line width=1.3pt] (4.5,4.5) -- (4.2,5.3);
\draw (4.5,4.5) -- (4.8,5.3);
\draw (4.5,3) -- (3,5.3);
\draw (4.5,3) -- (5.8,4.5);
\draw (5.8,4.5) -- (5.4,5.3);
\draw (5.8,4.5) -- (5.65,5.3);
\draw (5.8,4.5) -- (5.95,5.3);
\draw (5.8,4.5) -- (6.2,5.3);

\node[{above}] at (1.5,5.3){\footnotesize$\sigma_1$};
\node[{above}] at (4.2,5.3){\footnotesize$\sigma_2$};
\end{tikzpicture}
\end{minipage}\\
We will now show that, if the tree associated to some $\Sigma^{(i)}_s$ is infinite, then it can be assumed to be dense, in the following sense: 
\begin{lemma}\label{lemma:densetrees}
 Let $(\{C_i\}_{1 \leqslant i \leqslant r}, \Sigma)$ be a cell array defining a set $X$. Assume that there exists $s_0 \in S$ for which there are infinitely many $(C_1, \Sigma^{(1)}_{s_0})$-equivalence classes. Let $R >r$ be an integer. Then there exists a definable set $\Sigma' \subseteq \Sigma$, such that $(\{C_i\}_{1 \leqslant i \leqslant r}, \Sigma')$ is a cell array defining the same set $X$, such that all elements of $\Sigma_{s_0}^{(1)}$ have $R$-signature $(q_K, \ldots, q_K)$. 
\end{lemma}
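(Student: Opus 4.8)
The plan is to delete equivalence classes from $\Sigma$, uniformly in $s$, until the tree attached to the first coordinate over $s_0$ becomes the complete $q_K$-ary tree down to its $R$-th branching level. First I would record the structural features of $T:=T(\Sigma^{(1)}_{s_0})$ that the hypothesis provides. Having infinitely many $(C_1,\Sigma^{(1)}_{s_0})$-equivalence classes, $T$ is an infinite tree; each of its nodes has between $2$ and $q_K$ successors, so $T$ is finitely branching, and hence it must have infinitely many branching heights, i.e.\ the set $Y_{s_0}$ of branching heights of $\Sigma^{(1)}_{s_0}$ is infinite. Since $Y_{s_0}$ is a definable subset of $\Gamma_K$ it is Presburger by \cite{clu-presb03}, and being bounded above by $\beta_1(s_0)+m$ it is therefore unbounded below (it lies in a fixed interval of $\Gamma_K$, using conditions (2)--(3) of Definition~\ref{def:clustered_cell} and Remark~\ref{rem:order}; this interval is infinite only because $\Gamma_K$ need not be archimedean). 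I would also note that the branching data is uniformly definable: for each $s$, $\gamma$ and $c$, whether $B_\gamma(c)$ is a node of $T(\Sigma^{(1)}_s)$ and how many successors it has is governed by the formulas $\phi_k$ of the text, and likewise the ``local pattern'' recording which of the $q_K$ immediate sub-balls of $B_\gamma(c)$ contain an equivalence class is a definable function of $(s,\gamma,c)$.

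The main step is to show that, after replacing $\Sigma$ by a suitable definable subset, one may assume that \emph{every} node of $T$ has the full number $q_K$ of successors; I expect this to be the main obstacle, since deleting equivalence classes can never create branching, so the bushiness has to be shown to be intrinsic to $T$ and this is where the cell-array hypotheses must be used in an essential way. The mechanism I would pursue combines two ingredients. On the one hand, the local branching pattern is, as noted, a definable function of the height, hence Presburger-controlled and eventually periodic as one descends; together with the infinitude of $Y_{s_0}$ this isolates long blocks of branching levels on which the pattern is constant. On the other hand, since every section of $\Sigma$ re-covers $X$, for two inequivalent first centres $c,c'$ with $\ord(c-c')=\gamma$ the cells $C_1^{c}$ and $C_1^{c'}$ agree below height $\gamma-m$ but differ above it, and the discrepancy must be absorbed by the remaining $r-1$ components; feeding this into the weak-admissibility condition (iv) of Definition~\ref{def:array}, which forbids a ball of $X_{s_0}$ around a centre from being larger than $B_{\rho_{1,\max}(s_0)+1}$, one forces every sub-ball below a branching node to be occupied, as otherwise two leaves could be merged into a strictly larger ball of $X_{s_0}$ around a neighbouring centre. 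Carrying this out uniformly in $s$ produces the definable $\Sigma'\subseteq\Sigma$; that $(\{C_i\}_i,\Sigma')$ is still a cell array defining $X$ is then routine, since passing to a subset of $\Sigma$ preserves the property that every section covers $X$, no centre valuations change, condition (4) of Definition~\ref{def:clustered_cell} is respected because only whole equivalence classes are removed, and conditions (ii)--(iv) of Definition~\ref{def:array} are inherited.

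Finally, once $T$ is known to be complete $q_K$-ary, I would fix a node $B^\ast$ of $T$ at the $(R+1)$-st branching height $\gamma_{R+1}(s_0)$; there are only $q_K^{R}$ such nodes, a standard finite number, so by Lemma~\ref{lem:finiteskolem} one can choose $B^\ast$ definably and uniformly in $s$ (doing nothing for those $s$ whose tree has fewer than $R+1$ branching heights). Let $\Sigma'$ keep precisely the equivalence classes contained in $B^\ast$. Then all kept centres lie in $B^\ast$, so the branching heights of $T((\Sigma')^{(1)}_{s_0})$ are exactly $\gamma_1(s_0)>\gamma_2(s_0)>\cdots>\gamma_{R+1}(s_0)$, each still with $q_K$ successors by completeness of $T$, with the original subtrees hanging below the leaves; in particular the new $\gamma_i$ coincide with the old ones for $i\le R$, and every element of $(\Sigma')^{(1)}_{s_0}$ has $R$-signature $(q_K,\ldots,q_K)$. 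Checking that this $\Sigma'$ still meets Definition~\ref{def:array} is as in the previous paragraph, which completes the plan.
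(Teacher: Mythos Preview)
Your proposal has a genuine gap at the heart of step~1. You try to force every node of $T$ to be $q_K$-branching by appealing to condition~(iv) of Definition~\ref{def:array}, but that condition only controls how large a ball of $X_{s_0}$ around a \emph{centre} can be; it says nothing about which of the $q_K$ sub-balls of a node of the centre tree $T(\Sigma^{(1)}_{s_0})$ are occupied. The sentence ``otherwise two leaves could be merged into a strictly larger ball of $X_{s_0}$ around a neighbouring centre'' does not follow: an unoccupied sub-ball of $T$ is a region of $K$ containing no centres, and its intersection with $X_{s_0}$ is not constrained by condition~(iv) at all. In fact the tree need not be globally $q_K$-ary, so you are attempting to prove something stronger than what holds.

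The paper's argument is both different and simpler, and uses $P$-minimality at the level of $K$ rather than of $\Gamma_K$. One shows directly that for each $d$, all but finitely many equivalence classes already have $d$-signature $(q_K,\ldots,q_K)$: if infinitely many $\sigma_j(s_0)$ shared a signature $(q_K,\ldots,q_K,k_d)$ with $k_d<q_K$, then the definable set $Z=\bigcup_{j}B_{\gamma_{d-1}(s_0)}(\sigma_j(s_0))$ would contain infinitely many balls of radius $\gamma_{d-1}(s_0)$ which are maximal in $Z$, contradicting semi-algebraic cell decomposition. Taking $d=R$, one then simply removes from $\Sigma_{s_0}$ the finitely many bad equivalence classes (together with all centres within $\gamma_R(s_0)$ of them); nothing more is needed.

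Your step~2 is therefore unnecessary, and is in any case incorrect as stated. Recall that $\gamma_1(s_0)>\gamma_2(s_0)>\cdots$, so a ball at height $\gamma_{R+1}(s_0)$ is \emph{closer to the root} than one at height $\gamma_1(s_0)$; there are not $q_K^{R}$ such nodes but typically very few, and restricting all centres to lie in a single such $B^\ast$ would not eliminate any branching at heights $\gamma_1,\ldots,\gamma_R$ --- it would only collapse branching at heights $\gamma_{R+1},\gamma_{R+2},\ldots$. Your assertion that ``the new $\gamma_i$ coincide with the old ones for $i\le R$'' has the orientation of the tree reversed.
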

\begin{proof}
Let $s_0\in S$ be such that there are infinitely many $(C_1,\Sigma_{s_0}^{(1)})$-equivalence classes. For $\kappa$ an infinite cardinal number, let $\{\sigma_j\mid j< \kappa\}$ be a set of sections of $\Sigma^{(1)}$ such that
\begin{enumerate}
\item[(i)] each $(C_1,\Sigma_{s_0}^{(1)})$-equivalence class is represented by some $\sigma_j(s_0)$;
\item[(ii)] for $j<j'<\kappa$, $\sigma_{j}$ and $\sigma_{j'}$ are not $(C_1,\Sigma_{s_0}^{(1)})$-equivalent.
\end{enumerate}

Let $\gamma_l(s_0)$ be the $l^{\text{th}}$-branching height of $\Sigma^{(1)}_{s_0}$. 

\begin{claim} 
For any $d\in\N\setminus\{0\}$, there exists a finite set of ordinals $W_d$ such that for all $j<\kappa$ with $j\notin W_d$, the $d$-signature of $\sigma_j(s_0)$ equals  $(q_K, \ldots, q_K)$.
\end{claim}

Suppose that the claim is false, and let $d\in \N\setminus\{0\}$ be the smallest integer witnessing this. Let $(q_K,\ldots,q_k,k_d)$ be a $d$-signature with $k_d<q_K$ such that the set 
\[
J:=\{j<\kappa \mid  \sigma_j(s_0) \text{ has signature } (q_K,\ldots,q_k,k_d)\},
\]
is infinite in $\kappa$. The set 
\[
Z:=\bigcup_{j\in J} B_{\gamma_{d-1}(s)}(\sigma_j(s_0))
\]
is a definable subset of $K$ which is the union of infinitely many balls of radius $\gamma_{d-1}(s_0)$ (here we put $\gamma_0(s_0)$ equal to the radius of the equivalence classes of $\Sigma^{(1)}_{s_0}$, i.e. $\gamma_0(s_0):= \rho_{max}(s_0)+m_1$, where $\rho_{max}(s_0)$ is the height of the top leaves for $C_1$) 
 which are maximal with respect to inclusion in $Z$. By semi-algebraic cell decomposition, this situation cannot occur in a $P$-minimal field, which shows the claim.   
\\\\
Let $r$ be the number of cell conditions in the cell array (counted with multiplicity). By our claim, we know that, whenever we fix an integer $R>r$, we can assume that the $R$-signature of $\sigma_j(s_0)$ will be $(q_K, q_K,  \ldots, q_K)$ for all $j<\kappa$, except for a finite set of indices $W_R$. Now define a set $\widetilde{W}_R$ as follows: 
\[ \widetilde{W}_R:= \{c \in \Sigma_{s_0}^{(1)} \mid \exists j \in W_R: \ord(c - \sigma_j(s_0)) \geqslant \gamma_{R}(s_0) \}\]
Let $\Sigma' \subseteq \Sigma$ be the set obtained by removing the following fibers from $\Sigma_{s_0}$: 
\[
\{( c,\zeta_2,\ldots,\zeta_r)\in \Sigma_{s_0}: c \in  \widetilde{W}_R\}.
\]
The array $(\{C_i\}_i,\Sigma')$ still defines $X$ and moreover, all elements of $(\Sigma')^{(1)}_{s_0}$ have the same $R$-signature $(q_K,\ldots, q_K)$. 
\end{proof}


We are now ready to prove Proposition \ref{prop:finitebranching}.
\begin{proof}[Proof of Proposition \ref{prop:finitebranching}]
Permuting the cell conditions if necessary, it suffices to show the result for $\Sigma^{(1)}$. 
Suppose towards a contradiction that such a uniform bound does not exist. By compactness, possibly working over an elementary extension, let $s\in S$ be such that there are infinitely many $(C_1,\Sigma_s^{(1)})$-equivalence classes. Fix some sufficiently large value of $R$, such that at least $R>\max\{r,m_1\}$. 
Applying Lemma \ref{lemma:densetrees}, we may assume that all elements of $\Sigma^{(1)}_s$ have the same $R$-signature $(q_K,\ldots, q_K)$. 

We need to fix some notations first. We write $\sigma_j$ for potential centers in $\Sigma^{(1)}$. The top leaf of a potential cell fiber $C_1^{\sigma_j(s)}$ will be denoted by $\Theta_{\sigma_j(s)}$. Note that for $j \neq j'$, the leaves $\Theta_{\sigma_{j}(s)}$ and $\Theta_{\sigma_{j'}(s)}$ are disjoint (this follows from the assumption that $\sigma_j$ and $\sigma_{j'}$ are non-equivalent at $s$). 
\\\\
Fix a cell condition $C_i$ from the description of the array, together with a center $\zeta$ from $\Sigma^{(i)}$. Write $\rho(s)$ for the height where $\zeta(s)$ branches off from the tree of $\Sigma^{(1)}_s$, i.e. put
\[ \rho(s):= \max_{c \in \Sigma^{(1)}_s} \{\ord(\zeta(s) -c)\}.\]
Note that $\rho(s) \in \Gamma_K \cup \{\infty\}$. We want to know in what ways leaves of $C_i^{\zeta(s)}$ can intersect with balls $\Theta_{\sigma_{j}(s)}$. 
Note that the following always holds if $t\in C_i^{\zeta(s), \gamma} \cap \Theta_{\sigma_j(s)}$. For such a $t$, $\ord(t-\zeta(s)) = \gamma$ and $\ord(t-\sigma_j(s)) = \rho_{1,\max}(s)$. Hence, one has that
\begin{eqnarray*}
\ord(\zeta(s)-\sigma_j(s)) &=& \ord\big((\zeta(s)-t)+(t-\sigma_j(s))\big)\\ & \geqslant &\min\big\{\ord(\zeta(s)-t),\ord(t-\sigma_j(s))\big\}\\ &=& \min\{\gamma, \rho_{1,\max}(s)\}.
\end{eqnarray*}
We will now first consider the leaves of $C_i^{\zeta(s)}$ for which $\gamma \geqslant \rho_{1,\text{max}}$. For these we have the following claim: 
\begin{claim}
There exist at most $q_K^{m_1}$ leaves $\Theta_{\sigma_j(s)}$ (with $\sigma_j(s) \in \Sigma^{(1)}_s$), for which \[\left(\bigcup_{\gamma \geqslant \rho_{1, \text{max}}(s) }C_i^{\zeta(s), \gamma}\right) \cap \Theta_{\sigma_j(s)} \neq \emptyset.\]
\end{claim}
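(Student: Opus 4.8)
The plan is to reduce the claim to elementary counting of disjoint sub-balls inside a fixed ball. First I would note that every leaf $C_i^{\zeta(s),\gamma}$ with $\gamma \geqslant \rho_{1,\max}(s)$ consists of elements $t$ with $\ord(t-\zeta(s)) = \gamma \geqslant \rho_{1,\max}(s)$, so that
\[
\bigcup_{\gamma \geqslant \rho_{1,\max}(s)} C_i^{\zeta(s),\gamma} \ \subseteq \ B_{\rho_{1,\max}(s)}(\zeta(s)).
\]
Thus it suffices to bound the number of top leaves $\Theta_{\sigma_j(s)}$ (with $\sigma_j(s) \in \Sigma^{(1)}_s$) that meet the single ball $B_{\rho_{1,\max}(s)}(\zeta(s))$.

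Next I would use the fact recalled in the excerpt that a leaf at height $\gamma$ is a ball of radius $\gamma + m$; hence $\Theta_{\sigma_j(s)}$, being the top leaf of the $1$-cell fiber $C_1^{\sigma_j(s)}$ at height $\rho_{1,\max}(s)$, is a ball of radius $\rho_{1,\max}(s) + m_1$. Since $m_1 \geqslant 1$, this radius is strictly larger than $\rho_{1,\max}(s)$, so $\Theta_{\sigma_j(s)}$ is strictly smaller than $B_{\rho_{1,\max}(s)}(\zeta(s))$. By the ultrametric (tree) structure of $K$, any two balls are either disjoint or nested; therefore, if $\Theta_{\sigma_j(s)}$ meets $B_{\rho_{1,\max}(s)}(\zeta(s))$, it is in fact contained in it, and so it coincides with one of the balls of the canonical partition of $B_{\rho_{1,\max}(s)}(\zeta(s))$ into balls of radius $\rho_{1,\max}(s) + m_1$.

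Finally, I would observe that this canonical partition has exactly $q_K^{m_1}$ parts (a ball of radius $\gamma$ is a disjoint union of $q_K$ balls of radius $\gamma+1$, and one iterates $m_1$ times). Since $\sigma_j$ and $\sigma_{j'}$ are non-equivalent for $j \neq j'$, the leaves $\Theta_{\sigma_j(s)}$ are pairwise disjoint, so at most $q_K^{m_1}$ of them can occur among these $q_K^{m_1}$ parts, which proves the claim. I do not anticipate a genuine obstacle here; the only points requiring care are that $\rho_{1,\max}(s)$ is well-defined (the relevant cell fibers $C_1^{\sigma_j(s)}$ are taken nonempty) and that $m_1 \geqslant 1$, which is exactly what makes the radius comparison strict.
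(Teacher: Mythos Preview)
Your argument is correct and follows essentially the same route as the paper: both reduce to counting balls of radius $\rho_{1,\max}(s)+m_1$ inside the single ball $B_{\rho_{1,\max}(s)}(\zeta(s))$. The only cosmetic difference is that the paper uses the preliminary estimate $\ord(\zeta(s)-\sigma_j(s))\geqslant\min\{\gamma,\rho_{1,\max}(s)\}$ to place the \emph{centers} $\sigma_j(s)$ in that ball and then counts their equivalence classes, whereas you bound the union of high leaves directly and count the \emph{top leaves} $\Theta_{\sigma_j(s)}$ themselves; since both families consist of pairwise disjoint balls of radius $\rho_{1,\max}(s)+m_1$, the two counts coincide.
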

Note that the above intersection will be empty unless $\rho(s) \geqslant \rho_{1,\text{max}}(s)$. Now, if $C_i^{\zeta(s), \gamma} \cap \Theta_{\sigma_j(s)}$ is nonempty for some center $\sigma_j(s)$ and some $\gamma \geqslant \rho_{1, \text{max}}$, then it must hold that 
\begin{equation*} \label{eq:whatever}\ord(\zeta(s) - \sigma_j(s)) \geqslant \rho_{1,\text{max}}.\end{equation*} Moreover, there can at most be $q_K^{m_1}$ non-equivalent centers with this property. 
Our claim follows immediately from this observation.
\\\\
For the remaining leaves of $C_i^{\zeta(s)}$, one has that
\begin{claim}
Let $\gamma < \rho_{1,\text{max}}(s)$. If there exists $\sigma_j \in \Sigma^{(1)}$ 
such that $C_i^{\zeta(s), \gamma} \cap \Theta_{\sigma_j(s)}$ is nonempty, then either $\gamma$ is a branching height of $\Sigma^{(1)}_s$, or $\gamma = \rho(s)$.
\end{claim}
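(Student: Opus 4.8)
The plan is to reduce the claim to two applications of the sharp ultrametric equality. First I would pick a point $t \in C_i^{\zeta(s),\gamma} \cap \Theta_{\sigma_j(s)}$; as already recorded in the setup, $\ord(t-\zeta(s))=\gamma$ and $\ord(t-\sigma_j(s))=\rho_{1,\max}(s)$. Since $\gamma < \rho_{1,\max}(s)$ by hypothesis these two valuations differ, so writing $\zeta(s)-\sigma_j(s) = (\zeta(s)-t)+(t-\sigma_j(s))$ gives $\ord(\zeta(s)-\sigma_j(s))=\gamma$ exactly.

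Next I would compare $\gamma$ with $\rho(s)=\max_{c\in\Sigma^{(1)}_s}\ord(\zeta(s)-c)$. Since $\sigma_j(s)\in\Sigma^{(1)}_s$, the previous step shows $\gamma\leqslant\rho(s)$. If $\gamma=\rho(s)$ we land in the second alternative and are done; otherwise $\gamma<\rho(s)$, so there is some $c\in\Sigma^{(1)}_s$ with $\ord(\zeta(s)-c)>\gamma$ (such a $c$ exists whether or not the maximum is attained, and also when $\rho(s)=\infty$, since $\Gamma_K$ is discrete). A second application of the ultrametric equality to $\sigma_j(s)-c = (\sigma_j(s)-\zeta(s))+(\zeta(s)-c)$, whose summands have valuations $\gamma$ and strictly more than $\gamma$, yields $\ord(\sigma_j(s)-c)=\gamma$.

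Finally I would observe that $\sigma_j(s)$ and $c$ are not $(C_1,\Sigma^{(1)}_s)$-equivalent: by the description of equivalence classes recalled right after Definition~\ref{def:equivalence}, equivalent elements of $\Sigma^{(1)}_s$ lie in a common ball of radius $\rho_{1,\max}(s)+m_1$, whereas here $\ord(\sigma_j(s)-c)=\gamma<\rho_{1,\max}(s)\leqslant\rho_{1,\max}(s)+m_1$. Thus $\gamma=\ord(\sigma_j(s)-c)$ witnesses two non-equivalent centers in $\Sigma^{(1)}_s$, i.e.\ $\gamma$ is a branching height of $\Sigma^{(1)}_s$, the first alternative. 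I do not anticipate a genuine obstacle; the only care needed is keeping track of which valuation is strictly larger (so that the sharp ultrametric identities apply rather than mere inequalities) and dispatching the case $\rho(s)=\infty$, both of which are routine.
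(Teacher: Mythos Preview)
Your proposal is correct and follows essentially the same route as the paper: both arguments use the sharp ultrametric equality first to deduce $\ord(\zeta(s)-\sigma_j(s))=\gamma$ from $\gamma<\rho_{1,\max}(s)$, then split on whether $\gamma=\rho(s)$ or $\gamma<\rho(s)$, and in the latter case apply the ultrametric equality again with a $c\in\Sigma^{(1)}_s$ satisfying $\ord(\zeta(s)-c)>\gamma$ to conclude $\ord(\sigma_j(s)-c)=\gamma$. Your version is slightly more explicit about why $c$ and $\sigma_j(s)$ are non-equivalent and about the case $\rho(s)=\infty$, but these are cosmetic differences.
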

\noindent\begin{minipage}{0.45 \textwidth}
\begin{tikzpicture}[scale = 0.8]
\draw [dashed] (0,1) -- (7,1);
\draw [dashed] (0,2.5) -- (7,2.5);
\draw [dashed] (0,3.8) -- (7,3.8);
\node[{left}] at (0,1){\footnotesize$\gamma_j(s)$};
\node[{left}] at (0,2.5){\footnotesize$\gamma_i(s)$};
\node[{left}] at (0,3.8){\footnotesize$\rho(s)$};

\draw[line width = 1.3 pt] (5,0.5) -- (5,1);
\draw[line width = 1.3 pt] (5,1) -- (3,2.5);
\draw (5,1) -- (5,1.8);
\draw[dotted, line width=0.75pt] (5,1.8) -- (5,2.2);
\draw (5,1) -- (5.8,1.8);
\draw[dotted, line width=0.75pt] (5.8,1.8) -- (6.1,2.1);

\draw[line width = 1.3pt] (2,3.8) -- (0,5);

\draw (2,3.8) -- (1.7,4.2);
\draw[line width = 1.3pt] (3,2.5) -- (2,3.8);
\draw (3,2.5) -- (3,3.5);
\draw[dotted, line width=0.75pt] (3,3.5) -- (3,4);
\draw (3,2.5) -- (4,3.5);
\draw[dotted, line width=0.75pt] (4,3.5) -- (4.3,3.8);

\draw (1.7,4.2) -- (1.3,5);
\draw (1.7,4.2) -- (1.7,5);
\draw (1.7,4.2) -- (2.1,5);



\node[{above}] at (0,5){\scriptsize$\zeta(s)$};
\node[{above}] at (1.15,5){\scriptsize$\sigma_1(s)$};
\node[{above}] at (2.42,5){\scriptsize$\sigma_3(s)$};
\end{tikzpicture}
\end{minipage}
\begin{minipage}{0.55 \textwidth}
Since $\gamma < \rho_{1,\max}(s)$, we must have that \[\ord(\zeta(s)-\sigma_j(s)) = \gamma.\] Note that by the definition of $\rho(s)$, we have that $\rho(s) \geqslant \gamma$. Now if $\rho(s) > \gamma$, there exists $c \in \Sigma^{(1)}_s$ such that $\ord(\zeta(s)-c) > \gamma$. We have to show that in this case $\gamma$ is a branching point. This holds since
\end{minipage}
\begin{eqnarray*}
\ord(c - \sigma_j(s)) &=& \ord\big((c-\zeta(s)) + (\zeta(s)-\sigma_j(s))\big)\\ &\geqslant& \min\big(\ord(c-\zeta(s)), \ord(\zeta(s)-\sigma_j(s)\big)\\ &=& \gamma.
\end{eqnarray*}
Again, since $\ord(c-\zeta(s)) > \gamma = \ord(\zeta(s)-\sigma_j(s))$, this must be an equality. Therefore, $c$ and $\sigma_j(s)$ are nonequivalent centers of $\Sigma^{(1)}_s$ that branch at height $\gamma$.
\\\\
We will also need to use the following.
\begin{claim}
Let $\gamma < \rho_{1, \text{max}}(s)$. 
Then a leaf $C_i^{\zeta(s), \gamma}$ can intersect at most $q_K^{m_1}$ balls $\Theta_{\sigma_{j}(s)}$. 
\end{claim}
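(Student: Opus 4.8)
We want to show that for $\gamma < \rho_{1,\max}(s)$, a single leaf $L := C_i^{\zeta(s),\gamma}$ meets at most $q_K^{m_1}$ of the pairwise disjoint balls $\Theta_{\sigma_j(s)}$. The starting observation, already recorded before the previous claims, is that if $t \in L \cap \Theta_{\sigma_j(s)}$ then $\ord(t - \zeta(s)) = \gamma$ and $\ord(t - \sigma_j(s)) = \rho_{1,\max}(s)$, and since $\gamma < \rho_{1,\max}(s)$ the ultrametric inequality forces $\ord(\zeta(s) - \sigma_j(s)) = \gamma$. In particular every such $\sigma_j(s)$ lies in the single ball $B_\gamma(\zeta(s))$, and moreover $\Theta_{\sigma_j(s)} \subseteq B_\gamma(\zeta(s))$ as well (the leaf is a ball of radius $\rho_{1,\max}(s) + m_1 > \gamma$ containing $\sigma_j(s)$... more precisely it is the ball of that radius around any of its points, and it is at distance $\gamma$ from $\zeta(s)$).

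**Key step.** The leaf $L$ is itself a ball, namely $L = B_{\gamma + m_1}(t_0)$ for any $t_0 \in L$ (a leaf at height $\gamma$ of a $1$-cell with parameter $m_1$ corresponds to a ball of radius $\gamma + m_1$, as recalled in the discussion of cell fibers). So $L$ contains $t \in \Theta_{\sigma_j(s)}$ exactly when $\ord(t - \sigma_j(s)) = \rho_{1,\max}(s)$ and $\ord(t - \zeta(s)) = \gamma$, i.e. when $\Theta_{\sigma_j(s)} \cap L \neq \emptyset$. I would now count: the balls $\Theta_{\sigma_j(s)}$ meeting $L$ have radius $\rho_{1,\max}(s) + m_1$ and are pairwise disjoint; each one that meets $L$ is in fact contained in $B_{\gamma+1}(t)$ for the relevant $t$? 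No — let me instead count directly inside $L$. Each $\Theta_{\sigma_j(s)}$ meeting $L$ is contained in $L$ (two balls with nonempty intersection are nested, and $\Theta_{\sigma_j(s)}$, of radius $\rho_{1,\max}(s)+m_1$, cannot contain $L$ of radius $\gamma + m_1$ since $\gamma < \rho_{1,\max}(s)$). So we have pairwise disjoint balls of radius $\rho_{1,\max}(s) + m_1$ sitting inside $L = B_{\gamma+m_1}(t_0)$. The number of disjoint balls of radius $\rho_{1,\max}(s)+m_1$ inside a ball of radius $\gamma + m_1$ is $q_K^{(\rho_{1,\max}(s)+m_1) - (\gamma+m_1)} = q_K^{\rho_{1,\max}(s) - \gamma}$, which is not obviously bounded by $q_K^{m_1}$. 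So the naive volume count is too weak; the gain must come from the fact that the $\sigma_j(s)$ are \emph{non-equivalent} potential centers and that their pairwise valuations are constrained — specifically, they all lie at distance exactly $\gamma$ from $\zeta(s)$, hence pairwise at distance $> \gamma$, i.e. they all lie in a \emph{single} successor-ball of $B_\gamma(\zeta(s))$ within the tree $T(\Sigma^{(1)}_s)$. Combined with the density assumption (all $R$-signatures equal $(q_K,\dots,q_K)$, $R > m_1$), the branching heights below $\gamma$ are spaced so that within $m_1$ steps of the tree one passes from distance-$\gamma$ clusters down to individual equivalence classes: the number of equivalence classes whose representatives are pairwise at valuation exactly $\gamma$ from a common point and that can simultaneously meet one leaf of width $m_1$ is at most $q_K^{m_1}$.

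**Main obstacle.** The delicate part is exactly this last counting argument: translating "these centers branch off at height $\gamma$ from $\zeta(s)$ and their leaves all meet the single leaf $L$ of height $\gamma$" into the bound $q_K^{m_1}$, rather than the larger $q_K^{\rho_{1,\max}(s)-\gamma}$. I expect the resolution is that one should not measure the leaves $\Theta_{\sigma_j(s)}$ against $L$ directly, but rather look at where the distinct $\sigma_j(s)$ enter $L$: a point $t \in L \cap \Theta_{\sigma_j(s)}$ satisfies $\ord(t - \sigma_j(s)) = \rho_{1,\max}(s) \geq \gamma + 1$, but also the relevant structural fact is $t - \zeta(s) \in \lambda_i Q_{n_i,m_i}$ coming from the cell condition $C_i$, so the residues $\acm$-class pins $t$ down modulo $\pi^{m_1}$ (here using $R > m_1$ and that the tree is dense so the next $m_1$ branching heights below $\gamma$ are "full", i.e. at consecutive values), giving at most $q_K^{m_1}$ choices of ball $\Theta_{\sigma_j(s)}$. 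I would write the proof by fixing one $\sigma_{j_0}(s)$ with $L \cap \Theta_{\sigma_{j_0}(s)} \neq \emptyset$, showing every other such $\sigma_j(s)$ satisfies $\ord(\sigma_j(s) - \sigma_{j_0}(s)) > \gamma$, and then using the density of $T(\Sigma^{(1)}_s)$ together with $R > m_1$ to conclude that the map $\sigma_j(s) \mapsto (\text{first } m_1 \text{ branchings of } \sigma_j(s) \text{ below } \gamma)$ is injective on equivalence classes and has image of size at most $q_K^{m_1}$.
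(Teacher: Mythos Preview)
Your proposal has a genuine gap. You correctly identify that the naive volume count (disjoint balls of radius $\rho_{1,\max}(s)+m_1$ inside a ball of radius $\gamma+m_1$) only gives $q_K^{\rho_{1,\max}(s)-\gamma}$, which is far too large. But your proposed repair via tree density and the $R$-signature does not work. The $\acm$-condition from $C_i$ fixes $t$ modulo $\pi^{m_i}$ (not $m_1$), and in any case this merely says $t$ lies in the single ball $L$ of radius $\gamma+m_i$, which is no new information. Your final suggestion, the map $\sigma_j(s)\mapsto(\text{first }m_1\text{ branchings below }\gamma)$, has no reason to be injective: tree density says the tree is \emph{wide} at each level, not that centers whose top leaves land in $L$ are \emph{few}.

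The key idea you are missing is condition (iv) in the definition of a cell array (Definition~\ref{def:array}). The paper's argument runs as follows. Take two non-equivalent centers $\sigma_j(s),\sigma_{j'}(s)$ whose top leaves both meet $L$; since both $\Theta_{\sigma_j(s)}$ and $\Theta_{\sigma_{j'}(s)}$ are contained in the ball $L$, so is the smallest ball $B_{j,j'}$ containing them, and $B_{j,j'}$ has radius $\gamma_{j,j'}:=\ord(\sigma_j(s)-\sigma_{j'}(s))$. If $\gamma_{j,j'}\leqslant\rho_{1,\max}(s)$, then $B_{j,j'}\subseteq X_s$ is a ball containing the potential center $\sigma_j(s)$ of radius $\leqslant\rho_{1,\max}(s)$, contradicting condition (iv). Hence $\gamma_{j,j'}>\rho_{1,\max}(s)$ for \emph{every} such pair. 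Since equivalence classes are balls of radius $\rho_{1,\max}(s)+m_1$, non-equivalence forces $\gamma_{j,j'}<\rho_{1,\max}(s)+m_1$, so all the relevant centers lie in a single ball of radius $\rho_{1,\max}(s)+1$, giving at most $q_K^{m_1}$ equivalence classes. The density reduction from Lemma~\ref{lemma:densetrees} plays no role in this claim; it is used only later in the global counting.
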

Fix some $\gamma < \rho_{1, \text{max}}(s)$ for which there are at least two non-equivalent centers $\sigma_j(s), \sigma_{j'}(s)$ such that 
\begin{equation}\label{eq:whatever2}
C_i^{\zeta(s), \gamma} \cap \Theta_{\sigma_{j}(s)} \neq \emptyset \quad  \text{and} \quad C_i^{\zeta(s), \gamma} \cap \Theta_{\sigma_{j'}(s)} \neq \emptyset
\end{equation}
 (for other values of $\gamma$ there is nothing to prove).
Let $B_{j,j'}$ denote the smallest ball containing both $\Theta_{\sigma_{j}(s)}$ and $\Theta_{\sigma_{j'}(s)}$. 
Since $\Theta_{\sigma_j(s)}$ and $\Theta_{\sigma_{j'}(s)}$ are disjoint, \eqref{eq:whatever2} implies that  $B_{j,j'} \subset C_i^{\zeta(s), \gamma}$.
\\\\
 Put $\gamma_{j,j'}: \ord(\sigma_j(s) - \sigma_{j'}(s))$, and note that $\gamma_{j,j'}$ is a branching height of $\Sigma^{(1)}_s$. We need to consider the location of this branching height $\gamma_{j,j'}$ versus $\rho_{1, \text{max}}(s)$. 
\\\\
 \begin{minipage}{0.55 \textwidth}
First suppose that $\gamma_{j,j'} \leqslant\rho_{1,\text{max}}(s)$.
In this situation, we find that $B_{j,j'} = B_{\gamma_{j,j'}(s)}(\sigma_j(s))$.
 \\
 Since $B_{j,j'}$ contains centers, but $\gamma_{j,j'} \leqslant \rho_{1,\text{max}}(s)$, we obtain a contradiction to condition (iv) from the definition of cell array (Definition \ref{def:array}). Hence, condition \eqref{eq:whatever2} can never be satisfied in this case.
 \end{minipage}
 \begin{minipage}{0.1 \textwidth} \end{minipage}
 \begin{minipage}{0.5 \textwidth}

 \begin{tikzpicture}[yscale = 0.6]
\draw [dashed] (0.8,1) -- (6.3,1);
\draw [dashed] (0.8,4.7) -- (6.3,4.7);
\node[{right}] at (6.3,1){\footnotesize$\gamma_{j,j'}$};
\node[{right}] at (6.3,4.7){\footnotesize$\rho_{1,\text{max}}(s)$};

\draw (2.29,4) -- (3.1,5.3);
\draw (2.29,4) -- (2.29, 5.3);
\draw (4.4,2.95) -- (3.9,5.3);
\draw [line width = 0.3pt](1.85,4.7) -- (1.25, 5);
\path [fill = grey] (1.25,5) -- (0.6,5.35) -- (0.9,5.65) -- cycle;
\path [fill = grey] (6.3,5) -- (6.6,5.65) -- (6.95,5.35) -- cycle;
\draw  [line width = 0.3pt](5.72, 4.7) -- (6.3, 5);

\draw [line width=1.3pt] (4,0) -- (4,1);
\draw [line width=1.3pt] (4,1) -- (1.5,5.3);
\draw (4,1) -- (4.9,5.3);

\draw [line width=1.3pt] (4,1) -- (6, 5.3);

\node[{above}] at (1.5,5.3){\footnotesize$\sigma_j(s)$};
\node[{above}] at (6,5.3){\footnotesize$\sigma_{j'}(s)$};
\end{tikzpicture}
 \end{minipage}
 
 Now consider the case where $\gamma_{j,j'} > \rho_{1, \text{max}}(s)$. This condition expresses that $\sigma_j(s)$ and $\sigma_{j'}(s)$ branch above
 $\rho_{1, \text{max}}(s)$. There can be at most $m_1$ such branching heights, and hence the leaf $C_i^{\zeta(s), \gamma}$ can intersect at most $q_K^{m_1}$ balls $\Theta_{\sigma_j}(s)$. This proves the claim.
 \\\\
\begin{minipage}{0.52 \textwidth} After a possible reordering, we can assume that the elements $\sigma_j(s) \in \Sigma^{(1)}_s$ are ordered in such a way that for each $l \leqslant R$, the potential centers $\sigma_1(s), \ldots, \sigma_{q_k^l}(s)$ generate a finite tree of depth $l$. \\\\ The picture shows an example for $q_K =3$ and $l =2$.
\end{minipage}
\begin{minipage}{0.05 \textwidth}
\qquad
\end{minipage}
\begin{minipage}{0.3 \textwidth}
\begin{tikzpicture}[xscale = 0.95, yscale = 0.75]
\draw [dashed] (0,1) -- (6,1);
\draw [dashed] (0,2.5) -- (6,2.5);
\draw [dashed] (0,4.2) -- (6,4.2);
\node[{left}] at (0,1){\footnotesize$\gamma_3$};
\node[{left}] at (0,2.5){\footnotesize$\gamma_2$};
\node[{left}] at (0,4.2){\footnotesize$\gamma_1$};

\draw (4,0.5) -- (4,1);
\draw (4,1) -- (2,2.5);
\draw (4,1) -- (4,1.8);
\draw[dotted, line width=0.75pt] (4,1.8) -- (4,2.2);
\draw (4,1) -- (4.8,1.8);
\draw[dotted, line width=0.75pt] (4.8,1.8) -- (5.1,2.1);

\draw (2,2.5) -- (0.7,4.2);
\draw (2,2.5) -- (2,4.2);
\draw (2,2.5) -- (3.3,4.2);

\draw (0.7,4.2) -- (0.3,5);
\draw (0.7,4.2) -- (0.7,5);
\draw (0.7,4.2) -- (1.1,5);

\draw (2,4.2) -- (1.6,5);
\draw (2,4.2) -- (2,5);
\draw (2,4.2) -- (2.4,5);

\draw (3.3,4.2) -- (2.9,5);
\draw (3.3,4.2) -- (3.3,5);
\draw (3.3,4.2) -- (3.7,5);

\node[{above}] at (0.25,4.9){\scriptsize$\sigma_1$};
\node[{above}] at (0.7,4.9){\scriptsize$\sigma_2$};
\node[{above}] at (1.12,4.9){\scriptsize$\sigma_3$};
\node[{above}] at (1.58,4.9){\scriptsize$\sigma_4$};
\node[{above}] at (2,4.9){\scriptsize$\sigma_5$};
\node[{above}] at (2.42,4.9){\scriptsize$\sigma_6$};
\node[{above}] at (2.88,4.9){\scriptsize$\sigma_7$};
\node[{above}] at (3.3,4.9){\scriptsize$\sigma_8$};
\node[{above}] at (3.75,4.9){\scriptsize$\sigma_9$};
\end{tikzpicture}
\end{minipage}
\\\\\\
Now consider, for $m_1 <l<R$, the depth $l$ subtree of $T(\Sigma^{(1)}_s)$ defined above.  
Combining the claims above, we can conclude that a single cell $C_i^{\zeta(s)}$  can never intersect more than $q_K^{m_1} + (l+1)q_K^{m_1} = (l+2) q_K^{m_1}$ top leaves $\Theta_{\sigma_j(s)}$ from this subtree (and a more careful count would probably show that this upper bound is too high). Since, for the given tree of depth $l<R$, there exist $q_K^l$ disjoint leaves $\Theta_{\sigma_{j}(s)}$, we can conclude that at least $\frac{q_K^{l-m_1}}{l+2}$ cell conditions are required to account for all top leaves. Hence, we obtain a contradiction when $l$ is sufficiently big, given that there are only a fixed number of cell conditions. 
We conclude that there cannot exist $s \in S$ for which the  the number of non-equivalent centers for $\Sigma^{(1)}_s$ is not bounded.
\end{proof}


\section{Regularity} \label{sec:regularity}

The main purpose of this section is to prove Proposition \ref{prop:regular_cell_array}, which establishes that a cell array can be partitioned into finitely many \emph{regular} cell arrays. 
A formal definition will be given in Subsection \ref{subsec:regular_arrays} (see Definition \ref{def:regular_array}). 
We start with some preliminaries needed to prove Proposition \ref{prop:regular_cell_array}. 

\subsection{Repartitionings}

Let $(\{C_i\}_{i\in I},\Sigma)$ be a cell array defining a set $X$. In this subsection we describe three procedures to obtain 
a new cell array $(\{C_i'\}_{i\in I'},\Sigma')$ that defines the same set $X$. These procedures are called \emph{repartitionings of $(\{C_i\}_{i\in I},\Sigma)$} and will be used often in what follows. Some care is needed to make sure that the new pair $(\{C_i'\}_{i\in I'},\Sigma')$ still  satisfies all conditions from the definition of a cell array (Definition \ref{def:array}). The details are given in the following lemma-definition.

\begin{lemma-definition} \label{lem-def:repartitioning}
Let $\mathcal{A}=(\{C_i\}_{1 \leqslant i \leqslant r}, \Sigma)$ be a cell array over $S$ defining a set $X$. 
\begin{itemize}
\item[(a)] Let $\delta: S \to \Gamma_K$ be a definable function. Given a cell condition $C_i$, there exists a definable set $\Sigma' \subseteq S \times K^{r+1}$ such that 
\[\mathcal{A}':=(\{C_1, \ldots, C_{i-1}, C_{i|(\alpha_i, \delta)}, C_{i|(\delta-1, \beta_i)}, C_{i+1},\ldots, C_r\}, \Sigma')\] 
is a cell array defining the same set $X$. 
\item[(b)] Given a cell condition $C_i$, and $\ell \in \N \setminus\{0\}$, let $C_{i,j}$, for $0\leqslant j < \ell$ be the cell condition
\[ C_{i,j}(s,c,t) := \alpha_i(s) \ \square_1 \ \ord(t-c) \ \square_2 \ \beta_i(s) \wedge t-c \in \pi^{jn} \lambda Q_{\ell n_i,m_i}.\]
There exists a definable set $\Sigma' \subseteq S \times K^{r+\ell-1}$ such that 
\[\mathcal{A}':=(\{C_1, \ldots, C_{i-1}, C_{i,0}, \ldots,C_{i,\ell-1}, C_{i+1},\ldots, C_r\}, \Sigma')\] 
is a cell array defining the same set $X$.
\item[(c)] Given a cell condition $C_i$, and $\ell' \in \N$,  let $C_{i,j}$ denote the cell condition
\[ C_{i,j}(s,c,t) := \alpha_i(s) \ \square_1 \ \ord(t-c) \ \square_2 \ \beta_i(s) \wedge t-c \in  \lambda_j Q_{ n_i,m_i+\ell'},\]
where the elements $\lambda_j$ are representatives of each of the $q_K^{\ell'}$ disjoint subballs of size ($\ord \lambda + m + \ell'$) of $B_{\ord \lambda +m}(\lambda)$.
Put $r':= q_{K}^{\ell'}$. 
There exists a definable set $\Sigma' \subseteq S \times K^{r+r'-1}$ such that the repartitioning 
\[\mathcal{A}':=(\{C_1, \ldots, C_{i-1}, C_{i,1, }\ldots,C_{i, r'}, C_{i+1},\ldots, C_r\}, \Sigma')\] 
is a cell array defining the same set $X$. \qedhere
\end{itemize}
\end{lemma-definition}
\begin{proof}
First consider part $(a)$. We will show how to define a set $\Sigma'$ such that conditions $(i)$ and $(iv)$ from the definition of cell array are still satisfied for the repartitioning.  Conditions $(ii)$ and $(iii)$ are left to the reader (but they should be rather obvious). Write $\rho_{(\alpha_i, \delta), \text{max}}(s)$ for the height of the top leaf for fibers of $C_{i|(\alpha, \delta)}$. First put
\[D_{i,s}:= \{ c\in K \mid \exists c' \in \Sigma^{(i)}_s: \ord(c-c') \geqslant \rho_{(\alpha_i, \delta), \text{max}}(s) + m_i\}.\]
Now, put $\zeta:= (\zeta_1, \ldots, \zeta_{i-1}, \zeta', \zeta_{i}, \ldots, \zeta_r)$, and let $\Sigma'$ be the set
\[\Sigma':=\{(s, \zeta) \in S \times K^{r+1} \mid \zeta_j \in \Sigma^{(j)}_s \wedge \zeta' \in D_{i,s} \wedge  
\phi(s,\zeta) = X_s\},\]
where $\phi(s,\zeta)$ is the formula expressing that the centers $\zeta$ induce a partition of $X_s$:
\[\phi(s,\zeta):= \left[\bigcup_{j\neq i} C_j^{\zeta_j} \ \cup C_{i|(\alpha_i, \delta)}^{\zeta'}\cup C_{i|(\delta-1, \beta_i)}^{\zeta_i} = X_s\right].\]
It should be clear that with this set $\Sigma'$, the repartitioning still defines the same set $X$, and that condition $(i)$ still holds. 

It remains to check condition $(iv)$. Note that there is only something to prove for the cell condition $C_{i|(\alpha_i, \delta)}$. Fix an $s \in S$. The set of centers for the clustered cell fiber associated to $s$ and $C_{i|(\alpha_i, \delta)}$ is then $D_{i,s}$. Suppose towards a contradiction that $(iv)$ is not satisfied for some $c \in D_{i,s}$, i.e. that $X_s$ contains a ball $B_\gamma(c)$, for some $\gamma \leqslant \rho_{(\alpha_i, \delta), \text{max}}(s)$. By construction, there exists $\zeta_i \in \Sigma^{(i)}_s$ such that $c$ and $\zeta_i$ are 
$(C_i, \Sigma^{(i)}_s)$-equivalent. However, this implies that $\zeta_i \in B_\gamma(c)$. But since $\zeta_i$ was already a potential center for the clustered cell $C_i^{\Sigma^{(i)}}$ induced by the original cell array, this contradicts condition $(iv)$ for the original cell array.

For case $(b)$, we will assume that $i =1$ to ease the notation, but the same idea can obviously be applied for other components.
For $0\leqslant j <r$, let $\rho_{1j, \text{max}}(s)$ denote the height of the top leaf for fibers of $C_{1,j}$. Let $D_{j,s}$ be the set
\[D_{j,s}:= \{c_j \in  K \mid \exists c' \in \Sigma^{(1)}_s: \ord(c_j-c')\geqslant \rho_{1j,\text{max}}(s) +m_1\},\]
and put $\zeta:= (c_0, \ldots, c_{\ell-1},\zeta_2, \ldots, \zeta_{r})$.  Now, let $\Sigma'$ be the set
\[ \Sigma' :=\{(s, \zeta) \in S \times K^{r + \ell-1} \mid c_j \in  D_{j,s} \wedge \zeta_i \in \Sigma^{(i)}_s \wedge \phi(s,\zeta)\},\]
where $
\phi(s,\zeta)$ is the formula
\[\phi(s,\zeta) := \left[\bigcup_{j=0}^{\ell-1} C_{1j}^{c_j} \cup \bigcup_{j=2}^rC_j^{\zeta_j} = X_s\right].\]
We leave it to the reader to check that all conditions are satisfied in this case.

For $(c)$, the set $\Sigma'$ can be defined in a similar way. Note that in this case, the potential centers for the new cells $C_{i,j}$ are the same ones as for the old $C_i$, but each equivalence class splits in $q_K^{\ell'}$ smaller equivalence classes. Since there are no `new' centers, and the value of $\rho_{i,\text{max}}$ does not change, condition (iv) from the definition of cell array will be preserved. 
\end{proof}

\subsection{Regular cell arrays}\label{subsec:regular_arrays}

In order to give the formal definition of regularity we need the following definitions first. 

\begin{definition}\label{def:uniform_tree_structure} A clustered $C^\Sigma$ over $S$ is said to have  \emph{uniform tree structure} if for all $s,s'\in S$, the trees $T(\Sigma_s)$ and $T(\Sigma_{s'})$ are isomorphic. 
\end{definition}

Here, a function $f:T_1\to T_2$ between trees $T_1$ and $T_2$ is a tree isomorphism if $f$ is a bijection and both $f$ and $f^{-1}$ are order preserving. We will also need the following additional definitions for types of clustered cells. 

\begin{definition} Let $C^{\Sigma}$ be a clustered cell. Then $C^{\Sigma}$ is said to be 
 \begin{itemize}
 \item \emph{large ($M$-large)}, if there exists $M \in \N$ with $M>1$, such that $|\alpha(s) - \beta(s)|> M$ for all $s \in S$; 
 \item \emph{uniformly bounded ($M$-bounded)}, if there exists some $M \in \N$ with $M\geqslant1$, such that ${|\alpha(s) - \beta(s)|\leqslant M}$ for all $s \in S$;
 \item \emph{small}, if there exists a definable function $\gamma: S \to \Gamma_K$, such that for any potential center $\sigma:S \to K$, $C^{\sigma}$ is of the form
 \[C^{\sigma} = \{(s,t) \in S \times K \mid \ord(t-\sigma(s)) = \gamma(s) \wedge t-\sigma(s) \in \lambda Q_{n,m}\}.\qedhere\]
 \end{itemize}
 \end{definition}
\noindent We are now ready to define regular cell arrays. 
\begin{definition}\label{def:regular_array} A cell array $(\{C_i\}_{i\in I},\Sigma)$ is said to be \emph{regular} if it satisfies the following conditions:
\begin{enumerate}
\item[(R1)] There exists $n,m\in \NN$ such that all cell conditions are described using the same set $Q_{n,m}$.
\item[(R2)] For $i,i' \in I$, either $(\alpha_i(s), \beta_i(s)) \cap (\alpha_{i'}(s),\beta_{i'}(s)) = \emptyset$ for all $s \in S$ , or $(\alpha_i(s), \beta_i(s)) = (\alpha_{i'}(s),\beta_{i'}(s))$ for all $s \in S$; cell conditions $C_i, C_{i'}$ that share the same interval will be called \emph{parallel}.
\item[(R3)] There is a natural ordering on the cell conditions, that is,  either two cells are parallel, or, for any two non-parallel cells $C_i$ and $C_{i'}$, we
have that either $C_i$ lies \emph{on top of} $C_{i'}$ (if $\beta_{i'}(s) \leqslant \alpha_i(s) +1$) or $C_i$ lies \emph{below} $C_{i'}$ (if $\beta_i(s) \leqslant \alpha_{i'}(s) +1$).
\item[(R4)] If $C_i$ and $C_{i'}$ are copies of the same cell condition, then $\Sigma^{(i)} = \Sigma^{(i')}$.
\item[(R5)] For each $i\in I$, the clustered cell $C_i^{\Sigma^{(i)}}$ has uniform tree structure. 
\item[(R6)] If $C_i$ is large and $\gamma(s)$ is a branching height of $\Sigma^{(i)}_s$, then $\gamma(s) \leqslant \alpha_i(s)$.\qedhere
\end{enumerate}
\end{definition}

\begin{remark}\label{rem:preservation} For $x=\{1,\ldots,6\}$, let $\mathcal{A}=(\{C_i\}_{i\in I},\Sigma)$ be a cell array satisfying condition (R$x$) from Definition \ref{def:regular_array}. If $S$ is partitioned into sets $S_1,\ldots,S_l$, then each cell array $\mathcal{A}_{|S_j}$ also satisfies condition (R$x$). 
In particular, if $\mathcal{A}$ is a regular cell array, then so are the arrays $\mathcal{A}_{|S_j}$. 
\end{remark}

\begin{lemma}\label{lem:uniform_tree_structure} Let $\mathcal{A}=(\{C_i\}_i,\Sigma)$ be a cell array. There is a definable partition of $S$ into sets $S_1,\ldots, S_l$ such that for each $j\in \{1,\ldots,l\}$, each clustered cell in $\mathcal{A}_{|S_j}$ has uniform tree structure. 
\end{lemma}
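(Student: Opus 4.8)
The plan is to prove Lemma~\ref{lem:uniform_tree_structure} component by component, reducing it to a definability-of-trees statement that follows from $P$-minimality and the finiteness bound already established in Proposition~\ref{prop:finitebranching}. Since a finite intersection of definable partitions is again a definable partition, it suffices to treat a single clustered cell $C_i^{\Sigma^{(i)}}$; for notational simplicity take $i=1$ and write $C=C_1$, $\Sigma=\Sigma^{(1)}$. By Proposition~\ref{prop:finitebranching}, there is a uniform bound $N$ on the number of $(C,\Sigma_s)$-equivalence classes, and hence a uniform bound on the number of nodes and on the depth of each tree $T(\Sigma_s)$: indeed a tree with more than $N$ leaves has more than $N$ equivalence classes among its potential centers. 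So all trees $T(\Sigma_s)$ live in a fixed finite list $\mathcal{T}_1,\ldots,\mathcal{T}_M$ of isomorphism types of finite rooted trees (with edge labels recording the branching arity $k\in\{2,\ldots,q_K\}$ at each node).

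First I would make precise how to read off the isomorphism type of $T(\Sigma_s)$ in a first-order way. Using the $d$-signature machinery from Definition~\ref{def-signature} together with the formulas $\phi_k(\sigma(s),\gamma_l(s))$, for a fixed $d\geqslant N$ the full profile of $d$-signatures realised by elements of $\Sigma_s^{(1)}$ — i.e.\ the set $\{(k_1,\ldots,k_d) : \exists\,\sigma\ \text{section of}\ \Sigma,\ \sigma(s)\ \text{has}\ d\text{-signature}\ (k_1,\ldots,k_d)\}$ — determines $T(\Sigma_s)$ up to isomorphism, and for each possible finite profile $P$ the condition ``the profile at $s$ equals $P$'' is first-order in $s$ (it is a finite boolean combination of the sentences $\exists c\in\Sigma_s\ \phi_{k}(c,\gamma_l(s))$ and their negations, and the functions $\gamma_l$ are definable). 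Thus we obtain a definable partition $S=\bigsqcup_{P} S_P$, where $S_P$ is the set of $s$ whose profile is $P$; since there are only finitely many profiles $P$ consistent with the bound $N$, this is a finite partition, and on each $S_P$ the trees $T(\Sigma_s)$ are pairwise isomorphic, which is exactly uniform tree structure. One then intersects these partitions over all $i\in I$ and over all clustered cells appearing in $\mathcal{A}$ (only finitely many), and restricts $\mathcal{A}$ to each piece, invoking the remark after Definition~\ref{def:array} that restriction of a cell array to a definable subset of $S$ is again a cell array (so each $\mathcal{A}_{|S_j}$ is still a cell array, and by construction has uniform tree structure in every component).

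The main obstacle I anticipate is the precise verification that the finite $d$-signature profile really does recover the tree up to order isomorphism, and that this profile is genuinely first-order definable uniformly in $s$ — one must check that the branching-height functions $\gamma_l$ are uniformly definable (this is asserted in the discussion preceding Definition~\ref{def-signature}, where $Y$ is noted to be uniformly definable in $s$ and each $\gamma_l$ is built from it by a Presburger sup/max, using that $P$-minimal subsets of $\Gamma_K$ are Presburger-definable), and that the node/successor structure it encodes is the abstract tree $T(\Sigma_s)$ rather than merely a labelled sub-object of it. A secondary point to handle cleanly is that the bound $N$ from Proposition~\ref{prop:finitebranching} must be translated into a uniform bound on tree \emph{depth}, not just on the number of leaves; but since every node has at least two successors, depth is bounded by $\log_2 N$, so this is immediate. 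With these pieces in place the argument is essentially a definable case-split, and the lemma follows.
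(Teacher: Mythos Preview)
Your overall strategy coincides with the paper's: use Proposition~\ref{prop:finitebranching} to bound the number of equivalence classes (hence the possible isomorphism types of $T(\Sigma_s)$), observe that having a fixed isomorphism type is a definable condition on $s$, and partition accordingly. The paper's own proof is in fact just a two-sentence sketch asserting this definability without justification; you try to supply the missing argument via $d$-signature profiles, and that is where a genuine gap appears.

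The set (indeed even the multiset) of realised $d$-signatures does \emph{not} determine the tree up to isomorphism --- the obstacle you flagged is real. Consider two trees, each with six leaves and three branching heights $\gamma_1>\gamma_2>\gamma_3$. In $T$, the root at $\gamma_3$ has two children at $\gamma_2$, and each of these has one child at $\gamma_1$ splitting into two leaves together with one further leaf-child. In $T'$, the root again has two children at $\gamma_2$; the left one has two children at $\gamma_1$, each splitting into two leaves, while the right one has two leaf-children directly. Both trees realise exactly the signature multiset $\{(2,2,2)^{\times 4},(1,2,2)^{\times 2}\}$, yet the root's subtrees have sizes $3,3$ in $T$ versus $4,2$ in $T'$, so $T\not\cong T'$. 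The fix is to encode the isomorphism type directly rather than through signatures: for each candidate tree with $n$ leaves and $d$ branching levels, fix a labeling of its leaves and record the function $m(i,j)$ giving the level at which leaves $i,j$ meet; then write the formula asserting that $\Sigma_s$ has exactly $n$ equivalence classes, exactly $d$ branching heights, and that there exist pairwise non-equivalent $c_1,\ldots,c_n\in\Sigma_s$ with $\ord(c_i-c_j)=\gamma_{m(i,j)}(s)$ for all $i<j$. This pins down the ultrametric on the leaf set and hence the tree, and with it your argument goes through.
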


\begin{proof}
By Proposition \ref{prop:finitebranching}, there exist only finitely many tree isomorphism types for the trees $T(\Sigma_s^{(i)})$, for all $s\in S$ and all $1\leq i\leq r$. Since the tree isomorphism type of the finite tree $T(\Sigma_s^{(i)})$ is a definable condition, the result follows by a straightforward partitioning of $S$.
\end{proof}

\begin{lemma}\label{lem:regular1} Let $X \subseteq S\times K$ be a set defined by a cell array $\mathcal{A} =(\{C_i\}_i,\Sigma)$. There exist cell arrays $\mathcal{A}_j$, satisfying conditions  (R1) - (R5), such that the induced sets $\mathcal{A}_j(K)$ form a finite partition of $X$.
%
\end{lemma}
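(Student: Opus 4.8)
The plan is to establish conditions (R1)--(R5) of Definition \ref{def:regular_array} one after another, using only two moves that each produce a \emph{finite} family of cell arrays, the members defining a partition of $X$: partitioning the parameter set $S$ (which by Remark \ref{rem:preservation} preserves the cell-array property and every condition already achieved), and the three repartitionings of Lemma-Definition \ref{lem-def:repartitioning} (which replace the array by another one over the same $S$ defining the same set $X$). Recall that in a cell array all $C_i$ are $1$-cell conditions with $\square_1=\square_2={<}$ and all centers are non-zero, so each cell fiber is a bounded family of leaves over the open interval $(\alpha_i(s),\beta_i(s))$. After an initial partition of $S$ we may further assume $(\alpha_i(s),\beta_i(s))\neq\emptyset$ for every $i$ and every $s$ in the relevant piece, deleting any $C_i$ whose interval is empty there (this changes neither the defined set nor the cell-array structure).

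To obtain (R1), put $n:=\operatorname{lcm}(n_1,\dots,n_r)$ and $m:=\max(m_1,\dots,m_r)$; for each $i$ apply repartitioning (b) with $\ell=n/n_i$ and then repartitioning (c) with $\ell'=m-m_i$, replacing $C_i$ by finitely many cell conditions all written with the single set $Q_{n,m}$. For (R2) and (R3), observe that there are only finitely many possible order configurations of the endpoint functions $\{\alpha_i,\beta_i\}_i$ (the relations $<,=,>$ between each pair), each carving out a definable subset of $S$; partition $S$ accordingly. On one such piece, repeatedly apply repartitioning (a) to cut every $C_i$ along each endpoint function of every other $C_j$. After these finitely many cuts the interval of every resulting cell is a maximal interval lying between two consecutive endpoints, so any two such intervals are equal or disjoint uniformly over the piece (this is (R2)); and for two non-parallel cells the disjointness of the bounded nonempty open intervals $(\alpha_i(s),\beta_i(s))$ and $(\alpha_{i'}(s),\beta_{i'}(s))$ in the $\Z$-group $\Gamma_K$ forces $\beta_i(s)\le\alpha_{i'}(s)+1$ or $\beta_{i'}(s)\le\alpha_i(s)+1$, which, holding uniformly since the configuration is fixed, is (R3). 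Repartitioning (a) keeps $Q_{n,m}$ fixed and partitioning $S$ preserves (R1), so (R1) survives.

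The delicate condition is (R4), which I expect to be the main obstacle. Group the components so that $C_i$ and $C_j$ lie in the same group precisely when they are copies of one cell condition. For each $i$ set $\widetilde\Sigma_i:=\bigcup_{j:\,C_j=C_i}\Sigma^{(j)}$, and let
\[
\Sigma':=\bigl\{(s,\zeta_1,\dots,\zeta_r)\in S\times K^r \;\bigm|\; \zeta_i\in(\widetilde\Sigma_i)_s\ \text{for all }i,\ \text{and }(\zeta_i)_i\ \text{induces a partition of }X_s\bigr\},
\]
a definable set (the partition condition is first order, exactly as in the proof of Lemma-Definition \ref{lem-def:repartitioning}). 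Any section of the original $\Sigma$ is a section of $\Sigma'$, so $\Sigma'$ is non-empty and $(\{C_i\}_i,\Sigma')$ still defines $X$. The key point is that $(\Sigma')^{(i)}=\widetilde\Sigma_i$: since $C_i=C_j$ as cell conditions for $i,j$ in the same group, permuting the corresponding components of any section of $\Sigma'$ leaves the collection of cell fibers, hence the partition of $X_s$, unchanged, so it is again a section; thus every center in some $\Sigma^{(j)}$ with $C_j=C_i$ occurs as an $i$-th coordinate, giving (R4). One then checks $(\{C_i\}_i,\Sigma')$ is still a cell array satisfying (R1)--(R3): the cell conditions are untouched (so (R1)--(R3) persist), and conditions (i)--(iv) of Definition \ref{def:array} and (1)--(4) of Definition \ref{def:clustered_cell} are inherited from the original array. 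The crucial input is condition (ii) of Definition \ref{def:array}: all components of a section of $\Sigma$ share the same valuation, so all the sets $\Sigma^{(j)}$ being merged consist of centers of one and the same valuation, which keeps conditions (2)--(3) of a clustered cell and condition (ii) of a cell array valid for $\widetilde\Sigma_i$; conditions (iv) and (4) pass through unions immediately since $\rho_{i,\max}$ depends only on $C_i$. Finally, for (R5) apply Lemma \ref{lem:uniform_tree_structure} to the current array, partitioning $S$ into finitely many pieces on each of which every $C_i^{\Sigma^{(i)}}$ has uniform tree structure; by Remark \ref{rem:preservation} this preserves (R1)--(R4). Composing the four stages of partitioning yields a finite partition of $X$ into cell arrays satisfying (R1)--(R5). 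The one place requiring genuine care is (R4): merging the center sets of copies of a cell condition must not destroy the cell-array conditions, and this is precisely where the valuation-rigidity of Definition \ref{def:array}(ii) and the permutation-invariance of ``induces a partition of $X_s$'' are needed.
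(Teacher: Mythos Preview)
Your proof is correct and follows essentially the same route as the paper: (R1) via repartitionings (b) and (c) with $n=\operatorname{lcm}_i n_i$, $m=\max_i m_i$; (R2)--(R3) via a finite partition of $S$ fixing the order type of the endpoint functions followed by repartitioning (a); (R4) by merging the center sets of identical cell conditions; and (R5) via Lemma~\ref{lem:uniform_tree_structure}. Your treatment of (R4) is in fact more careful than the paper's---you build $\Sigma'$ from all partitioning tuples over the unions $\widetilde\Sigma_i$ and use the permutation argument to identify $(\Sigma')^{(i)}$, whereas the paper simply adds permuted tuples to $\Sigma$ and leaves the verification of the cell-array axioms to the reader---but the underlying idea is the same.
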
 

\begin{proof}
Condition (R1) is obtained through a repartitioning of the original array $(\{C_i\}_i,\Sigma)$. Put $n:= \text{lcm}_i\{n_i\}$ and $m:= \text{max}_i\{m_i\}$. 
By applying procedures (b) and (c) outlined in Lemma-Definition \ref{lem-def:repartitioning} to each cell $C_i$ with respect to $l_i:=\frac{n}{n_i}$ (for procedure (b)) and $l_i':=m-m_i$ (for procedure (c)), one obtains a repartinioning where all cell conditions are defined using the same set $Q_{n,m}$. We may therefore assume without loss of generality that $X=(\{C_i\}_i,\Sigma)$ already satisfies condition (R1).

Let us now first give the main ideas for a procedure to achieve conditions (R2) and (R3). We want to cut up the intervals in pieces such that there is never any overlap between them. If there were no parameter $s$ involved, one could simply do the following.  
If $C_1, C_2$ were cell conditions for which, say \[\alpha_2 < \alpha_1 < \beta_2 < \beta_1,\] we would split both conditions: replace $C_1$ by a condition $C_{1,1}$ with interval $(\alpha_1, \beta_2)$ and a condition $C_{1,2}$ with interval $(\beta_2-1, \beta_1)$. Similarly, split $C_2$ in a condition $C_{2,1}$ with interval $(\alpha_2, \alpha_1 +1)$ and a condition $C_{2,2}$ with interval $(\alpha_1, \beta_2)$. Each split will induce a new array representation of the set. Repeating this until there is no more overlap between intervals would achieve the first  condition of the lemma. 

In order to do this uniformly in $s$, one needs to make sure that the \emph{interval structure} is the same for all $s \in S$. This means that we need to first do a partitioning of $S$ to ensure that all the boundary points $\alpha_i(s), \beta_i(s)$ are ordered in the same way for all $s \in S$. Since this is a finite set, this can be done by a finite partition, so let $S_1,\ldots,S_l$ be such a partition. By Remark \ref{rem:preservation}, each cell array $\mathcal{A}_{|S_j}$ still satisfies condition (R1). Finally, we apply the above idea to cut the intervals of each cell array $\mathcal{A}_{|S_j}$  using a repartitioning as in (a) of Lemma-Definition \ref{lem-def:repartitioning}. Note that this new cell array satisfies both conditions (R2) and (R3). Moreover, the repartitioning (a) does not change the values of $n$ or $m$ used in $Q_{n,m}$ 
for any of the cell conditions, so the new cell arrays still satisfy condition (R1). Hence, without loss of generality we may suppose that $X=(\{C_i\}_i,\Sigma)$ already satisfies conditions (R1)-(R3).

For condition (R4), suppose that $C_i$ and $C_j$ are the same cell condition for $i\neq j$. At this point, there need not be any connection between the sets $\Sigma^{(i)}$ and $\Sigma^{(j)}$. However, we can replace both $\Sigma^{(i)}$ and $\Sigma^{(j)}$ by $\Sigma^{(i)} \cup \Sigma^{(j)}$, and propagate this to $\Sigma$ itself in the obvious way: if $\sigma_i \in \Sigma^{(i)}$, $\sigma_j \in \Sigma^{(j)}$, and $(s, \ldots, \sigma_i, \ldots, \sigma_j, \ldots)$ is contained in $\Sigma$, then add $(s, \ldots, \sigma_j, \ldots, \sigma_i, \ldots)$ to $\Sigma$ if necessary. 
This ensures condition (R4). In addition, since we did not change any cell condition, conditions (R1)-(R3) are still satisfied. 

Finally, by Lemma \ref{lem:uniform_tree_structure} and Remark \ref{rem:preservation} each cell array satisfying (R1)-(R4) can be partitioned into finitely many cell arrays satisfying (R1)-(R5). 
\end{proof}

\begin{proposition}\label{prop:regular_cell_array} Let $\mathcal{A}=(\{C_i\}_{i\in I},\Sigma)$ be a cell array with $\mathcal{A}(K) = X$. 
There exist regular cell arrays $\mathcal{A}_j$, such that the induced sets $\mathcal{A}_j(K)$ form a finite partition of $X$.
\end{proposition}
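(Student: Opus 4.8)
The proposal is to obtain the statement from Lemma~\ref{lem:regular1} by arranging the one remaining condition (R6). By Lemma~\ref{lem:regular1} one may partition $X$ into finitely many cell arrays satisfying (R1)--(R5), so it suffices to start from a single such cell array $\mathcal{A}=(\{C_i\}_{i\in I},\Sigma)$ and show that, after a further finite partition and finitely many repartitionings, it becomes a regular cell array. Since (R6) is a condition on the \emph{large} cell conditions only, the first step is a Presburger-definable — hence finite — partition of $S$ so that, on each piece and for each $i\in I$, either $|\alpha_i(s)-\beta_i(s)|$ is bounded by a fixed constant $M_0$ (so $C_i$ is $M_0$-bounded, and (R6) is vacuous for it) or $|\alpha_i(s)-\beta_i(s)|>M_0$ (so $C_i$ is $M_0$-large); here $M_0$ is chosen large compared with the common parameters $n,m$ from (R1). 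By Remark~\ref{rem:preservation} this preserves (R1)--(R5), and from now on one only has to worry about the large $C_i$.

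The second step exposes the branching heights. As $\mathcal{A}$ satisfies (R5), each tree $T(\Sigma^{(i)}_s)$ is isomorphic to a fixed finite tree, so by Proposition~\ref{prop:finitebranching} the number $d_i$ of branching heights of $\Sigma^{(i)}_s$ is independent of $s$, and they are given by the definable functions $\gamma^{(i)}_1(s)>\dots>\gamma^{(i)}_{d_i}(s)$ built in Section~\ref{sec:separation}. A further finite partition of $S$ makes the position of each $\gamma^{(i)}_l(s)$ relative to $\alpha_i(s)$ constant in $s$. If a large $C_i$ has all its branching heights $\leqslant\alpha_i(s)$, then (R6) already holds for it; otherwise one is left with a uniformly indexed, nonempty family of \emph{high} branching heights $\gamma^{(i)}_1(s)>\dots>\gamma^{(i)}_{l_*}(s)>\alpha_i(s)$, which must be eliminated.

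The third step — the heart of the argument — is to repartition each large $C_i$ carrying high branching heights by finitely many applications of Lemma-Definition~\ref{lem-def:repartitioning}. One cuts the interval of $C_i$ (using part~(a)) at heights determined by the high branching heights — essentially, just above and a controlled amount below each $\gamma^{(i)}_l(s)$ — and, where a piece of bounded height would still carry a high branching, further refines it; the point of cutting from below is that lowering the top of a cell condition shrinks the radius $\rho_{i,\max}(s)+m$ of its equivalence balls, which collapses exactly the equivalence classes separated by the branching just processed, while cutting from above splits off a piece all of whose branching heights lie at or below its new lower bound. Processing the high branching heights from deepest to shallowest, and using that there are at most $N-1$ of them (Proposition~\ref{prop:finitebranching}), one arrives after finitely many steps at finitely many pieces, each of which is either uniformly bounded or large with no branching height above its lower bound, i.e.\ each satisfies (R6). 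The verification that every such piece is a genuine cell array — in particular that the disjointness clause (ii) of a multi-cell and condition (iv) of Definition~\ref{def:array} are retained — relies on Remark~\ref{rem:order} and a careful analysis of how the leaves of $C_i$, and hence the potential cell fibers of $C_i$, can meet.

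Finally, one records that these operations preserve (R1) (interval cuts leave $n$ and $m$ unchanged), can be arranged to preserve (R2) and (R3) (carry out all cuts after a partition of $S$ fixing the order of the boundary points, exactly as in the proof of Lemma~\ref{lem:regular1}), that (R4) is re-imposed by unifying the center sets of equal cell conditions, and that (R5) is recovered by a last application of Lemma~\ref{lem:uniform_tree_structure}; all of this uses only Remark~\ref{rem:preservation}. Iterating over $i\in I$ then yields the desired finite partition of $X$ into regular cell arrays. I expect the main obstacle to be exactly the third step: choosing the cut heights for a large cell condition with high branching heights and checking that the pieces remain cell arrays satisfying conditions (i)--(iv) of Definition~\ref{def:array} as well as (R1)--(R6). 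The delicate point is that shrinking a cell condition's interval changes its set of admissible centers and the equivalence relation on them at the same time, and these changes have to be reconciled with the cell-array conditions; the bookkeeping is lengthy even though each individual check is elementary.
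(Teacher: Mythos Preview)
Your proposal is correct and follows essentially the same route as the paper's proof. Two remarks: the paper cuts all at once at the heights $\Delta(s)=\{\gamma_j(s)+k:\ 1\leqslant j\leqslant N,\ -m\leqslant k\leqslant m\}$ (applying these cuts simultaneously to all cell conditions parallel to $C_i$, which is what preserves (R2)--(R3)), so that any resulting \emph{large} piece automatically has no branching height above its lower bound; and the ``main obstacle'' you anticipate in the third step is not one, since Lemma-Definition~\ref{lem-def:repartitioning}(a) already guarantees that each piece is a cell array satisfying conditions (i)--(iv) of Definition~\ref{def:array}, leaving only the straightforward checks of (R1)--(R6) that you sketch.
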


\begin{proof}
By Lemma \ref{lem:regular1}, we can assume that $\mathcal{A}$ 
 already satisfies conditions (R1)-(R5), so it remains to show how to obtain condition (R6). 

Let $i\in I$ and $N \in \N$ be such that $C_i^{\Sigma^{(i)}}$ is a large clustered cell for which each fiber $(C_i^{\Sigma^{(i)}})_s$ has exactly $N$ branching heights $\gamma_1(s)>\cdots>\gamma_N(s)$. Put $I':=\{i\in I \mid C_{i'} \text{ is parallel to } C_i\}$. In the next steps of the proof, we will always apply the same repartitionings to each of the cell conditions in $\{C_i\}_{i\in I'}$, simultaneously. By condition (R5), the partitioning process described below can be carried out in a definable way, uniformly in $s$.

Consider the set \[\Delta(s):=\{\gamma_j(s) + k : 1\leq j\leq N, -m\leq k\leq m\},\] 
where $m$ is the integer value in the set $Q_{n,m}$ used to describe all cell conditions (such an $m$ exists by (R1)). Partitioning $S$ into finitely many parts if necessary (which is allowed by Remark \ref{rem:preservation}), we may assume that the set $\{\alpha_1(s), \beta_1(s)\} \cup \Delta(s)$ is ordered in the same way for all $s \in S$ (with respect to the ordering $<$). Write $\delta_1(s) < \delta_2(s) < \ldots < \delta_{L}(s)$ for the elements of $\Delta(s) \cap (\alpha_1(s), \beta_1(s))$, and put $\delta_0(s):= \alpha_1(s)+1, \delta_{L+1}(s):= \beta_1(s)$. We now apply a repartitioning as in (a) of Lemma-Definition \ref{lem-def:repartitioning}, with respect to each function $\delta_j(s)$ and each cell $C_{i}$ for $i\in I'$. That is, we replace each cell condition $C_{i}$ by cell conditions 
\[C_{i,j}:= C_{i\,|(\delta_{j}-1, \delta_{j+1})},\]
for each $1\leq j\leq L$. Note that some of these conditions may induce empty sets (in which case we will drop the corresponding cell condition). 

The value of $m$ and $n$ does not change in these new cell conditions, so (R1) is preserved. The fact that the repartitioning is applied for all parallel cells simultaneously preserves both (R2) and (R3). The same is true for (R4). Indeed, if $C_1$ and $C_2$ are copies of the same cell condition (in the original array), then the above procedure produces cell conditions $C_{1,j}$, resp. $C_{2,j}$ such that for each $j$, $C_{1,j} = C_{2,j}$. Because condition (R4) holds for the original array, one has that $\Sigma^{(1)} = \Sigma^{(2)}$. This equality is preserved when applying repartitioning (a) of  Lemma-Definition \ref{lem-def:repartitioning} to both cell conditions. Since this is the only way to obtain multiple copies of the same cell condition, conditon (R4) must be preserved. 
 By Lemma \ref{lem:uniform_tree_structure} and Remark \ref{rem:preservation}, we can assume (R5) is also satisfied. 

Let us now explain how this partitioning will ensure (R6). Consider again the large cell condition $C_i$ from the original array, and its set of potential centers $\Sigma^{(i)}$. By the repartitioning, this cell condition was replaced by smaller cell conditions $C_{i,j}$. 
The set of potential centers for each part $C_{i| (\delta_{j}-1, \delta_{j+1})}$ (which we will denote as $\Sigma^{(i,j)}$), is defined from the set of potential centers for $C_i$, by procedure (a) outlined in Lemma-Definition \ref{lem-def:repartitioning}. In that procedure, either equivalence classes are preserved, or it may be that some equivalence classes merge, and are replaced by a ball containing both original classes: 
indeed, any two centers in $\Sigma^{(i)}_s$ whose branching height is above $\delta_{j+1}+m$ are equivalent with respect to $C_{i| (\delta_{j}-1, \delta_{j+1})}$. So the tree $T(\Sigma_s^{({i,j})})$ associated to any of the cell conditions $C_{i,j}$ can have at most the same number of branching heights as the tree of $C_i$ (and will probably have less). 

Moreover, for \emph{large} cell conditions $C_{i, j}$ (deduced from $C_i$ or a copy of $C_i$), our construction assures there are no branching heights between $\delta_{j}$ and $\delta_{j+1} +m $, which indeed leaves us with a cell condition for which no branching heights are bigger than the lower bound of the cell. 
\\\\
A similar procedure should be repeated for the remaining parallel, large cell conditions. Note that this indeed ends after a finite number of steps, since the number of branching heights possibly contradicting (R6) only decreases at each step. 
\end{proof}

The following lemma gives a property of regular cell arrays that will be used often.  

\begin{lemma}\label{lemma:disjointness} Let $(\{C_i\}_{i\in I}, \Sigma)$ be a regular cell array and $i \in I$. If $\sigma_1(s), \sigma_2(s) \in \Sigma_s^{(i)}$ are non-equivalent centers, then $C_i^{\sigma_1(s)} \cap C_i^{\sigma_2(s)} = \emptyset.$
\end{lemma}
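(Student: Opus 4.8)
The plan is to reduce the claim to a local statement at a fixed $s \in S$ and then exploit the structure of a regular cell array, in particular conditions (R5) and (R6) together with condition (iv) from the definition of cell array (Definition \ref{def:array}). Fix $s \in S$ and let $\sigma_1(s), \sigma_2(s) \in \Sigma_s^{(i)}$ be non-equivalent. Recall (from the discussion after Definition \ref{def:equivalence}) that the $(C_i,\Sigma^{(i)}_s)$-equivalence class of $\sigma_k(s)$ is exactly the ball $B(\sigma_k(s)) = B_{\rho_{i,\max}(s)+m}(\sigma_k(s))$, so non-equivalence means $\ord(\sigma_1(s) - \sigma_2(s)) < \rho_{i,\max}(s) + m$; write $\gamma := \ord(\sigma_1(s)-\sigma_2(s))$, which is a branching height of $\Sigma^{(i)}_s$. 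Since every fiber $t \in C_i^{\sigma_k(s)}$ satisfies $\ord(t - \sigma_k(s)) \geqslant \alpha_i(s)$ (with equality forbidden only when $\square_{i,1} = \emptyset$, which does not occur here as $C_i$ is a $1$-cell condition with $\square_1 = \square_2 = <$), the two cells are automatically disjoint as soon as $\gamma < \alpha_i(s)$: indeed if $t$ belonged to both, then $\gamma = \ord(\sigma_1(s)-\sigma_2(s)) \geqslant \min\{\ord(t-\sigma_1(s)), \ord(t-\sigma_2(s))\} \geqslant \alpha_i(s) + 1$ (using that both orders are $> \alpha_i(s)$), a contradiction.

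So the only case to rule out is $\gamma \geqslant \alpha_i(s)+1$, i.e. that $\sigma_1(s)$ and $\sigma_2(s)$ branch strictly above the lower bound of the cell condition. Here I would split according to whether $C_i$ is large or uniformly bounded (by (R1)–(R3) and the repartitionings, we may assume each cell condition in a regular array is one of these two types; if necessary this can be arranged by a further harmless partition of $S$, but in fact only the large case needs (R6)). If $C_i$ is large, condition (R6) says directly that every branching height of $\Sigma^{(i)}_s$ is $\leqslant \alpha_i(s)$, contradicting $\gamma \geqslant \alpha_i(s)+1$; so this case is impossible, and we are done. If $C_i$ is uniformly bounded, then $\beta_i(s) - \alpha_i(s) \leqslant M$ for the fixed bound $M$, and the height of the top leaf satisfies $\rho_{i,\max}(s) \leqslant \beta_i(s) - 1 \leqslant \alpha_i(s) + M - 1$; combined with $\gamma \leqslant \rho_{i,\max}(s) + m - 1$ (strict non-equivalence) this still allows $\gamma$ to exceed $\alpha_i(s)$, so one genuinely needs another argument.

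For the bounded case I would invoke condition (iv) of Definition \ref{def:array}. Suppose for contradiction that $t \in C_i^{\sigma_1(s)} \cap C_i^{\sigma_2(s)}$. Then $\ord(t - \sigma_1(s)) = \ord(t-\sigma_2(s)) = \gamma'$ for some common value $\gamma'$ with $\alpha_i(s) < \gamma' \leqslant \rho_{i,\max}(s)$ (the leaves at height $\gamma'$ of the two fibers must coincide, since both contain $t$ and each is a ball of radius $\gamma' + m$ around $t$), and moreover $\ord(\sigma_1(s) - \sigma_2(s)) \geqslant \gamma' + ?$... more precisely the ball $B := B_{\gamma'+m}(t)$ is contained in both $C_i^{\sigma_1(s)}$ and $C_i^{\sigma_2(s)}$ (it is the common leaf), hence $B \subseteq X_s$, and it contains $\sigma_1(s) \in B$ is false — rather, $B$ does \emph{not} contain $\sigma_1(s)$. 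Instead I would argue as follows: the smallest ball $B^\ast$ containing both $\sigma_1(s)$ and $\sigma_2(s)$ has radius $\gamma$; the common leaf $B$ at height $\gamma'$ is contained in both cells, and since $\gamma' < \gamma$ is forced (a leaf at height $\geqslant \gamma$ of $C_i^{\sigma_1(s)}$ cannot equal a leaf of $C_i^{\sigma_2(s)}$, as their centers already differ by valuation exactly $\gamma$), we get that $B^\ast$ (which has $\sigma_1(s)$ in it) is contained in $B_{\gamma'}(\sigma_1(s)) \subseteq$ the union of the two leaves... The cleanest route: $B^\ast \subseteq X_s$ because $B^\ast$ is covered by leaves of $C_i^{\sigma_1(s)}$ and $C_i^{\sigma_2(s)}$ at heights between the branch and the tops (exactly as in Claim \ref{claim:Y} of Lemma \ref{lem:admissible}), $\sigma_1(s) \in B^\ast$, and $B^\ast$ has radius $\gamma > \alpha_i(s) \geqslant$... we need $\gamma \leqslant \rho_{i,\max}(s)$, which holds since $\gamma < \rho_{i,\max}(s) + m$ and — hmm, this still leaves an additive gap of $m$.

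The honest summary: \textbf{the main obstacle} is precisely the bounded case, where one must combine condition (iv) of Definition \ref{def:array} (maximal balls around a center $\sigma_i(s)$ in $X_s$ are contained in $B_{\rho_{i,\max}(s)+1}(\sigma_i(s))$) with the observation that any nonempty intersection $C_i^{\sigma_1(s)} \cap C_i^{\sigma_2(s)}$ would produce a ball inside $X_s$ around $\sigma_1(s)$ of radius strictly larger than $\rho_{i,\max}(s)+1$ (namely the ball generated by a common leaf together with the branch, which forces a ball of radius $\leqslant \gamma < \rho_{i,\max}(s)+m$ but $> \alpha_i(s)$, and a careful leaf-count as in Claim \ref{claim:Y} shows it is in fact of radius $\gamma$ and lies in $X_s$), contradicting (iv). I expect that once the large case is dispatched by (R6), the bounded case follows by exactly this kind of argument, mirroring the reasoning in the proof of Proposition \ref{prop:finitebranching} where branching heights above $\rho_{1,\max}$ were controlled; I would write it out by first reducing to the common leaf, then building the ball $B^\ast$ of radius $\gamma = \ord(\sigma_1(s)-\sigma_2(s))$ around $\sigma_1(s)$, checking $B^\ast \subseteq X_s$ via the leaf decomposition, and concluding $\gamma \leqslant \alpha_i(s)$ from (iv) — which contradicts $\gamma \geqslant \alpha_i(s)+1$, closing the proof.
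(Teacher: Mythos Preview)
Your treatment of the large case is exactly the paper's proof: once (R6) forces every branching height of $\Sigma^{(i)}_s$ to be $\leqslant \alpha_i(s)$, you get $\ord(\sigma_1(s)-\sigma_2(s)) \leqslant \alpha_i(s)$, and then for $t \in C_i^{\sigma_1(s)}$ the ultrametric inequality gives $\ord(t-\sigma_2(s)) = \ord\bigl((t-\sigma_1(s))+(\sigma_1(s)-\sigma_2(s))\bigr) \leqslant \alpha_i(s)$, so $t \notin C_i^{\sigma_2(s)}$. The paper does this in two lines.

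Where you diverge is the non-large case, and here you are working much harder than the paper does. The paper's entire handling of it is the phrase ``as otherwise there is nothing to prove.'' Condition~(iv) of Definition~\ref{def:array} is not invoked at all. The point the paper is relying on is that if $C_i$ is not large then (for the relevant $s$) the fiber $C_i^{\sigma(s)}$ consists of a single leaf, i.e.\ a single ball of radius $\rho_{i,\max}(s)+m$. In that situation disjointness is immediate: if $t$ lay in both $C_i^{\sigma_1(s)}$ and $C_i^{\sigma_2(s)}$, then $t-\sigma_1(s)$ and $t-\sigma_2(s)$ would share the same order $\rho_{i,\max}(s)$ and the same $\acm$-value, whence $\ord(\sigma_1(s)-\sigma_2(s)) \geqslant \rho_{i,\max}(s)+m$, which is exactly the statement that $\sigma_1(s)$ and $\sigma_2(s)$ are $(C_i,\Sigma^{(i)}_s)$-equivalent. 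So there is no separate ``bounded'' case requiring the machinery you attempted; your route through condition~(iv), common leaves, and the ball $B^\ast$ --- while not incorrect in spirit --- is an unnecessary detour that you did not need to complete.
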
 
\begin{proof}
Assume that $C_i$ is a large cell condition, as otherwise there is nothing to prove. If $\sigma_1(s), \sigma_2(s) \in \Sigma^{(i)}_s$ are non-equivalent centers, then condition (R6) implies that $\ord (\sigma_1(s) - \sigma_2(s)) \leqslant \alpha_i(s)$. Hence, for $(s,t) \in C_i^{\sigma_1}$ we have that \[\ord(t-\sigma_2(s)) = \ord((t-\sigma_1(s))+(\sigma_1(s) - \sigma_2(s)))\leqslant \alpha_i(s),\] which means that $(s,t) \not \in C_i^{\sigma_2}$.
\end{proof}

\section{Separating cell arrays} \label{sec:largesmall}
In this section, we will need to keep track of the multiplicity with which a given cell condition occurs in a cell array. Since in a regular array, the associated set of potential centers is the same for each copy of a given cell condition, we will regroup this information, and, in the proofs that follow, whenever convenient adopt the following notation for regular cell arrays. The notation
\[ (\{C_i^{\langle k_i \rangle} \}_{1 \leqslant i \leqslant l}, \langle \Sigma \rangle), \]
with $\langle\Sigma \rangle \in S \times K^l$ will denote an array where the cell condition $C_i$ occurs with multiplicity $k_i$. The associated set of potential centers for $C_i$ will be denoted as $\langle\Sigma \rangle^{(i)}$, and corresponds to the projection of the fibers of $\langle\Sigma \rangle$ onto the $i$-th coordinate. Given a set $\langle\Sigma \rangle$, it should be clear to the reader how this set can be \emph{expanded} to the set $\Sigma \subseteq S \times K^{k_1 + \ldots + k_l}$ used in the standard notation. We will only use this condensed notation for regular arrays.
\\\\
Our goal in this section is to show that, possibly after further partitioning or applying certain transformations, one can definably split a cell array into clustered cells $C_i^{\langle \Sigma \rangle^{(i)}}$.
Since these clustered cells are derived from regular cell arrays, they will inherit certain properties of regularity. 
The following terminology will be useful.  
\begin{definition}\label{def:multiball}
Let $k>0$ be an integer.  A set $H \subseteq S \times K$ is called a \emph{multi-ball of order $k$} over $S$, if every fiber $H_s$ (for $s \in S$) is a union of $k$ disjoint balls of the same radius.
\end{definition}
 
\begin{definition}\label{def:regularcluster}
A clustered cell $C^{\Sigma}$ is called \emph{regular of order $k$} if it is regular (when considered as a cell array) and 
$\Sigma$ is a multi-ball of order $k$, where the $k$ balls
coincide with the $k$ different $(C, \Sigma_s)$-equivalence classes. 
\end{definition}
\noindent In particular, the regularity condition (R6) implies that if two sections $\sigma, \sigma'$ of $\Sigma$ are not $(C, \Sigma_s)$-equivalent, then $C^{\sigma(s)} \cap C^{\sigma'(s)} = \emptyset,$ and hence for every $s\in S$, we have that, if $\sigma_1, \ldots, \sigma_k$ are sections of $\Sigma$ for which $\{\sigma_1(s), \ldots \sigma_k(s)\}$ are representatives of the $k$ equivalence classes in $\Sigma_s$, then
\[ C^{\sigma_1(s)} \cup C^{\sigma_2(s)} \cup \ldots \cup C^{\sigma_k(s)}\]
is a partition of $(C^{\Sigma})_s$. 

\begin{remark}\label{rem:regularsplit}
The splitting procedures outlined in Definitions \ref{def:split} and \ref{def:defsplit} can also be used for regular cell arrays, and the regularity condition is preserved under splits by projection. We leave it to the reader to check that, in particular, condition (R5) about uniformity in the tree structure is preserved. When applying a split by definable choice, condition (R5) might get lost initially, but this can always be restored by a further finite partitioning (as described in Lemma \ref{lem:uniform_tree_structure}) if necessary.
\end{remark}

Let us start by considering the cases where a clustered cell can be split off without modifying the array first. Here we use the terminology and notations of Definition \ref{def:split}. 

\begin{lemma} \label{lemma:largecellsplit}
 Let $\mathcal{A} = (\{C_i^{\langle k_i \rangle} \}_{1 \leqslant i \leqslant l}, \langle \Sigma \rangle)$ be a regular cell array, with $\mathcal{A}(K) = X$ and $l>1$, 
 for which $C_1^{\langle \Sigma \rangle^{(1)}}$ is a regular clustered cell of order $k_1$. 
Then $\mathcal{A}$ can be partitioned as the union of $C_1^{\langle \Sigma \rangle^{(1)}}$ and the regular cell array $\left(\{C_i^{\langle k_i \rangle} \}_{2 \leqslant i \leqslant l}, \langle \Sigma \rangle^{(2, \ldots, l)}\right)$.
\end{lemma}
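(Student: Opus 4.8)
The plan is to expand the condensed notation and show that $\mathcal{A}$ \emph{splits at $k_1$ by projection} in the sense of Definition \ref{def:split}, with the first block being the $k_1$ copies of $C_1$. Write $r=k_1+\cdots+k_l$ and let $\Sigma\subseteq S\times K^r$ be the expansion of $\langle\Sigma\rangle$, arranged so that the first $k_1$ coordinates are copies of $C_1$ (each carrying the potential-center set $\langle\Sigma\rangle^{(1)}$) and the last $r-k_1$ coordinates are the copies of $C_2,\ldots,C_l$. Then $\Sigma^{(1,\ldots,k_1)}$ projects onto $\langle\Sigma\rangle^{(1)}$ in each of its coordinates, while $\Sigma^{(k_1+1,\ldots,r)}$ is exactly the expansion of $\langle\Sigma\rangle^{(2,\ldots,l)}$. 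Once the split is established, Definition \ref{def:split} yields a partition $\mathcal{A}(K)=\mathcal{A}^{(1,\ldots,k_1)}(K)\cup\mathcal{A}^{(k_1+1,\ldots,r)}(K)$, so it remains to identify $\mathcal{A}^{(1,\ldots,k_1)}(K)$ with $C_1^{\langle\Sigma\rangle^{(1)}}$, to establish the disjointness required by \eqref{eq:split}, and to observe that the second piece is a regular cell array.

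For the identification of the first piece, recall from the remark after Definition \ref{def:split} that $\mathcal{A}^{(1,\ldots,k_1)}(K)$ is the union, over all sections $\sigma$ of $\Sigma^{(1,\ldots,k_1)}$, of $C_1^{\sigma_1}\cup\cdots\cup C_1^{\sigma_{k_1}}$. Since every coordinate of such a section lies in $\langle\Sigma\rangle^{(1)}_s$, the inclusion ``$\subseteq$'' is immediate from Definition \ref{def:sigmacell}. Conversely, given any $c\in\langle\Sigma\rangle^{(1)}_s=\Sigma^{(1)}_s$, the definition of the projection provides a tuple of $\Sigma_s$ with first coordinate $c$; restricting an extending section to the first $k_1$ coordinates shows $C_1^{c}\subseteq\mathcal{A}^{(1,\ldots,k_1)}(K)$, and as $c$ was arbitrary, ``$\supseteq$'' follows.

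The step I expect to require the most care is the disjointness of $X^{(1,\ldots,k_1)}$ and $X^{(k_1+1,\ldots,r)}$. The key point is that for every section $\sigma$ of $\Sigma$ and every $s\in S$, the tuple $\sigma_1(s),\ldots,\sigma_{k_1}(s)$ consists of representatives of the $k_1$ distinct $(C_1,\langle\Sigma\rangle^{(1)}_s)$-equivalence classes, one in each. Indeed, these elements lie in $\langle\Sigma\rangle^{(1)}_s$, which by Definition \ref{def:regularcluster} is the union of exactly $k_1$ equivalence classes (since $C_1^{\langle\Sigma\rangle^{(1)}}$ has order $k_1$); by condition (ii) of Definition \ref{def:multi-cell} the fibers $C_1^{\sigma_1(s)},\ldots,C_1^{\sigma_{k_1}(s)}$ are pairwise disjoint, so no two of the $\sigma_j(s)$ are equivalent --- if they were, the corresponding fibers would be equal, hence empty, which (as $C_1$ is a $1$-cell condition, so the emptiness of $C_1^{c}$ does not depend on the center $c$) would force $\langle\Sigma\rangle^{(1)}_s$ to have a single equivalence class, contradicting $k_1\geqslant2$ (the case $k_1=1$ being trivial). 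By pigeonhole the $\sigma_j(s)$ then meet all $k_1$ classes, so by condition (4) of Definition \ref{def:clustered_cell},
\[
C_1^{\sigma_1(s)}\cup\cdots\cup C_1^{\sigma_{k_1}(s)}=C_1^{\langle\Sigma\rangle^{(1)}_s}.
\]
Now if $(s,t)\in X^{(1,\ldots,k_1)}_s\cap X^{(k_1+1,\ldots,r)}_s$, choose a section $\sigma'$ of $\Sigma^{(k_1+1,\ldots,r)}$ and a copy $j>k_1$ with $t\in C_j^{\sigma'_j(s)}$, extend $\sigma'(s)$ to a tuple of $\Sigma_s$ and let $\sigma$ be a section of $\Sigma$ taking that value at $s$. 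By condition (i) of Definition \ref{def:multi-cell} together with (ii), $X_s$ is the disjoint union of the fibers attached to all copies, so $t$ lies in exactly one of them, namely $C_j^{\sigma'_j(s)}$; hence $t\notin C_1^{\sigma_1(s)}\cup\cdots\cup C_1^{\sigma_{k_1}(s)}=C_1^{\langle\Sigma\rangle^{(1)}_s}=X^{(1,\ldots,k_1)}_s$, a contradiction. This proves the split.

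Finally, in condensed notation $\mathcal{A}^{(k_1+1,\ldots,r)}$ is precisely $(\{C_i^{\langle k_i\rangle}\}_{2\leqslant i\leqslant l},\langle\Sigma\rangle^{(2,\ldots,l)})$; it is a cell array, since conditions (i)--(iv) of Definition \ref{def:array} are inherited from $\mathcal{A}$ (for $i\geqslant2$ the $i$-th projection of $\langle\Sigma\rangle^{(2,\ldots,l)}$ is again $\langle\Sigma\rangle^{(i)}$, and a ball contained in $X_s\setminus C_1^{\langle\Sigma\rangle^{(1)}_s}$ is a fortiori a ball in $X_s$), and it is regular by Remark \ref{rem:regularsplit}, which states that splits by projection preserve regularity. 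Combining this with the split $\mathcal{A}(K)=C_1^{\langle\Sigma\rangle^{(1)}}\cup\mathcal{A}^{(k_1+1,\ldots,r)}(K)$ yields the desired partition.
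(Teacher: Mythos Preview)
Your proof is correct and follows essentially the same approach as the paper: you split at $k_1$ by projection, argue that in any section the first $k_1$ centers are pairwise non-equivalent and hence (by the order-$k_1$ hypothesis) exhaust all equivalence classes, so $\bigcup_{i\le k_1}C_1^{\sigma_i(s)}=C_1^{\langle\Sigma\rangle^{(1)}_s}$ is independent of the section, derive disjointness from the partition property of multi-cells, and invoke Remark~\ref{rem:regularsplit} for regularity. Your version is more detailed than the paper's (you spell out the identification $\mathcal{A}^{(1,\ldots,k_1)}(K)=C_1^{\langle\Sigma\rangle^{(1)}}$ and the emptiness edge case), but the argument is the same.
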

\begin{proof}
 The suggested split is a split at $k_1$ (by projection). The regularity claim follows from Remark \ref{rem:regularsplit}. Note that $C_1^{\langle \Sigma \rangle^{(1)}} = X^{(1, \ldots, k_1)}$. What needs to be checked is whether
\[C_1^{\langle \Sigma \rangle^{(1)}} \cap 
 X^{(k_1 +1, \ldots, \sum k_i)}= \emptyset.\]
The reason this intersection is empty is as follows. For any section $\sigma = (\sigma_{1,1}, \ldots, \sigma_{1,k_1}, \\ \sigma_{2, 1}, \ldots, \sigma_{l,k_l})$ of $\Sigma$, we get a partition
\begin{equation}\label{Xspart} X_s = \bigcup_{i=1}^{k_1} C_1^{\sigma_{1,i}(s)} \cup \left[\bigcup_{i=1}^{k_2} C_2^{\sigma_{2,i}(s)} \cup \ldots \cup \bigcup_{i=1}^{k_l} C_l^{\sigma_{l,i}(s)}\right],\end{equation}
where the elements $\sigma_{1,i}(s)$ are $k_1$ distinct (i.e., non-equivalent) elements of $\langle \Sigma \rangle_s^{(1)}$. However, by our assumption, this set only consists of $k_1$ equivalence classes. Hence, for any possible choice of $\sigma$, $\bigcup_{i=1}^{k_1} C_1^{\sigma_{1,i}(s)}$ is the same set, so a nonempty intersection would  imply the existence of a $\sigma$ that contradicts the fact that \eqref{Xspart} gives a partition of $X_s$.
\end{proof}

Given a regular cell array $(\{C_i^{\langle k_i \rangle} \}_{1 \leqslant i \leqslant l}, \langle \Sigma \rangle)$, let us now consider a cell condition $C_1$ for which $\langle\Sigma\rangle^{(1)}$ is a multi-ball of order strictly bigger than $k_1$. In this case, the reasoning in the previous proof implies that there exists some center $\widehat{\sigma}$ in $\langle \Sigma \rangle^{(1)}$, and a section $\sigma' = (\sigma'_{1,1}, \ldots, \sigma'_{1,k_1}, \sigma'_{2, 1}, \ldots, \sigma'_{l,k_l})$ of $\Sigma$ such that for every $s$, 
\[C_1^{\widehat{\sigma}(s)} \cap \left[\bigcup_{i=1}^{k_2} C_2^{\sigma'_{2, i}(s)} \cup \ldots \cup \bigcup_{i=1}^{k_l} C_l^{\sigma'_{l,i}(s)}\right] \neq \emptyset\]
(and hence obviously $\widehat{\sigma}(s)$ is not equivalent to any element of $\{\sigma'_{1,1}(s), \ldots, \sigma'_{1,k_1}(s)\}$). We will refer to this situation by saying that $\widehat{\sigma}(s)$ admits \emph{external exchange}.
%
%
The following lemma shows that the property of external exchange has consequences for the size of a large cell. 

\begin{lemma}\label{lemma:externalbounded}
Let $\mathcal{A} = (\{C_i^{\langle k_i \rangle}\}_i, \langle\Sigma \rangle)$ be a regular cell array with $\mathcal{A}(K) =X$, and $C_j$ a large cell condition for which $\langle\Sigma \rangle^{(j)}$ is a multi-ball with order $k > k_j$.
 Then there exists $M \in \N$ such that $C_j$ is $M$-bounded. 
\end{lemma}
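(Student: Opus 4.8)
The plan is to argue by contradiction: suppose that for every $M \in \N$ there is some $s \in S$ with $|\alpha_j(s) - \beta_j(s)| > M$, and derive an \emph{absolute} bound on $|\alpha_j(s)-\beta_j(s)|$ depending only on $n$, $m$ and $L := \sum_{i \neq j} k_i$, which is the desired contradiction. First I would set up the external exchange. Since $\langle\Sigma\rangle^{(j)}$ is a multi-ball of order $k > k_j$, for every $s$ there is a center $\widehat{\sigma}(s) \in \langle\Sigma\rangle^{(j)}_s$ together with a section $\sigma' = (\sigma'_{i,t})$ of $\Sigma$ such that $\widehat{\sigma}(s)$ is not $(C_j,\langle\Sigma\rangle^{(j)}_s)$-equivalent to any $\sigma'_{j,t}(s)$; by Lemma \ref{lemma:disjointness} the fiber $C_j^{\widehat{\sigma}(s)}$ is then disjoint from all the $C_j$-fibers in the partition of $X_s$ induced by $\sigma'$, hence
\[
C_j^{\widehat{\sigma}(s)} \ \subseteq\ \bigcup_{i \neq j}\ \bigcup_{t=1}^{k_i} C_i^{\sigma'_{i,t}(s)},
\]
a union of $L$ cell fibers. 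As the array is regular, $C_j$ is a $1$-cell condition with $\square_1 = \square_2 = <$, and by (R1) all cell conditions use the same $Q_{n,m}$; thus $C_j^{\widehat{\sigma}(s)}$ is the disjoint union of its leaves $\Lambda_\gamma := C_j^{\widehat{\sigma}(s),\gamma}$, one for each height $\gamma \in (\alpha_j(s),\beta_j(s))$ with $\gamma \equiv \ord(\lambda_j) \bmod n$, each $\Lambda_\gamma$ being a ball of radius $\gamma+m$; there are at least $(|\alpha_j(s)-\beta_j(s)|-1)/n - 1$ of them.

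The heart of the argument is the estimate that each of the $L$ fibers $C_i^{\sigma'_{i,t}(s)}$ meets at most $P := \lceil 2m/n\rceil + 1$ leaves $\Lambda_\gamma$. To prove it, fix such a fiber, write $c := \sigma'_{i,t}(s)$ and $\delta := \ord(\widehat{\sigma}(s)-c)$. For a leaf height $\gamma \neq \delta$, since the elements of $\Lambda_\gamma$ pairwise differ by valuation $\geq \gamma+m$, both $\ord(t'-c)$ and $\acm(t'-c)$ are constant as $t'$ runs over $\Lambda_\gamma$; hence $\Lambda_\gamma$ is entirely contained in $C_i^{c}$ or disjoint from it. If $\gamma \leq \delta - m$, then $\acm(t'-c) = \acm(t'-\widehat{\sigma}(s)) = \acm(\lambda_j)$, so containment forces $\lambda_i Q_{n,m} = \lambda_j Q_{n,m}$ with the intervals of $C_i$ and $C_j$ sharing the height $\gamma$; by (R2) the intervals coincide, making $C_i$ and $C_j$ the same cell condition, impossible in condensed form. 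If $\gamma \geq \delta + m$, the containment condition for $\Lambda_\gamma$ (namely $\delta \in (\alpha_i(s),\beta_i(s))$ and $\widehat{\sigma}(s)-c \in \lambda_i Q_{n,m}$) does not depend on $\gamma$, so it holds for all such $\gamma$ at once or for none; in the former case every such $\Lambda_\gamma$ lies in one and the same leaf of $C_i^{c}$, which one computes equals the ball $B_{\delta+m}(\widehat{\sigma}(s))$, so $B_{\delta+m}(\widehat{\sigma}(s)) \subseteq C_i^{c} \subseteq X_s$ is a ball of $X_s$ containing the potential center $\widehat{\sigma}(s)$ of the clustered cell $C_j^{\langle\Sigma\rangle^{(j)}}$, contradicting condition (iv) of Definition \ref{def:array} (which forces $\delta + m \geq \rho_{j,\max}(s) + 1$, whereas $\rho_{j,\max}(s) \geq \delta + m$ because a leaf of that height exists). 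Therefore $C_i^{c}$ meets only leaves with $\gamma$ in the window $(\delta - m, \delta + m)$, of which at most $P$ are leaf heights.

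Finally I would combine the two halves: every leaf $\Lambda_\gamma$ is contained in the displayed union, hence meets at least one of the $L$ fibers, so the number of leaves of $C_j^{\widehat{\sigma}(s)}$ is at most $LP$; together with the lower bound from the first paragraph this gives $|\alpha_j(s)-\beta_j(s)| \leq n(LP+1)+1 =: M$, a bound depending only on $n$, $m$ and $L$. Since this holds for every $s$, the cell condition $C_j$ is $M$-bounded, contradicting the assumption made for contradiction; this proves the lemma. I expect the delicate step to be the second paragraph, in particular ruling out the "all-or-nothing containment above $\delta + m$" case: one must verify carefully that the relevant leaves of $C_i^{c}$ and of $C_j^{\widehat{\sigma}(s)}$ really do assemble into the single ball $B_{\delta+m}(\widehat{\sigma}(s))$ so that condition (iv) of Definition \ref{def:array} applies, and keep the various angular-component computations modulo $\pi_K^{m}$ straight.
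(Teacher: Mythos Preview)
Your proposal is correct and follows essentially the same strategy as the paper: set up the external exchange to cover $C_j^{\widehat{\sigma}(s)}$ by the other cell fibers, then show each such fiber can meet only leaves with height in a window of width $O(m)$ around $\delta=\ord(\widehat{\sigma}(s)-c)$, using condition~(iv) of Definition~\ref{def:array} for the high side and the $\acm$ computation for the low side. The only cosmetic difference is that the paper treats non-parallel and parallel cells in two separate claims, whereas you handle both cases at once via (R2); this yields a slightly looser but perfectly adequate bound.
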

\begin{proof}
Fix a large cell condition from the cell array, which will be denoted as $C_{\lambda}$:  
\[C_\lambda(s,c,t) = \alpha(s) < \ord (t-c) < \beta(s) \wedge t- c \in \lambda Q_{n,m}.\]
We write $k_\lambda$ for its multiplicity and $\langle\Sigma\rangle^{(\lambda)}$ for its set of potential centers. By assumption, $\langle\Sigma\rangle^{(\lambda)}$ is a multiball of order $k > k_\lambda$.  Let $\widehat{\sigma}$ be as in the discussion preceding this lemma.
Hence, there exists a section $\sigma = (\sigma_{1}, \ldots, \sigma_{k_{\lambda}}, \zeta_1, \ldots, \zeta_{r_1 + r_2})$, such that, for all $s \in S$, $\widehat{\sigma}(s)$ is not $(C_\lambda, \langle\Sigma\rangle^{(\lambda)}_s)$-equivalent to any of the elements of $\{\sigma_{1}(s), \ldots, \sigma_{k_{\lambda}}(s)\}$. We write the corresponding decomposition of $X_s$ as
\[ X_s =
\left[C_\lambda^{\sigma_{1}(s)} \cup \ldots \cup C_\lambda^{\sigma_{k_{\lambda}}(s)} \right] \cup \left[\bigcup_{i=1}^{r_1} C_i^{\zeta_{i}(s)}\cup \bigcup_{i=1}^{r_2} D_i^{\zeta_{r_1 +i}(s)}\right], \]
where the cells $C_i$ are parallel to $C_\lambda$ and the cells $D_i$ are non-parallel to $C_\lambda$. (We allow that $C_i = C_j$ for $i \neq j$ and similarly for $D_i$.) 
Note that by Lemma \ref{lemma:disjointness}, the intersections $C_\lambda^{\widehat{\sigma}(s)} \cap C_\lambda^{\sigma_{i}(s)}$ are all empty, and hence \[C_\lambda^{\widehat{\sigma}(s)} \subset \left[\bigcup_{i=1}^{r_1} C_i^{\zeta_{i}(s)}\cup \bigcup_{i=1}^{r_2} D_i^{\zeta_{r_1 +i}(s)}\right].\] 
%
We will show that there exists a fixed bound $N \in \N$ such that, for any $s \in S$, each of the intersections $C_{\lambda}^{\widehat{\sigma}(s)} \cap C_i^{\zeta_{i}(s)}$, resp. $C_{\lambda}^{\widehat{\sigma}(s)} \cap D_i^{\zeta_{r_1+i}(s)}$ can contain points of at most $N$ leaves of $C_{\lambda}^{\widehat{\sigma}(s)}$. The statement of the lemma follows from this, since clearly this implies that the larger the interval $(\alpha(s), \beta(s))$ in the description of $C_{\lambda}$ gets, the  more cells will be involved in this exchange process, yet the decomposition is finite.
\\\\
Let us first consider the non-parallel cells $D_i$.

\begin{claim} For every $s\in S$, and any $1 \leqslant i \leqslant r_2$, at most one leaf of $C_{\lambda}^{\widehat{\sigma}(s)}$ can intersect the cell fiber $D_i^{\zeta_{r_1+i}(s)}$.
\end{claim}
Write $(\alpha(s), \beta(s))$ for the interval associated to $C_{\lambda}$, and $(\alpha_i(s), \beta_i(s))$ for the interval associated to $D_i$. By assumption, these intervals have empty intersection. First consider the case where $D_i$ lies \emph{above} $C_\lambda$ (i.e., $\beta(s)  \leqslant \alpha_{i}(s) +1$). Suppose that $D_i^{\zeta_{r_1+i}(s)}$ contains a point $t$ from a leaf $C_{\lambda}^{\widehat{\sigma}(s), \gamma}$. Then $\ord(t-\zeta_{r_1+i}(s)) > \alpha_i(s)$, and hence $\ord(\zeta_{r_1+i}(s) - \widehat{\sigma}(s)) = \gamma$. But this implies that the cell fiber $D_i^{\zeta_{r_1+i}(s)}$ cannot possibly contain points from other leaves of $C_{\lambda}^{\widehat{\sigma}(s)}$. Hence, at most 1 leaf of $C_{\lambda}^{\widehat{\sigma}(s)}$ can intersect with $D_i^{\zeta_{r_1+i}(s)}$.

On the other hand, when $D_i$ lies \emph{below} $C_{\lambda}$ (i.e., $\beta_i(s) \leqslant \alpha(s) +1$), a cell fiber $D_i^{\zeta_{r_1+i}(s)}$ can contain at most a single leaf of $C_{\lambda}^{\widehat{\sigma}(s)}$ (or no leaf at all). Indeed, if $D_i^{\zeta_{r_1+i}(s)}$ would contain points from more than one leaf of $C_{\lambda}^{\widehat{\sigma}(s)}$, then $D_i^{\zeta_{r_1+i}(s)}$ would contain a ball $B_r(\widehat{\sigma}(s))$ which contains those leaves. It is easy to check that this ball $B_r(\widehat{\sigma}(s))$ would have radius $r < \rho_\text{max}(s)$, which contradicts condition (iv) of the definition of cell arrays (Definition \ref{def:array}).
\begin{claim} 
For every $s \in S$, and any $1 \leqslant i \leqslant r_1$, at most $2m$ leaves of $C_{\lambda}^{\widehat{\sigma}(s)}$ can intersect the cell fiber  $C_i^{\zeta_i(s)}$.
\end{claim}
Consider a cell fiber $C_i^{\zeta_i(s)}$ for which $C_i^{\zeta_i(s)} \cap C_{\lambda}^{\widehat{\sigma}(s)} \neq \emptyset.$ 
Put $\gamma_0(s):= \ord (\widehat{\sigma}(s) - \zeta_i(s))$. It is sufficient to show that $C_i^{\zeta_i(s)} \cap C_{\lambda}^{\widehat{\sigma}(s)} \subseteq C_{\lambda | (\gamma_0(s) -m, \gamma_0(s) +m) }^{\widehat{\sigma}(s)}$, as this set cannot contain more than $2m$ leaves. 

Suppose that the intersection contains some $t \in K$ for which $\ord (t- \widehat{\sigma}(s)) \geqslant \gamma_0(s) +m$. Note that this implies that $\gamma_0(s) +m \leqslant \rho_{\text{max}}(s) $. One can check that for such a $t$ to exist, $C_i^{\zeta_i(s)}$ needs to contain the whole ball $B_{\gamma_0(s)+m}(\widehat{\sigma}(s))$, which would again contradict condition (iv) of Definition \ref{def:array} 
, since it would mean that $X_s$ contains a ball $B_r(\widehat{\sigma}(s))$ with radius $r < \rho_{\text{max}}(s) + 1$.  

Finally, suppose the intersection contains some $t \in K$ for which $\ord (t- \widehat{\sigma}(s)) \leqslant \gamma_0(s) -m$. In this case, we would have that $\ord(t- \widehat{\sigma}(s)) = \ord(t-\zeta_i(s)) \leqslant \gamma_0(s) -m$, and hence
the fact that $(t- \widehat{\sigma}(s)) \in \lambda Q_{n,m}$ would imply that also $(t- \zeta_i(s)) \in \lambda Q_{n,m}$. However, this contradicts the assumption that $t \in C_i^{\zeta_i(s)}$, 
since $C_i$ is a  parallel cell condition different from $C_{\lambda}$ (and hence $\acm(\lambda_i) \neq \acm(\lambda)$.)
\end{proof}
A consequence of this lemma is the following.

\begin{proposition}\label{prop:large-small}
Let $\mathcal{A} = (\{C_i^{\langle k_i\rangle}\}_{1 \leqslant i \leqslant l}, \langle\Sigma \rangle)$ be a regular cell array defining a set $X$. There exists a finite partition of $\mathcal{A}$ into arrays $(\mathcal{A}_j)_{j\in J}$, such that for each $j\in J$, $\mathcal{A}_j$ is either a regular clustered cell, or a regular cell array only containing small cell conditions. 
\end{proposition}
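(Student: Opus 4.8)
The plan is to argue by induction on $\nu(\mathcal{A})$, the number of cell conditions of $\mathcal{A}$ that are not small. Before starting I would do some routine cleanup: discard every cell condition whose fibers are empty, and partition $S$ (which preserves regularity by Remark \ref{rem:preservation}) so that emptiness of each cell condition, and the order type of the finite set of boundary values $\{\alpha_i(s),\beta_i(s)\}_i$, do not depend on $s$. If $\nu(\mathcal{A})=0$ then $\mathcal{A}$ is a regular cell array containing only small cell conditions and there is nothing to prove. So assume $\nu(\mathcal{A})>0$. Inspecting the proof of Lemma \ref{lemma:externalbounded}, the bound it produces depends only on the discrete data of the regular cell array (the number of cell conditions counted with multiplicity, and the parameters $n,m$ of the common set $Q_{n,m}$, which exists by (R1)); call it $M_0(\mathcal{A})$. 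Fix an integer $M>\max\{M_0(\mathcal{A}),1\}$. Since $s\mapsto\beta_i(s)-\alpha_i(s)$ is a definable map into $\Gamma_K$, partitioning $S$ into the definable subsets where this quantity is $\leqslant M$ and where it is $>M$, for each $i$, and using Remark \ref{rem:preservation} again, we may assume that every cell condition of $\mathcal{A}$ is either $M$-bounded or $M$-large. It now suffices to treat one such piece.

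Suppose first that some cell condition, say (after permuting) $C_1$, is $M$-large. Writing $\mathcal{A}=(\{C_i^{\langle k_i\rangle}\}_{1\leqslant i\leqslant l},\langle\Sigma\rangle)$, the set $\langle\Sigma\rangle^{(1)}$ has, for each $s$, fibers that are unions of the $(C_1,\langle\Sigma\rangle^{(1)}_s)$-equivalence classes (condition (4) of Definition \ref{def:clustered_cell}); these classes are balls of radius $\rho_{1,\max}(s)+m$, finite in number by Proposition \ref{prop:finitebranching}, and of constant number in $s$ by (R5), so $\langle\Sigma\rangle^{(1)}$ is a multi-ball of some order $k\geqslant k_1$. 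If $k>k_1$, Lemma \ref{lemma:externalbounded} would force $C_1$ to be $M_0(\mathcal{A})$-bounded, contradicting $M>M_0(\mathcal{A})$; hence $k=k_1$, so $C_1^{\langle\Sigma\rangle^{(1)}}$ is a regular clustered cell of order $k_1$ (Definition \ref{def:regularcluster}): built from copies of $C_1$ inside the regular array $\mathcal{A}$, it inherits (R1)--(R6), and $\langle\Sigma\rangle^{(1)}$ is a multi-ball of order $k_1$ whose balls are exactly the equivalence classes. If $l=1$ then $\mathcal{A}=C_1^{\langle\Sigma\rangle^{(1)}}$ is itself such a clustered cell and we are done. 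If $l>1$, Lemma \ref{lemma:largecellsplit} partitions $\mathcal{A}$ as the union of the regular clustered cell $C_1^{\langle\Sigma\rangle^{(1)}}$ and the regular cell array $(\{C_i^{\langle k_i\rangle}\}_{2\leqslant i\leqslant l},\langle\Sigma\rangle^{(2,\ldots,l)})$, which has strictly smaller $\nu$, so the induction hypothesis applies to it.

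Suppose now that every cell condition of $\mathcal{A}$ is $M$-bounded. For a non-small one, say $C_i$, every fiber consists of at most $M$ leaves, located at heights $\gamma_i(s),\gamma_i(s)+n_i,\dots$ for a suitable definable function $\gamma_i\colon S\to\Gamma_K$. Applying repartitioning (a) of Lemma-Definition \ref{lem-def:repartitioning} successively along these heights, and doing so simultaneously for all cell conditions parallel to $C_i$, replaces $C_i$ by finitely many cell conditions each of whose fibers is a single leaf, i.e.\ each of which is small. Since $Q_{n,m}$ is unchanged this preserves (R1); the new conditions are not large, so (R6) is preserved; applying the splits simultaneously to parallel copies preserves (R2), (R3) and (R4); and (R5) is restored by a further finite partition of $S$ as in Lemma \ref{lem:uniform_tree_structure}, which moreover keeps small cell conditions small and does not affect the others. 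Carrying this out for every non-small cell condition produces finitely many regular cell arrays of strictly smaller $\nu$, to each of which the induction hypothesis applies. This completes the induction.

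The main obstacle is this last preprocessing: bounded-but-not-small cell conditions must be broken into genuinely small ones while simultaneously preserving all of (R1)--(R6) of Definition \ref{def:regular_array}, most delicately the uniformity (R5) of the tree structure of every clustered cell in the array, and while checking that all of the repartitioning machinery of Lemma-Definition \ref{lem-def:repartitioning} keeps the global structure of a cell array intact. The apparent circularity — the constant $M_0(\mathcal{A})$ from Lemma \ref{lemma:externalbounded} grows when cell conditions are split — is sidestepped by choosing the threshold $M$ afresh inside each recursive call and by inducting on $\nu(\mathcal{A})$ rather than on the total number of cell conditions, since splitting a bounded cell condition into small ones strictly decreases $\nu$.
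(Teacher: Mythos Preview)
Your proof is correct and follows essentially the same approach as the paper: both argue by reducing the number of non-small cell conditions, splitting off a large cell via Lemma~\ref{lemma:largecellsplit} when its multi-ball order equals its multiplicity, and otherwise invoking Lemma~\ref{lemma:externalbounded} to see it is uniformly bounded and then slicing it (and its parallel copies) into small pieces via repartitioning~(a). Your version is slightly more careful in one respect --- you fix the threshold $M>M_0(\mathcal{A})$ in advance and partition $S$ so that every cell condition is either $M$-large or $M$-bounded, which makes the dichotomy explicit and forces any $M$-large cell to satisfy $k=k_1$ --- but this is only an organizational refinement of the paper's argument.
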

\begin{proof}
Let $C_i$ be a large cell condition and assume that $\langle\Sigma \rangle^{(i)}$ is a multi-ball of order $l_i$. If $k_i=l_i$, then by lemma \ref{lemma:largecellsplit}, the clustered cell $C_i^{\langle\Sigma \rangle^{(i)}}$ can be split off. Moreover, since $\mathcal{A}$ is regular, so is $C_i^{\langle\Sigma \rangle^{(i)}}$. 

Now if $l_i>k_i$, by  Lemma \ref{lemma:externalbounded} there exists $M \in \N$ such that $C_i$ is $M$-bounded. 
Partitioning $S$ if necessary (and using Remark \ref{rem:preservation}), we may assume that for all $s \in S$, the interval $(\alpha_i(s), \beta_i(s))$ contains exactly $M'$ elements for some $M'\leqslant M$. 
Define functions $\delta_1 < \ldots < \delta_{M'}$, such that for each $s \in S$, $(\alpha_i(s), \beta_i(s)) =\{\delta_1(s), \ldots,\delta_{M'}(s)\}$ . 
Let $\mathcal{A}'$ be the cell array one obtains by applying repartitioning (a) of Lemma-Definition \ref{lem-def:repartitioning} simultaneously to all cell conditions parallel to $C_i$, with respect to the functions $\delta_i$. That is, $\mathcal{A}'$ is obtained from $\mathcal{A}$ by replacing the cell condition $C_i$ (and each cell condition parallel to $C_i$) by $M'$ small cell conditions (and adjusting $\Sigma$ accordingly). 

Note that $\mathcal{A}'$ still satisfies all properties of regularity except possibly (R5), but by Lemma \ref{lem:uniform_tree_structure} and Remark \ref{rem:preservation}, there exists a definable partition of $S$ into sets $S_j$ such that  each array $\mathcal{A}'_{|S_j}$ is regular. Moreover, each such array has at least one large cell condition less than the original cell array $\mathcal{A}$. Iterating the process for the remaining large cell conditions on each $\mathcal{A}'_{|S_j}$ completes the proof. 
\end{proof}

\subsection{Dealing with the remaining small cell arrays}

Let us now have a closer look at the remaining small cell arrays, and how their structure can be simplified. 
\\\\ 
We will do some normalizations first, to ensure that small cell conditions only differ in their height functions $\gamma(s)$. 
These normalizations will not change the actual cells that partition $\mathcal{A}(K)$, in the sense that, if $C$ was a cell condition from $\mathcal{A}$, and $\sigma$ a corresponding potential center, then if the normalization replaces $C$ by $C'$, there will exist a corresponding center $\sigma'$ such that $C^{\sigma} = (C')^{\sigma'}$. In particular, the original cell condition $C$ will be replaced by a condition $C'$ in which $\ac_m(t - \sigma'(s))$ will always be equal to 1. 

Unfortunately, it is not obvious whether the normalization procedure described in Lemma \ref{lemma:lambda} does preserve all properties of regular cell arrays. The definition below (of small regular multi-cells) lists those properties that will still be relevant for subsequent proofs. Other properties may or may not be preserved, but we will pay no further attention to them. 

%
%

\begin{definition} \label{def:smallregmulti} 
A multi-cell $\mathcal{A} = (\{C_{\gamma_j}\}_{1 \leqslant j \leqslant r}, \Sigma)$ is called a \emph{small regular multi-cell} if the following properties hold:
\begin{itemize}
\item[(S1)] All cell conditions $C_{\gamma_j}$ are small cell conditions of the form
\[ \ord(t-\sigma(s)) = \gamma_j(s) \wedge \acm(t-\sigma(s)) \equiv 1 \mod \pi^m,\]
for some $m\in \N$ independent of $j$. Also, for all $s\in S$ it holds that \[\gamma_1(s) < \ldots < \gamma_r(s).\]
\item[(S2)] Each $C_{\gamma_j}^{\Sigma^{(j)}}$ is a clustered cell.
\item[(S3)] For any $1 \leqslant i,j \leqslant r$, and any $\sigma_i \in \Sigma^{(i)}, \sigma_j \in \Sigma^{(j)}$, it holds that $\ord \sigma_i(s) = \ord \sigma_j(s)$ for all $s\in S$. 
\item[(S4)] If $C_{\gamma_i}$ and $C_{\gamma_j}$ are copies of the same cell condition, then $\Sigma^{(i)} = \Sigma^{(j)}$.
\item[(S5)]  Each clustered cell $C_{\gamma_j}^{\Sigma^{(j)}}$ has uniform tree structure.  \qedhere
\end{itemize}
\end{definition}
The listed conditions correspond to condition (i) and (ii) in the definition of cell array, and conditions (R1)-(R5) in the definition of regularity, specialized to the case where all cell conditions have the form specified in the above definition. Condition (R6) is no longer relevant since all cell conditions are assumed to be small. Note that by condition (S4) we can use the condensed notation that we introduced at the beginning of the section and write small regular multi-cells in the form $(\{C_{\gamma_j}^{\langle k_j \rangle}\}_{1\leqslant j \leqslant r}, \langle \Sigma \rangle)$.
%
%
%
\\\\
 In the proof of Lemma \ref{lemma:lambda} below, we will show how to transform regular cell arrays with only small cell conditions into small regular multi-cells.
\begin{lemma} \label{lemma:lambda}
Let $\mathcal{A}$ 
be a regular cell array, where all cell conditions are small. 
There exists a finite partition of $\mathcal{A}$ 
into small regular multi-cells $\mathcal{B}_i$.

\end{lemma}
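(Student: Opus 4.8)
\textbf{Proof proposal for Lemma \ref{lemma:lambda}.}

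The plan is to start with a regular cell array $\mathcal{A}=(\{C_i\}_i,\Sigma)$ all of whose cell conditions are small, and to normalize each small cell condition $C_i$ so that it acquires the shape demanded in (S1), namely $\ord(t-\sigma(s))=\gamma_i(s)\wedge \acm(t-\sigma(s))\equiv 1$, while preserving the actual cells in the decomposition. Recall that, by definition of a small cell condition, there is a definable function $\gamma_i:S\to\Gamma_K$ such that for every potential center $\sigma$ of $C_i^{\Sigma^{(i)}}$, the fiber $C_i^{\sigma(s)}=\{t\mid \ord(t-\sigma(s))=\gamma_i(s)\wedge t-\sigma(s)\in\lambda_iQ_{n_i,m_i}\}$. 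Since $\ord(t-\sigma(s))$ is pinned to the single value $\gamma_i(s)$, the congruence $\ord(t-\sigma(s))\equiv 0\bmod n_i$ hidden in $Q_{n_i,m_i}$ is either always satisfied or always fails on the relevant range; after partitioning $S$ we may assume it is always satisfied, and then the $n_i$-part of $Q_{n_i,m_i}$ can be dropped, leaving only the angular-component condition $\acm[m_i](t-\sigma(s))=\acm[m_i](\lambda_i)$. First I would use a finite partition of $S$ and Lemma \ref{lem:uniform_tree_structure} / Remark \ref{rem:preservation} to arrange that the ``vacuous or impossible'' dichotomy above holds uniformly and that uniform tree structure (S5) is retained.

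The key normalization step is to absorb the angular component of $\lambda_i$ into the center. Fix a component $C_i$ and a potential center $\sigma_i:S\to K$ of $C_i^{\Sigma^{(i)}}$. Writing $t-\sigma_i(s)=(t-\sigma_i'(s))+(\sigma_i'(s)-\sigma_i(s))$, one checks that if $\sigma_i'(s)$ is chosen with $\ord(\sigma_i'(s)-\sigma_i(s))=\gamma_i(s)$ and $\acm[m_i](\sigma_i'(s)-\sigma_i(s))$ equal to the appropriate unit, then $\acm[m_i](t-\sigma_i'(s))=1$ precisely on the leaf $C_i^{\sigma_i(s)}$; more carefully, since $\gamma_i(s)$ is the unique valuation attained, the shift by an element of valuation exactly $\gamma_i(s)$ permutes the residues $\acm[m_i]$ among the $q_K^{m_i-?}$ subballs of the ball of radius $\gamma_i(s)$ around $\sigma_i(s)$, and we can select the subball making the angular component trivial. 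Concretely, I would define $\Sigma_i':=\{(s,c)\in S\times K\mid \exists c'\in\Sigma^{(i)}_s:\ \ord(c-c')=\gamma_i(s)\wedge\acm[m_i]\big((c-c')\big)=\acm[m_i](\lambda_i)\}$, so that $C_{\gamma_i}^{\Sigma_i'}$ (with $C_{\gamma_i}$ now the normalized small condition) defines the same union of leaves as $C_i^{\Sigma^{(i)}}$; then propagate this replacement coordinatewise to $\Sigma$, exactly as in the repartitioning arguments of Lemma-Definition \ref{lem-def:repartitioning}, to obtain a multi-cell $\mathcal{A}'$ with the same induced set $X$ and with all the cells $C_i^{\sigma_i}$ matched by cells $(C_{\gamma_i})^{\sigma_i'}$. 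One must check that distinctness/disjointness of potential cells (condition (ii) of Definition \ref{def:multi-cell}), conditions (S2), (S3), (S4) and the uniform tree structure (S5) survive this substitution; (S3) is immediate since $\ord c=\ord c'$ as $\ord(c-c')=\gamma_i(s)>$ (the common value of) $\ord c$, using Remark \ref{rem:order}, and (S4) is arranged afterward by taking unions of the relevant $\Sigma^{(i)}$ as in the proof of Lemma \ref{lem:regular1}.

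Having normalized every component, I would finish by a finite partition of $S$ so that the (now only $\gamma_i$-indexed) height functions are linearly ordered, $\gamma_1(s)<\cdots<\gamma_r(s)$ for all $s$ in each piece, renaming cell conditions accordingly; this is legitimate since there are only finitely many components, hence finitely many orderings, and by Remark \ref{rem:preservation} each property is inherited by the restrictions. Equal normalized cell conditions are then merged via the (S4) identification, and a last application of Lemma \ref{lem:uniform_tree_structure} restores (S5) on each piece. The main obstacle I anticipate is the bookkeeping in the angular-component normalization: one has to be careful that shifting the center by an element of valuation exactly $\gamma_i(s)$ does not disturb which ball is the $(C,\Sigma_s)$-equivalence class (the radius $\rho_{\max}(s)+m_i$ ball), and that the new set $\Sigma_i'$ is still a union of balls of this radius satisfying condition (4) of Definition \ref{def:clustered_cell}; this requires checking that $\gamma_i(s)\leqslant\rho_{\max}(s)+m_i$ is handled correctly (it is, since for a small cell $\rho_{\max}=\gamma_i-m_i$ up to the relevant offset) so that equivalence classes are neither split nor merged by the shift. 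Once that compatibility is verified, everything else is a routine transcription of the repartitioning machinery already developed.
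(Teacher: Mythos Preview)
Your approach is essentially the paper's: drop the $n$-congruence after a finite partition of $S$, translate the centers so that the angular component becomes $1$, propagate to a new $\Sigma$, then restore (S4) by taking unions as in Lemma~\ref{lem:regular1} and (S5) by Lemma~\ref{lem:uniform_tree_structure}, and finally partition $S$ to linearly order the height functions.

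The one genuine slip is in your concrete translation set
\[
\Sigma_i'=\{(s,c)\mid \exists c'\in\Sigma^{(i)}_s:\ \ord(c-c')=\gamma_i(s)\wedge\acm(c-c')=\acm(\lambda_i)\}.
\]
With this choice of shift $b=c-c'$ one has $\ord(b)=\ord(t-c')$ and $\ac_1(b)=\ac_1(t-c')$, so the difference $(t-c')-b$ has valuation \emph{strictly larger} than $\gamma_i(s)$, i.e.\ $\ord(t-c)>\gamma_i(s)$, which destroys the desired normalized condition $\ord(t-c)=\gamma_i(s)$. The correct shift has $\acm(b)=\acm(\lambda_i-1)$ (so that $\acm(t-c')-\acm(b)=1$ when $\ac_1(\lambda_i)\neq 1$), and when $\ac_1(\lambda_i)=1$ one must instead shift by an element of valuation $\gamma_i(s)+r$ with $r=\ord(\lambda_i-1)\in(0,m)$. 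The paper packages this case distinction into an auxiliary function $\Lambda(\lambda)$ and a translation set $T^{(\gamma,\lambda)}$ whose elements have valuation $\gamma(s)+\ord(\lambda-1)$; your anticipated ``main obstacle'' is exactly this computation, and once the shift is corrected the rest of your outline goes through as written. Note also that since the shift always has valuation at least $\gamma_i(s)$, each equivalence-class ball of radius $\gamma_i(s)+m$ is carried to a ball of the same radius and its elements keep the same valuation, so (S2), (S3) and condition~(4) of Definition~\ref{def:clustered_cell} are indeed preserved.
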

%

\begin{proof} Given a small cell condition, we may as well assume that it has the form $C_{\gamma, \lambda}$, where
\[C_{\gamma, \lambda}^{\sigma} := \{(s,t) \in S \times K \mid \ord (t- \sigma(s)) = \gamma(s) \wedge \acm(t-\sigma(s)) = \acm(\lambda)\},\]
 and $\lambda \in K$ 
 with $\ord\, \lambda =0$. Indeed, the condition that $\ord (t-\sigma(s)) \equiv k \mod n$ can in this case be expressed as a condition on $\gamma(s)$, and thus on $S$. Hence, after a finite partitioning of $S$, this last condition is either obvious, or the set is empty. 
\\\\
 Now let $\mathcal{A} =(\{C_{\gamma, \lambda}\}_{\gamma, \lambda}, \Sigma)$ be a regular cell array where each cell condition has the form described above. We will show how to define small regular multicells $\mathcal{B}_k =(\{C_{\gamma_i}^{\langle k_{i} \rangle}\}_{i}, \Sigma_k)$ such that the sets $\mathcal{B}_k(K)$ form a partition of $\mathcal{A}(K) =:X$.
  
  Fix a cell condition $C_{\gamma,\lambda}$ from the description of the array, and write $\Sigma^{(\gamma, \lambda)}$ for its set of potential centers. 
  Put $r:= \ord(\lambda-1)$,
  and note that we may suppose that $r < m$, since otherwise we would have that $\ac_m(\lambda)=1$, in which case there is nothing to prove.
Now let $\delta_{\lambda}: \Gamma_K \to \Gamma_K$ be the function defined by
 \[\delta_\lambda(\gamma):= \gamma + r.\]
 Hence, $\delta_{\lambda}$ is simply the constant function  $\gamma \mapsto \gamma$ when $\ac_1(\lambda) \neq 1$. When $\ac_1(\lambda)=1$, we write $\lambda_1$ for the element of $\mathcal{O}_K\backslash \mathcal{M}_K$ satisfying  $\lambda = 1 + \pi^{r} \lambda_1$.  Define a function $\Lambda: K\to K$ by putting
\[ \Lambda(\lambda) := \left\{ \begin{array}{ll}
 \lambda-1 & \text{if \ } \ac_1(\lambda) \neq 1\\
  \lambda_1&  \text{otherwise,}
 \end{array}\right. .\] 
 Let $T^{(\gamma,\lambda)}$ be the following set:
\[T^{(\gamma,\lambda)} = \{(s,b) \in S\times K \mid \ord\, b = \delta_{\lambda}(\gamma(s)) \wedge \acm(b) = \acm(\Lambda(\lambda)) \}.\]
 We will write $\Sigma^{(\gamma, \lambda)} + T^{(\gamma,\lambda)}$ for the set $\{(s,b_1 + b_2) \mid  (s,b_1) \in \Sigma^{(\gamma, \lambda)} \ \wedge \ (s,b_2) \in T^{(\gamma,\lambda)} \}$, and for any section $\sigma$ of $\Sigma^{(\gamma, \lambda)}$, the set $\sigma + T^{(\gamma,\lambda)}$ is defined similarly. Our claim is now that 
 \begin{claim}$C_{\gamma, \lambda}^{\Sigma^{(\gamma, \lambda)}} = C_{\gamma, 1}^{\Sigma^{(\gamma, \lambda)} + T^{(\gamma,\lambda)}}$.
 \end{claim} For this it is sufficient to show that, for any section $\sigma$ of $\Sigma^{(\gamma, \lambda)}$, it holds that
 \begin{equation} \label{something}
  C_{\gamma,\lambda}^{\sigma} = C_{\gamma,1}^{\sigma + T^{(\gamma,\lambda)}}.
  \end{equation}
 Fix a section $\sigma$, and some $s \in S$. Choose $b \in K$ such that $(s,b) \in T^{(\gamma,\lambda)}$, and put $\zeta(s):= \sigma(s) + b$. We will prove the inclusion $\subset$ in \eqref{something}, by checking that $C_{\gamma, \lambda}^{\sigma(s)} \subset C_{\gamma, 1}^{\zeta(s)}$. Take $t \in C_{\gamma, \lambda}^{\sigma(s)}$. 
Then we have that
\[ \ord (t-\zeta(s)) = \ord(t-(\sigma(s)+b)) = \ord((t-\sigma(s)) -b) = \ord(t-\sigma(s)), \]
since either $\ord(t-\sigma(s)) = \ord\, b$ and  $\ac_1(t-\sigma(s)) \neq \ac_1(b)$, or else $\ord(t-\sigma(s)) < \ord\, b $ (when $\ac_1(\lambda) =1$). We also find that, if $\ac_1(\lambda) \neq 1$, then 
\[\acm(t-\zeta(s)) =
\acm(t-\sigma(s)) -\acm(b) = \acm(\lambda) - \acm(\Lambda(\lambda)) =1,\]
and \[\acm(t-\zeta(s))\equiv \acm(t-\sigma(s)) - \pi^r \acm(b) \equiv \lambda - \pi^r \lambda_1 \equiv 1 \mod \pi^m\] if  $\ac_1(\lambda) =1$.
This proves the inclusion $\subset$. The other inclusion can be proven in a similar way. 
\\\\
 In order to show that this procedure will give us a multi-cell with the desired properties, we need the following further observation. 

\begin{claim}
Every equivalence class-ball in the multi-ball $\Sigma^{(\gamma_i, \lambda_{ij})}$ is  translated to a ball with the same radius and with the same valuation. 
\end{claim}
Indeed, $\Sigma^{(\gamma_{i}, \lambda_{ij})}$ is a multi-ball where all the balls have radius $\gamma_i(s) +m$. The set $T^{(\gamma_i,\lambda_{ij})}$ is a multi-ball of order 1 for which the radius of the balls is at least $\gamma_i(s)+m$. This means that, if $B$ is one of the balls of radius $\gamma_i(s)+m$ from $\Sigma^{(\gamma_{i}, \lambda_{ij})}$, then $B+T^{(\gamma_i,\lambda_{ij})}_s$ will again be a ball of radius $\gamma_i(s)+m$. Hence, we are just translating $\Sigma^{(\gamma_{i}, \lambda_{ij})}_s$ without changing the tree structure. 
Furthermore, the elements of $T^{(\gamma_i,\lambda_{ij})}$ have valuation at least $\gamma_i(s)$, while the elements of $B$ have valuation at most $\gamma_i(s) - 1$ (by condition (2) from Definition \ref{def:clustered_cell}). Therefore, the translation will preserve the valuation of the elements of $\Sigma^{(\gamma_i, \lambda_{ij})}_s$. 
\\\\
The multi-cells $\mathcal{B}_k$ can now be defined as follows. For any fixed height function $\gamma_i$, we replace all cell conditions $C_{\gamma_i, \lambda_{ij}}$ by $C_{\gamma_i}:= C_{\gamma_i,1}$, so the multiplicity $k_{i}$ is given by the number of cell conditions of the form $C_{\gamma_i,\lambda_{ij}}$ occurring in the description of $\mathcal{A}$. 

A set $\widehat{\Sigma}$ can then be defined in the following way. Let $\gamma_1, \ldots, \gamma_l$ be the height functions occurring in the cell conditions $C_{\gamma_i, \lambda_{ij}}$ from $\mathcal{A}$. Put $c := (c_{1,1}, \ldots, c_{1,k_1}, \ldots, c_{l,1}, \ldots, c_{l,k_l})$, and write $\phi(s,c)$ for the formula expressing that the cell fibers $C_{\gamma_i}^{c_{i,j}}$ form a partition of $X_s$.
Then put
\[\widehat{\Sigma}:=\{(s,c) \in S \times K^{k_1 + \ldots+ k_l} \mid c_{i,j} \in \Sigma^{(\gamma_{i}, \lambda_{ij})} + T^{(\gamma_i,\lambda_{ij})} \wedge \phi(s,c)\}.\]
Now, the pair  $(\{C_{\gamma_i}^{\langle k_i\rangle}\}_i, \widehat{\Sigma})$  is a multi-cell defining the set $\mathcal{A}(K) = X$.  We leave it to the reader to check that condtions (S1)-(S3) from Definition \ref{def:smallregmulti}  follow from the above claim. 
%

However, note that projections  $\widehat{\Sigma}^{(i,j_1)}$ and $\widehat{\Sigma}^{(i,j_2)}$ need not be equal in general, even though the corresponding cell condition is $C_{\gamma_i}$ in both cases. Hence, we will need to repeat the procedure described in the proof of Lemma \ref{lem:regular1} to obtain condition (S4). Applying this procedure to $\widehat{\Sigma}$ will yield a set $\Sigma'$, and the reader can check that the multi-cell $(\{C_{\gamma_i}^{\langle k_i \rangle},\Sigma')$ still satisfies conditions (S1)-(S3). A further partitioning of $S$ into sets $S_k$,  like in Lemma \ref{lem:uniform_tree_structure}, will then yield small regular multi-cells $\mathcal{B}_k:= (\{C_{\gamma_i}^{\langle k_i \rangle}\}_i, \Sigma'_{|S_k})$, such that the sets $\mathcal{B}_k(K)$ partition $\mathcal{A}(K)$.
\end{proof}

\begin{lemma}\label{lemma:small-finite}
Let $\mathcal{A}
 = (\{C_{\gamma_i}^{\langle k_i \rangle}\}_{1 \leqslant i \leqslant l}, \langle\Sigma\rangle)$ be a regular array consisting only of small cells $C_{\gamma_i}$. There exists a definable, finite partition of $S$ into sets $S_j$, and, for each $\mathcal{A}_{|S_j}(K)$, a finite partition into regular clustered cells.
\end{lemma}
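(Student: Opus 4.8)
The plan is to reduce a small regular multi-cell to a bounded collection of clustered cells, using the fact (Proposition~\ref{prop:finitebranching}) that the number of equivalence classes of potential centers is uniformly bounded, say by $N$, together with the uniform tree structure (condition (S5)/(R5)). First I would apply Lemma~\ref{lemma:lambda} to replace $\mathcal{A}$ by finitely many small regular multi-cells $\mathcal{B}_k = (\{C_{\gamma_i}^{\langle k_i\rangle}\}_i, \langle\Sigma\rangle)$; it then suffices to treat one such $\mathcal{B}_k$ over one piece $S_k$ of $S$. By Proposition~\ref{prop:finitebranching} and (S5) there is a single finite tree type $T$ (with depth $D$ and at most $N$ leaves) describing every $T(\langle\Sigma\rangle^{(i)}_s)$ uniformly in $s$ and $i$; fix this once and for all.

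The key step is to split off the clustered cells $C_{\gamma_i}^{\langle\Sigma\rangle^{(i)}}$ one at a time. If for a cell condition $C_{\gamma_1}$ the order of the multi-ball $\langle\Sigma\rangle^{(1)}$ equals its multiplicity $k_1$, then Lemma~\ref{lemma:largecellsplit} (which is stated for regular cell arrays, and small regular multi-cells are regular cell arrays with all conditions small) lets us split $\mathcal{A}$ as $C_{1}^{\langle\Sigma\rangle^{(1)}} \cup (\{C_{\gamma_i}^{\langle k_i\rangle}\}_{2\leqslant i\leqslant l}, \langle\Sigma\rangle^{(2,\ldots,l)})$, and we iterate on the remaining array, which has fewer cell conditions. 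The problematic case is when the order $l_i$ of $\langle\Sigma\rangle^{(i)}$ strictly exceeds $k_i$ — i.e.\ $C_{\gamma_i}$ admits external exchange. Here I would argue as in the proof of Proposition~\ref{prop:large-small}, but now exploiting the fact that $C_{\gamma_i}$ is already \emph{small}: a small cell fiber consists of a single leaf, so the "size" of $C_{\gamma_i}$ is effectively $1$, and the counting argument behind Lemma~\ref{lemma:externalbounded} forces the number of cell conditions participating in the exchange to be bounded, hence the number of equivalence classes that can be "absorbed" by non-equivalent centers elsewhere is bounded by a constant depending only on $N$, $m$, $l$ and the tree type $T$. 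Partitioning $S$ according to the (finitely many, by boundedness) possible patterns of which centers admit external exchange, one reduces on each piece to the case $l_i = k_i$ for every $i$, and then Lemma~\ref{lemma:largecellsplit} applies repeatedly.

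Concretely, the inductive scheme is: on a piece of $S$, pick the cell condition $C_{\gamma_1}$ with the smallest height function $\gamma_1(s)$; partition $S$ so that the discrepancy $l_1 - k_1$ is constant; if it is zero, split off $C_1^{\langle\Sigma\rangle^{(1)}}$ by Lemma~\ref{lemma:largecellsplit} and recurse on the array of length $l-1$; if it is positive, use the external-exchange bound to further partition $S$ into finitely many pieces on each of which the set of centers of $C_{\gamma_1}$ admitting external exchange is pinned down (it is a definable, uniformly finite family, so this is a finite partition), remove those finitely many extra equivalence classes from $\langle\Sigma\rangle^{(1)}$ by peeling them off as additional small cells with their own (now definably chosen, via Lemma~\ref{lem:finiteskolem}) centers — each such peeled-off cell is a classical cell, hence a degenerate regular clustered cell of order $1$ — after which $\langle\Sigma\rangle^{(1)}$ has order exactly $k_1$ and we are back in the first case. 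At every stage Remark~\ref{rem:preservation} (and Lemma~\ref{lem:uniform_tree_structure}, Remark~\ref{rem:regularsplit}) guarantees that the smaller arrays remain regular after restriction and after splits by projection, restoring (R5)/(S5) by a further finite partition if a split by definable choice was used. The recursion terminates because the number of cell conditions strictly decreases.

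The main obstacle is the external-exchange case: one must verify that the bound coming from Lemma~\ref{lemma:externalbounded}-style counting is genuinely uniform in $s$ for \emph{small} cell conditions, and that the "extra" equivalence classes (those beyond the $k_i$ needed for a partition) form a definable family of uniformly bounded size, so that they can be peeled off using Denef's finiteness lemma (Lemma~\ref{lem:finiteskolem}). The subtlety is bookkeeping: after peeling off some classes from $\langle\Sigma\rangle^{(1)}$ one must re-examine whether the \emph{other} cell conditions $C_{\gamma_i}$, $i\geqslant 2$, still have their orders matched to their multiplicities, since removing centers from one component may change which centers elsewhere admit external exchange. This is handled by always working with the current array and re-partitioning $S$ as needed before each peeling step; uniform finiteness of the tree type $T$ keeps the total number of partitioning steps bounded, so the whole process halts after finitely many steps and yields the desired finite partition of each $\mathcal{A}_{|S_j}(K)$ into regular clustered cells.
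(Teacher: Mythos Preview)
Your overall strategy---reduce to small regular multi-cells via Lemma~\ref{lemma:lambda}, then split off one cell condition at a time using Lemma~\ref{lemma:largecellsplit} once the order of the multi-ball matches the multiplicity---is sound, and the paper proceeds similarly. The gap is in how you handle the external-exchange case.

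You propose to ``peel off'' the surplus equivalence classes from $\langle\Sigma\rangle^{(1)}$ as classical cells with definably chosen centers via Lemma~\ref{lem:finiteskolem}. This does not work. First, Lemma~\ref{lem:finiteskolem} requires the fibers to have fewer than $k$ \emph{elements}; the fibers $\langle\Sigma\rangle^{(1)}_s$ are unions of balls, hence infinite, so the lemma does not apply. You cannot pass to the set of equivalence classes either: those are balls, and selecting one definably is exactly the Skolem-function problem the paper is designed to avoid. Second, even the notion of ``extra'' class is ill-defined: the property ``admits external exchange'' is definable, but in general \emph{every} class in $\langle\Sigma\rangle^{(1)}_s$ may admit external exchange (different sections of $\Sigma$ omit different classes), so this condition does not single out a proper definable sub-multi-ball to remove. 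If you could definably isolate one equivalence class, you could iterate and turn the whole clustered cell into classical cells, collapsing the distinction the theorem is about.

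The paper's argument avoids any such selection. It works from the \emph{top} height $\gamma_l$ (smallest leaves) downward rather than from $\gamma_1$ upward. The key observation is that if a center $\sigma\in\langle\Sigma\rangle^{(\gamma_l)}$ admits external exchange, then the leaf $C_{\gamma_l}^{\sigma(s)}$ sits inside a ball of radius $\gamma_{l-1}(s)+m$ entirely contained in $X_s$; after partitioning $S$ by the (bounded) gap $\gamma_l(s)-\gamma_{l-1}(s)$ and the number $r$ of such balls, one \emph{adds} centers to $\langle\Sigma\rangle^{(\gamma_{l-1})}$ (the set $\widetilde{\Sigma}_{l-1}$, built definably from the existing centers at $\gamma_l$) and simultaneously lowers $k_l$ and raises $k_{l-1}$. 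After this reshuffling no external exchange remains at $\gamma_l$, so $C_{\gamma_l}$ splits off. No definable choice of a single class is ever made; the transformation moves whole families of centers between heights.
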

\begin{proof}Applying Lemma \ref{lemma:lambda}, we may as well assume that $\mathcal{A}$ is a small regular multi-cell. 
Let $\gamma_1(s)< \ldots< \gamma_l(s)$ be the height functions for the cell conditions in $\mathcal{A}$, and write $\Sigma^{(\gamma_i)}$ for the set of potential centers of the clustered cell associated to $C_{\gamma_i}$. Put $\mathcal{A}(K):=X$. We will first focus on the cells with the smallest leaves, i.e. the cells at height $\gamma_l(s)$. As discussed before, we may assume that $\Sigma^{(\gamma_l)}$ contains centers that admit external exchange.
\\\\
For a center $\sigma$ in $\Sigma^{(\gamma_l)}$ to admit external exchange, there must exist a center $\zeta$ for a lower level $\gamma_j$ (with $j < l$), such that $C_{\gamma_l}^{\sigma(s)} \subset C_{\gamma_j}^{\zeta(s)}$. 
%
Now consider a decomposition of $X_s$ that contains the potential cell $C_{\gamma_l}^{\sigma(s)}$ as one of its components. This decomposition cannot contain the ball $B:= C_{\gamma_j}^{\zeta(s)}$ as a single leaf at height $\gamma_{j}(s)$, nor as a subset of a leaf at a lower height $\gamma_{j'}$ (for $j'<j$). Indeed, the presence of the ball $C_{\gamma_l}^{\sigma(s)}$ means that such a decomposition could never be a partition. 


Hence, in order to represent the points of the ball $B$, we will need a union of smaller balls (small potential cell fibers of heights strictly bigger that $\gamma_j(s)$), where clearly the number of balls one can use is bounded by the sum of the multiplicities of the cell conditions $C_{\gamma_{j+1}}, \ldots, C_{\gamma_l}$. Note that this implies that, if there is exchange possible between two heights $\gamma_i(s)$ and $\gamma_j(s)$, then necessarily the distance $|\gamma_j(s) - \gamma_i(s)|$ is finite (as otherwise one would need infinitely many balls). Moreover, there exists a uniform upper bound for this distance (depending on the respective multiplicities of $C_{\gamma_i}$ and $C_{\gamma_j}$).
\\\\
Since we are working with a small regular multi-cell, the tree structure for each $\Sigma^{(\gamma_i)}_s$ is independent of $s$ and therefore the number of nonequivalent potential centers at each height is independent of $s$ as well. However, as the tree structure does not fix the distance between the height functions $\gamma_i(s)$, we still need to be a bit careful. 
\\
\\
What the above discussion shows is that, if a center $\sigma(s)$ in $\Sigma^{(\gamma_l)}_s$ admits external exchange, then this implies that $X_s$ must contain a ball $B'$ of radius $\gamma_{l-1}(s) +m$, such that $C_{\gamma_l}^{\sigma(s)} \subset B'$. We will now rewrite the array so that such balls $B'$ can be represented as small cells at height $\gamma_{l-1}(s)$. 

Note that the number of potential centers of $\Sigma^{(\gamma_l)}_s$ that are involved in this, will depend on the distance between $\gamma_l(s)$ and $\gamma_{l-1}(s)$, a number which may vary with $s$. Hence, in order to work uniformly, we will need to partition the set $S$. 
Put $n_k:= q_K^k$ and let $\phi_{k}(s)$ be the definable condition stating that
\[\phi_{k}(s):= n_k < k_l \wedge (\exists\ \sigma_1, \ldots, \sigma_{n_k} \in \Sigma^{(\gamma_l)}_s)[ \cup_{i=1}^{n_k} C_{\gamma_l} ^{\sigma_i(s)} \text{ is a ball of radius } \gamma_{l-1}(s)+m ] \]
Now partition $S$ into sets $S_{k}$ defined as
\[ S_{k}:= \{ s \in S \mid  |\gamma_l(s) - \gamma_{l-1}(s)| = k \text{ and } \phi_{k}(s) \text{ holds} \}.\]
Clearly, this gives a partition of $S$, since by assumption there is exchange between $C_{\gamma_l}$ and lower heights. Also, the partition must be finite 
since we had already remarked that there exists a uniform upper bound for $k$.

Each such set can then be further partitioned as a finite union of sets $S_{k,r}$, where $r$ is the number of disjoint balls of radius $\gamma_{l-1}(s) +m$ that can be formed for a given $s$ using leaves $C_{\gamma_l} ^{\sigma_i(s)}$. This number $r$ is finite since the number of non-equivalent potential centers is finite.
\\\\
 Now fix one such set $S_{k,r}$. The given partition of $S$ naturally induces a partition of $\mathcal{A}$ into small regular multi-cells $\mathcal{A}_{k,r}:=\mathcal{A}_{|S_{k,r}}$, with $X_{k,r}:= \mathcal{A}_{k,r}(K)$ (where all properties are preserved by Remark \ref{rem:preservation}). To unburden notation below, we will simply denote $\mathcal{A}_{k,r}$ as $(\{C_{\gamma_i}^{\langle k_i \rangle}\}_i, \langle\Sigma\rangle)$.

Because of the way $\mathcal{A}_{k,r}$ was defined, we know that there must exist $r$ disjoint sets, each consisting of $n_{k}$ non-equivalent centers $\{\sigma_1, \ldots \sigma_{n_{k}}\}$ in $\langle\Sigma\rangle^{(\gamma_l)}$, such that for each $s$, the union
\begin{equation}\label{union}\bigcup_{i=1}^{n_{k}} C_{\gamma_l}^{\sigma_i(s)}\end{equation} equals a single ball $B'(s)$ of radius $\gamma_{l-1}(s) +m$. Note that it is possible that $\langle\Sigma\rangle^{(\gamma_{l-1})}$ currently does not contain a center $\zeta'(s)$ such that $B'(s) = C_{\gamma_{l-1}}^{\zeta'(s)}$. However, it is possible to definably extend $\langle\Sigma\rangle^{(\gamma_{l-1})}$ to include such a center. Indeed, put
\[\widetilde{\Sigma}_{l-1}:= \{(s,\zeta(s)) \in S \times K \mid \exists \sigma_1(s), \ldots, \sigma_{n_k}(s) \in \Sigma^{(\gamma_k)}_s : C_{\gamma_{l-1}}^{\zeta(s)} = \bigcup_i C_{\gamma_l}^{\sigma_i(s)}\}.\] 
This gives us a set whose fibers consist of  centers $\zeta(s)$ such that $C_{\gamma_{l-1}}^{\zeta(s)}$ is equal to one of the balls $B'(s)$.
We will now replace $\mathcal{A}_{k,r}$ by 
 $\mathcal{A}_{k,r}':= (\{C_{\gamma_i}^{\langle k'_i\rangle}\}, \langle \Sigma' \rangle)$, where
\[ k'_i := \left\{ \begin{array}{lcl} k_i & & i < l-1,\\
k_i +r &\text{if}& i = l-1,\\
k_i - rn_k & &  i = l,\\
\end{array}\right.\] replacing cell conditions at height $\gamma_{l}$ by a concurrent number of cell conditions at height $\gamma_{l-1}$. The potential centers can be adjusted accordingly: if we put \[c := (c_{11} \ldots, c_{1k_1'},\dots, c_{l1}, \ldots, c_{lk_l'}),\] then $\Sigma'$ can be defined as $\Sigma' := \{(s,c)\in S_{k,r}\times K^{\sum k_i'} \mid \psi_{k,r}(s,c)\}$, where $\psi_{k,r}$ is the formula
\[\psi_{k,r}(s,c):= c_{ij} \in \langle \Sigma\rangle ^{(i)}_s \text{ for } i \neq l-1 \ \wedge \ c_{l-1,j} \in \langle \Sigma\rangle^{(l-1)}_s \cup (\widetilde{\Sigma}_{l-1})_s \ \wedge \ \bigcup_{i,j} C_{\gamma_i}^{c_{ij}} = (X_{k,r})_s.\]
It should be clear that $\mathcal{A}'_{k,r}$ still satisfies conditions (S1)-(S4), and that $\mathcal{A}_{k,r}(K) = \mathcal{A}'_{k,r}(K)$. It may be that (S5) no longer holds, but this can be remedied by a further partitioning of $S$ if necessary.
Moreover, we claim that after this transformation, there is no further exchange possible between cells $C_{\gamma_l}$ and cells at lower heights. The reason is simply that the condition for exchange is no longer satisfied, as the original leaves $C_{\gamma_l}^{\sigma}$ that were part of a bigger ball are now represented inside a bigger leaf at height $\gamma_{l-1}$. Hence, since there is no more exchange, the remaining cell conditions $C_{\gamma_l}$ can now be split off definably. 

Repeating the same procedure $l-2$ more times for the remaining small regular multi-cells will result in a union of regular clustered cells. 
\end{proof}

\section{A decomposition into regular clustered cells}\label{subsec:reduced}



We are now ready to state a full, detailed version of our cell decomposition theorem. We tried to make the statement reasonably self-contained.

\begin{theorem}[Clustered cell decomposition]\label{thm:celldecomposition}
Let $X \subseteq S \times K$ be a set definable in a $P$-minimal structure. Then there exist $n,m \in \N\backslash\{0\}$ and a finite partition of $X$ into definable sets $X_i \subseteq S_i \times K$ of the one of the following forms
\begin{itemize}
\item[(i)] Classical cells 
\[X_i= \{ (s,t) \in S_i \times K \mid \alpha_{i}(s) \ \square_1 \ \ord(t-c_{i}(x)) \ \square_2 \ \beta_{i}(s) \wedge t - c_{i}(s) \in \lambda_{i} Q_{n,m} \},\]
where $\alpha_{i}, \beta_{i}$ are definable functions $S_i \to \Gamma_K$, the squares $\square_1,\square_2$ may denote either $<$ or    $\emptyset$ (i.e. `no condition'), and $\lambda_{i} \in K$. The center $c_{i}: S_i \to K$ is a definable function (which may not be unique). 
\item[(ii)] Regular clustered cells $X_i=C_{i}^{\Sigma_{i}}$ of order $k_{i}$. \item[] Let $\sigma_1, \ldots, \sigma_{k_{i}}$ be (non-definable) sections of the definable multi-ball $\Sigma_{i} \subseteq S_i \times K$, such that for each $s \in S_i$, the set $\{\sigma_1(s), \ldots, \sigma_{k_{i}}(s)\}$ contains representatives of all $k_{i}$ disjoint balls covering $(\Sigma_{i})_s$. Then $X_{i}$ partitions as
\[X_{i} = C_{i}^{\sigma_1} \cup \ldots \cup C_{i}^{\sigma_{k_{i}}},\]
where each set $C_{i}^{\sigma_l}$ is of the form
\[ C_{i}^{\sigma_l} = \{(s,t) \in S_i \times K \mid \alpha_{i}(s) \ < \ \ord(t-\sigma_l(s)) \ < \ \beta_{i}(s) \wedge t-\sigma_l(s) \in \lambda_{i} Q_{n,m}\}.\]
Here $\alpha_{i}, \beta_{i}$ are definable functions $S_i \to \Gamma_K$, $\lambda_{i} \in K \backslash \{0\}$, and $\ord\, \alpha_{i}(s) \geqslant \ord \,\sigma_l(s)$ for all $s \in S_i$. Finally, we may suppose no section of $\Sigma_i$ is definable.  
\end{itemize}
\end{theorem}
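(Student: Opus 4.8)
The plan is to assemble Theorem~\ref{thm:celldecomposition} from the chain of reductions already established, so that the proof is essentially a bookkeeping argument combining the main results of Sections~\ref{sec:semi-alg}--\ref{sec:largesmall}. First I would invoke Theorem~\ref{thm:refinement} to partition $X$ into finitely many pieces, each of which is either a classical cell or a cell array. Classical cells already have exactly the shape demanded in part~(i) (after possibly rewriting the $Q_{n_i,m_i}$ using a common $Q_{n,m}$, which one gets by taking $n$ the lcm and $m$ the max over all the finitely many cells and cell conditions involved, splitting each $Q_{n_i,m_i}$ into cosets of $Q_{n,m}$ as in Lemma-Definition~\ref{lem-def:repartitioning}(b),(c)). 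So the work is entirely with the cell arrays.

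Next, for each cell array I would apply Proposition~\ref{prop:regular_cell_array} to partition it further into regular cell arrays, and then Proposition~\ref{prop:large-small} to partition each regular cell array into pieces that are either a regular clustered cell or a regular cell array containing only small cell conditions. The regular clustered cells produced this way already satisfy the description in part~(ii): condition~(1) of Definition~\ref{def:clustered_cell} gives $\square_1=\square_2=<$ and $\lambda_i\neq 0$; condition~(2) gives $\ord\,\sigma_l(s)\leqslant\alpha_i(s)$, hence $\ord\,\alpha_i(s)\geqslant\ord\,\sigma_l(s)$; Remark~\ref{rem:order} and Lemma~\ref{lemma:disjointness} (using regularity condition (R6)) give that the potential cells attached to non-equivalent centers are disjoint, so that $C_i^{\sigma_1}\cup\ldots\cup C_i^{\sigma_{k_i}}$ is a genuine partition; and Definition~\ref{def:regularcluster} tells us $\Sigma_i$ is a multi-ball whose $k_i$ balls are exactly the equivalence classes. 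For the leftover regular cell arrays with only small cell conditions, I would apply Lemma~\ref{lemma:small-finite} to partition $S$ and then each piece into regular clustered cells, which again land in part~(ii).

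The only remaining point is the final sentence of~(ii): ``we may suppose no section of $\Sigma_i$ is definable''. This is handled by a simple dichotomy applied to each regular clustered cell $C_i^{\Sigma_i}$: by Lemma~\ref{lem:finiteskolem} (Denef), if some fiber-wise finiteness held we could choose a definable section, but more to the point, \emph{if} $\Sigma_i$ happens to admit a definable section $\sigma$, then $C_i^{\Sigma_i}$ is in fact a classical cell $C_i^\sigma$ and we may list it under part~(i) instead. Thus, after re-sorting, every piece listed under~(ii) genuinely has no definable section. Finally, one passes to a common $n,m$ across \emph{all} the finitely many pieces (classical and clustered) by the same lcm/max argument, noting that rewriting a clustered cell's $Q_{n_i,m_i}$ via Lemma-Definition~\ref{lem-def:repartitioning}(b),(c) only refines the equivalence classes into smaller balls and preserves the multi-ball structure and all of regularity.

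The main obstacle in writing this up cleanly is not any single hard step --- each ingredient is already proved --- but rather the consistency of the normalizations: one must make sure that the passage to a uniform $Q_{n,m}$ is performed \emph{after} all the partitioning into clustered cells (or check it commutes with the earlier constructions), since enforcing a common $m$ changes the radius $\rho_{\max}(s)+m$ of the equivalence-class balls and hence the set $\Sigma_i$; and one must verify that subdividing $Q_{n_i,m_i}$-cells does not destroy conditions~(R1)--(R6) or the ``no definable section'' property. The cleanest route is to do the $Q_{n,m}$-unification first on $X$ as a whole (so Theorem~\ref{thm:refinement} and everything after it is applied to an $X$ already written with a fixed $Q_{n,m}$, tracing through that all the cited lemmas only ever take lcm's and max's of the \emph{existing} moduli), after which the statement of Theorem~\ref{thm:celldecomposition} follows by directly concatenating Theorem~\ref{thm:refinement}, Proposition~\ref{prop:regular_cell_array}, Proposition~\ref{prop:large-small} and Lemma~\ref{lemma:small-finite}, together with the classical-versus-clustered re-sorting described above.
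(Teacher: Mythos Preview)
Your proposal follows essentially the same route as the paper's proof: invoke Theorem~\ref{thm:refinement}, then Proposition~\ref{prop:regular_cell_array}, then Proposition~\ref{prop:large-small} and Lemma~\ref{lemma:small-finite}, and finally uniformize $n,m$ and deal with definable sections. The paper also handles the $Q_{n,m}$-uniformization exactly as you suggest, observing that the first step in the proof of Proposition~\ref{prop:regular_cell_array} already does this within each array and can be made uniform across all arrays in the partition, after which no later step changes $n$ or $m$.

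There is, however, one genuine slip in your last paragraph. You claim that if $\Sigma_i$ admits a definable section $\sigma$, then $C_i^{\Sigma_i}$ ``is in fact a classical cell $C_i^\sigma$'' and can simply be relisted under~(i). This is false whenever the order $k_i>1$: a section of the multi-ball $\Sigma_i$ selects a single point in each fiber $(\Sigma_i)_s$, hence lands in only one of the $k_i$ equivalence-class balls, and $C_i^\sigma$ is then just one of the $k_i$ disjoint potential cells whose union is $C_i^{\Sigma_i}$. The paper's fix is the obvious iterative one: use the splitting procedure of Definition~\ref{def:defsplit} to peel off the classical cell $C_i^\sigma$, which leaves a regular clustered cell of order $k_i-1$ (with set of centers obtained by removing the ball containing $\sigma(s)$ from each fiber); then repeat. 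At each stage the remainder is still a regular clustered cell, and the process terminates either when the order reaches zero or when no definable section remains. With this correction your argument is complete and matches the paper's.
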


\begin{proof}
By Theorem \ref{thm:refinement}, there is a partition of  $X$ into classical cells and cell arrays $(\{C_j \}_j, \Sigma)$. 
If different values of $m_i, n_i$ occur for different cell conditions in the partition, put $m:= \max_i\{m_i\}$ and $n:= \text{lcm}_i\{n_i\}$. The classical cells in the decomposition can be partitioned in a straightforward way to obtain cells described using the set $Q_{n,m}$.

By Proposition \ref{prop:finitebranching}, we know that there exists a uniform upper bound $N$ for the number of $(C_j, \Sigma^{(j)}_s)$-equivalence classes. This allows us to obtain Proposition \ref{prop:regular_cell_array}, where we show that any cell array can be partitioned as a finite union of regular cell arrays. Moreover, recall that the first step in this proof uniformizes the value of $n$ and $m$ within an array, and we can use the procedure described there to make sure that the same $n,m$ are used uniformly for all cell arrays in the partition of $X$. Later steps in the proof will never need to modify the values of $n$ and $m$ again.

 In Proposition \ref{prop:large-small} and Lemmas \ref{lemma:small-finite}, we show how to split a regular cell array into a finite union of regular clustered cells  of finite order. 
 If for one of the clustered cells in our partition, the corresponding set $\Sigma_i$ would admit a definable section, then the splitting procedure from Definition \ref{def:defsplit} can be used to partition off one or more classical cells, until no more definable sections remain. So we can indeed suppose that no definable sections exist.
\end{proof}

\subsection{Final remarks} \label{subsec:two-sorted}

While we have presented our cell decomposition theorem in a two-sorted context, allowing the variables in $S$ to be both $K$-variables and $\Gamma_K$-variables, it should be clear that Theorem \ref{thm:celldecomposition} can also be applied to one-sorted $P$-minimal structures. For instance, Mourgues' result (specifically the implication $(i)\to (ii)$) can easily be derived from it.

\begin{theorem}[Mourgues] Let $(K,\cL)$ be a (one-sorted) $P$-minimal field. Then the following are equivalent:
\begin{enumerate}
\item[(i)] $(K,\cL)$ has definable Skolem functions;
\item[(ii)] every definable set can be decomposed into a finite number of classical cells. 
\end{enumerate}
\end{theorem}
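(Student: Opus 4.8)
The plan is to prove only the implication $(i)\Rightarrow(ii)$, since the excerpt's final sentence restricts attention to this direction; the converse $(ii)\Rightarrow(i)$ is the classical easy direction (a definable choice function can be read off from the centers of classical $0$-cells, exactly as in Mourgues' original argument) and I would dispatch it in one sentence, or simply cite \cite{mou-09}. So assume $(K,\cL)$ is a one-sorted $P$-minimal field with definable Skolem functions, and let $X\subseteq S\times K$ be definable.

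First I would apply Theorem \ref{thm:celldecomposition} to $X$, obtaining a finite partition of $X$ into classical cells and regular clustered cells $X_i=C_i^{\Sigma_i}$ of order $k_i$. Classical cells are already of the desired form, so it suffices to show that each regular clustered cell can be further partitioned into finitely many classical cells. Fix such an $X_i=C_i^{\Sigma_i}$, where $\Sigma_i\subseteq S_i\times K$ is a multi-ball of order $k_i$: every fiber $(\Sigma_i)_s$ is a disjoint union of exactly $k_i$ balls, and these balls coincide with the $k_i$ distinct $(C_i,(\Sigma_i)_s)$-equivalence classes. The key point is that, because $(K,\cL)$ has definable Skolem functions, one can definably choose a point from each of these $k_i$ balls, uniformly in $s$. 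Concretely, the set
\[
\Sigma_i^\ast := \{(s,c,\zeta)\in S_i\times K\times K \mid c\in(\Sigma_i)_s \wedge \zeta\in(\Sigma_i)_s \wedge \lnot(\text{$c$ and $\zeta$ are $(C_i,(\Sigma_i)_s)$-equivalent})\}
\]
is definable, and one shows that the equivalence relation ``$c\sim\zeta$ iff they lie in the same ball'' is definable with exactly $k_i$ classes per fiber; this relation is itself just $\ord(c-\zeta)\geqslant \rho_{\max}(s)+m$ by condition (4) of Definition \ref{def:clustered_cell} and the definition of regular clustered cell. Since there are only $k_i$ classes, Lemma \ref{lem:finiteskolem} already gives, \emph{without} Skolem functions, a definable section picking one representative per class — but to split off all $k_i$ classes simultaneously one iterates: having chosen one definable section $\sigma_1$ (a definable point $\sigma_1(s)$ in one ball), remove that ball from $\Sigma_i$, obtaining a definable multi-ball of order $k_i-1$, and repeat. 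After $k_i$ steps one has definable sections $\sigma_1,\dots,\sigma_{k_i}:S_i\to K$ of $\Sigma_i$ such that $\{\sigma_1(s),\dots,\sigma_{k_i}(s)\}$ is a system of representatives of the $k_i$ equivalence classes for every $s$.

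Given these definable sections, the regularity of the clustered cell — specifically the disjointness statement recorded just after Definition \ref{def:regularcluster}, which itself comes from condition (R6) via Lemma \ref{lemma:disjointness} — yields that
\[
X_i = C_i^{\sigma_1}\cup C_i^{\sigma_2}\cup\cdots\cup C_i^{\sigma_{k_i}}
\]
is a partition into $k_i$ classical cells, since each $C_i^{\sigma_l}$ is a classical cell (its center $\sigma_l$ is definable) and the pieces are pairwise disjoint. Finally, uniformizing $n$ and $m$ across all pieces as in the proof of Theorem \ref{thm:celldecomposition} (repartitioning each classical cell using the common set $Q_{n,m}$, which only subdivides cells further into classical cells) gives the desired finite decomposition of $X$ into classical cells, establishing $(ii)$.

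\textbf{Main obstacle.} The only genuinely non-routine point is verifying that the equivalence-class structure of $(\Sigma_i)_s$ is captured by a definable equivalence relation with a \emph{uniformly bounded} (indeed constant) number of classes, so that Lemma \ref{lem:finiteskolem} — or, in the presence of Skolem functions, a direct choice — applies uniformly in $s$; this is exactly what regularity of the clustered cell (conditions (4) of Definition \ref{def:clustered_cell}, and (R5)--(R6)) was set up to guarantee, so once those are invoked the argument is short. One should also be mildly careful that the iterative removal of one ball at a time keeps the remaining set a definable multi-ball of the correct order, but this is immediate from the definability of the equivalence relation.
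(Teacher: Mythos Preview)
Your overall approach is correct and coincides with what the paper intends: the paper does not spell out a proof but simply states that the implication $(i)\Rightarrow(ii)$ ``can easily be derived'' from Theorem~\ref{thm:celldecomposition}. Your argument is exactly the intended derivation --- apply the clustered cell decomposition, then use the Skolem function hypothesis to pick a definable center from each of the $k_i$ balls in $(\Sigma_i)_s$, iterating $k_i$ times, and invoke the disjointness coming from regularity (Lemma~\ref{lemma:disjointness}) to see that the resulting classical cells partition $X_i$.

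There is, however, one genuine error in your aside invoking Lemma~\ref{lem:finiteskolem}. You claim that lemma already supplies a definable section ``\emph{without} Skolem functions'' because there are only $k_i$ equivalence classes. This is false: Lemma~\ref{lem:finiteskolem} requires each fiber to have fewer than $k$ \emph{elements}, not fewer than $k$ equivalence classes. The fibers $(\Sigma_i)_s$ are unions of $k_i$ balls, hence infinite, so the lemma does not apply. Indeed, if it did, every $P$-minimal structure would admit classical cell decomposition, contradicting the whole point of the paper (and the example in \cite{cubi-nguyen}). The hypothesis~$(i)$ is precisely what allows you to pick a definable point from each infinite ball, and it is genuinely needed at this step. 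Once you replace the appeal to Lemma~\ref{lem:finiteskolem} by a direct use of definable Skolem functions to obtain $\sigma_1$, the iteration and the rest of your argument go through unchanged.
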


Note that the above theorem is only relevant to the one-sorted case. The reason is that two-sorted $P$-minimal structures will never admit definable Skolem functions. Indeed, there cannot exist a definable section of the valuation map $\ord: K \to \Gamma_K$, since its image would be an infinite discrete set. The existence of such a set would imply that the structure is actually not $P$-minimal. 
\\\\
Since we are working with two sorts, two types of cell decompositions need to be considered, depending on the sort of the last variable. 
Our focus in the current paper is on definability for the field sort, and more specifically on definable sets $X \subset S \times K$ where the last variable is a $K$-variable. In fact, it would probably be more precise to call our main result a $K$-cell decomposition theorem, where a $K$-cell may either be a classical cell or a regular clustered cell.
Cell decomposition is significantly less complicated for sets $X \subseteq S \times \Gamma_K$, and the following $\Gamma$-cell decomposition was already obtained in \cite{cubi-leen-2015}:
\begin{theorem}[$\Gamma$-cell decomposition] \label{prop:prep}
Let $X \subseteq S\times \Gamma_K$ be definable in a $P$-minimal structure $(K,\Gamma_K)$. There exists a finite partition of $X$ in $\Gamma$-cells $B$ of the form
\[B= \left\{(s,\gamma)\in D\times \Gamma_K \left|\begin{array}{l} \alpha(s)\ \square_1 \ \gamma \ \square_2 \ \beta(s) \ \ \wedge \ \ 
\gamma \equiv k\mod n \end{array}\right\}\right.,\]
where $D$ is a definable subset of $S$, $\alpha_i, \beta_i$ are definable functions $D\to\Gamma_k$, $k, n\in \NN$ and the squares $\square_i$ may denote $<$ or $\emptyset$. 
\end{theorem}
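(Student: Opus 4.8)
The plan is to reduce the statement to Cluckers' cell decomposition for Presburger sets; the only real work lies in handling the field-sort parameters in $S$ uniformly.

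Recall that, by Cluckers' analysis of Presburger sets in $P$-minimal fields (\cite{clu-presb03}; this is the source of the already-used fact that definable subsets of $\Gamma_K$ are Presburger-definable), in any $P$-minimal structure every definable subset of a Cartesian power $\Gamma_K^m$ --- even when defined with parameters from the field sort --- is Presburger-definable using parameters from $\Gamma_K$ alone, and $\Gamma_K$ with its induced structure is a model of Presburger arithmetic. In particular, Presburger cell decomposition applies: any definable $Y \subseteq \Gamma_K^m \times \Gamma_K$ admits a finite partition into $\Gamma$-cells exactly of the form in the statement, with interval endpoints given by Presburger-definable functions and a single congruence condition modulo a fixed~$n$.

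To obtain the general case, write $S = K^a \times \Gamma_K^b$ and let $\phi(\bar x, \bar y, \gamma)$ define $X$, with $\bar x$ over $K^a$ and $\bar y$ over $\Gamma_K^b$. For each fixed $\bar x$ the fiber $X_{\bar x} \subseteq \Gamma_K^{b+1}$ is Presburger-definable, and the point is that this can be done uniformly: by a compactness argument --- Cluckers' theorem holds in every model elementarily equivalent to $(K,\Gamma_K;\Lm_2)$, so a failure of uniformity would produce, in some such model, a fiber that is Presburger-definable yet of unbounded complexity, which is absurd --- there is a uniform bound on the complexity of a Presburger formula defining $X_{\bar x}$. Consequently $K^a$ is covered by finitely many definable pieces, on each of which $X_{\bar x}$ is given by one fixed Presburger formula $\psi(\bar y, \gamma; \bar v)$ evaluated at parameters $\bar v = \bar v(\bar x) \in \Gamma_K^{|\bar v|}$; since Presburger arithmetic has definable Skolem functions, $\bar v(\bar x)$ may be chosen to depend definably on $\bar x$. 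Applying Presburger cell decomposition to $\psi(\bar y, \gamma; \bar v)$ (treating $\bar v$ as extra parameter-variables), substituting $\bar v = \bar v(\bar x)$, and partitioning $S$ further so that a single case of the decomposition applies over each piece, one writes $X$ as a finite union of sets of the required form. Here one also uses that a bounded definable subset of $\Gamma_K$ has a definable infimum and supremum, so the endpoint functions $\alpha, \beta$ are genuinely definable on the relevant pieces of $S$, while unbounded endpoints are recorded by taking the corresponding $\square_i$ to be $\emptyset$ and empty congruence classes are discarded.

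The delicate point is exactly this uniformity over the field-sort parameters $\bar x$, and it is where $P$-minimality is used in an essential way: the field sort induces no structure on $\Gamma_K$ beyond Presburger arithmetic, which is what allows the compactness argument --- together with definable Skolem functions for Presburger --- to go through. Everything else is routine bookkeeping with Presburger cells.
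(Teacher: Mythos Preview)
The paper does not give a proof of this statement; it is quoted from \cite{cubi-leen-2015} as background (and the paper notes that the version there is slightly stronger), so there is no proof here to compare against directly. Your outline is the natural route and is essentially how such results are established: stable embeddedness of $\Gamma_K$ (Cluckers' theorem that definable subsets of powers of $\Gamma_K$, even over field parameters, are Presburger) reduces the family $\{X_{\bar x}\}_{\bar x}$ to finitely many Presburger templates by compactness, and then Presburger cell decomposition applied to each template, with the extra parameter variables $\bar v$ treated as coordinates, does the rest. One point worth making explicit is the Skolem step: you need a section $\bar x\mapsto\bar v(\bar x)$ that is definable in the two-sorted language $\Lm_2$, not merely in Presburger, and invoking ``Presburger has definable Skolem functions'' does not literally give this since the input $\bar x$ lives in $K^a$. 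The fix is that definable selection from a nonempty definable subset of $\Gamma_K$ is already expressible in $\Lm_2$ (e.g.\ take the least nonnegative element if one exists, else the greatest negative one; this uses exactly the fact, cited elsewhere in the paper, that such sets are Presburger), and iterating coordinate by coordinate yields the required $\Lm_2$-definable section. With that clarification your argument is complete.
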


The version given in \cite{cubi-leen-2015} is actually slightly stronger than what is presented here. 
Additionally one has that,   given a definable function $f:X \subseteq S \times \Gamma_K \to \Gamma_K$, there exists a finite partition of $X$ into $\Gamma$-cells such that on each part,  
the function $f$ is linear in the last $\Gamma$-variable (see  \cite{cubi-leen-2015}).  

Readers familiar with other cell decomposition theorems 
may have noticed that in both Theorem \ref{thm:celldecomposition} and \ref{prop:prep}, no further conditions are imposed on the parameter set $S$ (besides definability). In many similar-style theorems, cells are defined inductively, in the sense that the set $S$ is required to be a cell as well, and similarly for its consecutive projections. We have not insisted on this, mainly because it would have required us to include more details on $\Gamma$-cell decomposition, which is not something which we wanted to focus on in this paper. We are however convinced that such an inductive cell decomposition theorem can be derived quite easily from Theorem \ref{thm:celldecomposition}, when taking into account both $K$-cell and $\Gamma$-cell decomposition. 
\\\\
Both $\Gamma$-cell and $K$-cell decompositions are important, and sometimes they need to be used simultaneously (see for instance Proposition 4.5 and Corollary 4.6 in \cite{cubi-leen-2015}). We intend to write a sequel to this paper, where some further applications of these theorems (related to $p$-adic integration) will be discussed. \\\\
To finish this article, we pose the following open question. 

\begin{question}
Can every regular clustered cell of finite order be decomposed into finitely many regular clustered cells of order 1?
\end{question}

A positive answer to this question would considerably simplify the cell decomposition theorem presented in this paper. 
Unfortunately, there are some indications that the answer should be no. We intend to discuss this issue in more detail in a note which we will publish separately.

\section{Acknowledgements}
The first author was supported by the European Research Council under the European Community's Seventh Framework Programme (FP7/2007-2013) with ERC Grant Agreement nr. 615722 (MOTMELSUM).
The second author was supported by ERC grant agreements nr. 615722
(MOTMELSUM) and nr. 637027 (TOSSIBERG). 
During the realization of this project, the third author was a postdoctoral fellow of the Fund for Scientific Research - Flanders (Belgium) (F.W.O.).

\bibliographystyle{plain}
\bibliography{bibliography}

\end{document}